\newtheorem{proposition}{Proposition}[section]
\newtheorem{theorem}[proposition]{Theorem}
\newtheorem{corollary}[proposition]{Corollary}
\newtheorem{lemma}[proposition]{Lemma}
\newtheorem{observation}[proposition]{Observation}
\theoremstyle{definition}
\newtheorem{definition}[proposition]{Definition}
\newtheorem{example}[proposition]{Example}
\theoremstyle{remark}
\newtheorem{remark}[proposition]{Remark}
\def\BLDA{\mathrm{BLDA}}
\def\BUDA{\mathrm{BUDA}}
\begin{document}

\title[One--sided Diophantine approximations]{One--sided Diophantine approximations}

\author{Jaroslav Han\v{c}l$^{1}$, Ond\v{r}ej Turek$^{1,2,3}$}

\address{$^1$ Department of Mathematics, Faculty of Science, University of Ostrava, 30.\ dubna 22, 701 03 Ostrava, Czech Republic}
\address{$^2$ Department of Theoretical Physics, Nuclear Physics Institute, Czech Academy of Sciences, 250 68 \v{R}e\v{z}, Czech Republic}
\address{$^3$ Laboratory for Unified Quantum Devices, Kochi University of Technology, 782-8502 Kochi, Japan}
\ead{jaroslav.hancl@osu.cz, ondrej.turek@osu.cz}
\vspace{10pt}
\begin{indented}
\item[]September 2018
\end{indented}

\begin{abstract}
The paper deals with best one--sided (lower or upper) Diophantine approximations of the $\ell$-th kind ($\ell\in\mathbb{N}$). We use the ordinary continued fraction expansions to formulate explicit criteria for a fraction $\frac{p}{q}\in\mathbb{Q}$ to be a best lower or upper Diophantine approximation of the $\ell$-th kind to a given $\alpha\in\mathbb{R}$. The sets of best lower and upper approximations are examined in terms of their cardinalities and metric properties. Applying our results in spectral analysis, we obtain an explanation for the rarity of so-called Bethe--Sommerfeld quantum graphs.
\end{abstract}

%
\vspace{2pc}
\noindent{\it Keywords}: Diophantine approximation, continued fraction, quantum graph, Bethe--Sommerfeld conjecture
%
%
%
%

\section{Introduction}

Diophantine approximations of real numbers is a classical concept in number theory. Its basic idea consists in finding rational numbers with the property of being closer to a given $\alpha\in\mathbb{R}$ than any other rational number with a smaller denominator, in the sense of the following definition.

\begin{definition}\label{Def.DA}
A number $\frac{p}{q}\in\mathbb{Q}$ with $p\in\mathbb{Z}$, $q\in\mathbb{N}$ is called a \emph{best Diophantine approximation of the first kind} to a given $\alpha\in\mathbb{R}$ if
\begin{equation}\label{Diophantine}
\left|\alpha-\frac{p}{q}\right|<\left|\alpha-\frac{p'}{q'}\right|
\end{equation}
holds for all $\frac{p'}{q'}\neq\frac{p}{q}$ such that $p'\in\mathbb{Z}$, $q'\in\mathbb{N}$ and $q'\leq q$. If the inequality (\ref{Diophantine}) is replaced with $|q\alpha-p|<|q'\alpha-p'|$, the corresponding fraction $\frac{p}{q}$ is called a \emph{best Diophantine approximation of the second kind} to the number $\alpha$.
\end{definition}

By their nature, Diophantine approximations are useful as good rational approximations of irrational numbers (recall ancient estimates $22/7$ and $355/113$ for $\pi$). They have also various other remarkable applications, for instance in solving Diophantine equations. Similarly, they are used in the theory of Lagrange numbers and Markoff chains~\cite{Cu89,Ha15,Ha16,PSZ16}, which plays an important role in computer science. 

Recent development in mathematical physics (more specifically, in spectral analysis of periodic quantum graphs \cite{ET17}) led to a need for a mathematical approach that can be referred to as ``best one--sided Diophantine approximations of the $\ell$-th kind'', where $\ell\in\mathbb{N}$. While best Diophantine approximations, introduced in Definition~\ref{Def.DA}, minimize the quantity $\left|\alpha-\frac{p}{q}\right|$ with respect to $q$ within the set of \emph{all} rational numbers $\frac{p}{q}$, best ``one--sided'' Diophantine approximations (of the first kind) aim at minimizing that quantity within the subset of rational numbers with property $\frac{p}{q}\leq\alpha$, or $\frac{p}{q}\geq\alpha$. Let us call such fractions best \emph{lower} Diophantine approximations and best \emph{upper} Diophantine approximations, respectively.

The study of best one--sided Diophantine approximations is related to the theory of asymmetric Diophantine approximations and their precision, which began to develop in the 20th century. Segre~\cite{Se45} demonstrated that each irrational number has infinitely many rational approximations lying within certain asymmetric bounds. Robinson~\cite{Ro47} used continued fractions to provide an alternative proof of Segre's theorem. Another and very short proof was later published by Eggan and Niven~\cite{EN61}. Then Finkelshtein~\cite{Fi93} studied best upper Diophantine approximations of the 2nd kind. He found their characterization in terms of so-called reduced regular continued fractions, the formalism that is described in detail in Perron's book~\cite{Pe1913} and a paper by Zurl~\cite{Zu35}.

It is likely that problems whose solutions rely on the idea of best lower and upper Diophantine approximations of the $\ell$-th kind will re-emerge in physics again in the future, and probably many times. The aim of this paper is thus to establish a relevant theory that could be used in future applications. However, our results are interesting from a purely mathematical point of view as well, as they represent a counterpart to the classical knowledge of standard Diophantine approximations.

Let us emphasize that the sets of best lower and upper Diophantine approximations to a given $\alpha$ cannot be obtained in any simple manner from the set of all best Diophantine approximations given by Definition~\ref{Def.DA}. Indeed, there exist rational numbers that are best lower or best upper Diophantine approximation to $\alpha$, but they do not obey Definition~\ref{Def.DA} (cf.~Example~\ref{Ex.srovnani}). Therefore, the sets of best lower and upper Diophantine approximations need to be constructed anew.

The paper is organized as follows. Sections~\ref{Prelim.onesided} and~\ref{Sect.CF} recall basic facts about Diophantine approximations and continued fractions. In particular, we introduce the notions of best lower and upper Diophantine approximations of the $\ell$-th kind, and derive their elementary properties. Section~\ref{Section:1st,2nd} presents a detailed description of the sets of best lower and upper Diophantine approximations of the first and second kind. In Section~\ref{Section:3rd}, we study best lower and upper aproximations of the third kind. A particular attention is then paid to quadratic irrational numbers (Section~\ref{Sect.quadratic}). Section~\ref{Section:4th} is devoted to best lower and upper approximations of the $\ell$-th kind for $\ell\geq4$. In Section~\ref{Application} we introduce a spectral problem in quantum mechanics that motivates and uses the developed theory. The paper is concluded with a short summary and outlook (Section~\ref{Conclusions}).

Throughout the paper, we use the standard symbols $\mathbb{N}$, $\mathbb{Z}$, $\mathbb{Q}$ and $\mathbb{R}$ for the sets of positive integers, integers, rational numbers and real numbers, respectively. The symbol $\mathbb{N}_0$ denotes the set of nonnegative integers.

\section{Double--sided and one--sided best Diophantine approximations}\label{Prelim.onesided}

Before proceeding to the central notion of this paper (Definition~\ref{Def. rth}), we formulate a natural extension of Definition~\ref{Def.DA}.
\begin{definition}\label{Def. rth abs}
Let $\alpha\in\mathbb{R}$, $\ell\in\mathbb{N}$ and $\frac{p}{q}\in\mathbb{Q}$ for $p\in\mathbb{Z}$, $q\in\mathbb{N}$. We call the number $\frac{p}{q}$ a \emph{best Diophantine approximation of the $\ell$-th kind} to $\alpha$ if
\begin{equation}\label{rth abs}
q^{\ell-1}\left|\alpha-\frac{p}{q}\right|<(q')^{\ell-1}\left|\alpha-\frac{p'}{q'}\right|
\end{equation}
for all $\frac{p'}{q'}\neq\frac{p}{q}$, $p'\in\mathbb{Z}$, $q'\in\mathbb{N}$ and $q'\leq q$.
\end{definition}

Definition~\ref{Def. rth abs} serves as our starting point for introducing one--sided best Diophantine approximations of the $\ell$-th kind. A special case of Definition~\ref{Def. rth} for $\ell=3$ appeared for the first time in~\cite{ET17}; here we consider a general $\ell\in\mathbb{N}$.

\begin{definition}\label{Def. rth}
Let $\alpha\in\mathbb{R}$, $\ell\in\mathbb{N}$ and $\frac{p}{q}\in\mathbb{Q}$ for $p\in\mathbb{Z}$, $q\in\mathbb{N}$. We say that 
\begin{itemize}
\item $\frac{p}{q}$ is a \emph{best lower Diophantine approximation of the $\ell$-th kind} to $\alpha$ if
\begin{equation}\label{rth below}
0\leq q^{\ell-1}\left(\alpha-\frac{p}{q}\right)<(q')^{\ell-1}\left(\alpha-\frac{p'}{q'}\right)
\end{equation}
for all $\frac{p'}{q'}\leq\alpha$ such that $\frac{p'}{q'}\neq\frac{p}{q}$, $p'\in\mathbb{Z}$, $q'\in\mathbb{N}$ and $q'\leq q$.
\item $\frac{p}{q}$ is a \emph{best upper Diophantine approximation of the $\ell$-th kind} to $\alpha$ if
\begin{equation}\label{rth above}
0\leq q^{\ell-1}\left(\frac{p}{q}-\alpha\right)<(q')^{\ell-1}\left(\frac{p'}{q'}-\alpha\right)
\end{equation}
for all $\frac{p'}{q'}\geq\alpha$ such that $\frac{p'}{q'}\neq\frac{p}{q}$, $p'\in\mathbb{Z}$, $q'\in\mathbb{N}$ and $q'\leq q$.
\end{itemize}
\end{definition}

We immediately have the following observation.

\begin{observation}\label{floorceil}
If $\frac{p}{q}$ is a best lower Diophantine approximation of the $\ell$-th kind to $\alpha$, then $p=\lfloor q\alpha\rfloor$. If $\frac{p}{q}$ is a best upper Diophantine approximation of the $\ell$-th kind to $\alpha$, then $p=\lceil q\alpha\rceil$.
\end{observation}

It follows easily from Definition~\ref{Def. rth} that for any $\alpha\in\mathbb{R}$, a fraction $\frac{p}{q}$ is a best lower Diophantine approximation of the $\ell$-th kind to $\alpha$ if and only if $\frac{-p}{q}$ is a best upper approximation of the $\ell$-th kind to $-\alpha$. Therefore, in the rest of the paper we can assume $\alpha\geq0$ without loss of generality.

For the sake of convenience, from now on we will usually drop the adjective ``Diophantine'' in the term ``Diophantine approximation'', and mostly use the following abbreviations:
\begin{itemize}
\item $\BLDA(\ell)$ for ``best lower Diophantine approximation of the $\ell$-th kind'';
\item $\BUDA(\ell)$ for ``best upper Diophantine approximation of the $\ell$-th kind''.
\end{itemize}

Since Definition~\ref{Def. rth} has weaker requirements than Definition~\ref{Def. rth abs}, we obviously have:
\begin{observation}\label{Obs.abs=>one}
If $\frac{p}{q}$ is a best approximation of the $\ell$-th kind to $\alpha$, then $\frac{p}{q}$ is a $\BLDA(\ell)$ or a $\BUDA(\ell)$ to $\alpha$.
\end{observation}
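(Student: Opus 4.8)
The plan is to exploit the fact that the family of competing fractions in Definition~\ref{Def. rth} is a \emph{subset} of the family appearing in Definition~\ref{Def. rth abs}, so that the stronger inequality~(\ref{rth abs}) collapses onto one of the two one--sided inequalities once the absolute values are resolved. Since every real number satisfies either $\frac{p}{q}\leq\alpha$ or $\frac{p}{q}\geq\alpha$, I would split the argument into these two cases and treat them symmetrically.

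In the case $\frac{p}{q}\leq\alpha$, I would verify that $\frac{p}{q}$ is a $\BLDA(\ell)$. The left--hand inequality $0\leq q^{\ell-1}\left(\alpha-\frac{p}{q}\right)$ is immediate from $\frac{p}{q}\leq\alpha$ together with $q\in\mathbb{N}$. For the strict inequality, I would restrict the quantifier in~(\ref{rth abs}) to those competitors with $\frac{p'}{q'}\leq\alpha$, $\frac{p'}{q'}\neq\frac{p}{q}$ and $q'\leq q$. For any such competitor both $\alpha-\frac{p}{q}$ and $\alpha-\frac{p'}{q'}$ are nonnegative, hence $\left|\alpha-\frac{p}{q}\right|=\alpha-\frac{p}{q}$ and $\left|\alpha-\frac{p'}{q'}\right|=\alpha-\frac{p'}{q'}$, so that~(\ref{rth abs}) turns verbatim into~(\ref{rth below}). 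The case $\frac{p}{q}\geq\alpha$ is handled the same way: here both $\frac{p}{q}-\alpha$ and $\frac{p'}{q'}-\alpha$ are nonnegative for every competitor $\frac{p'}{q'}\geq\alpha$, so~(\ref{rth abs}) reduces to~(\ref{rth above}) and $\frac{p}{q}$ is a $\BUDA(\ell)$.

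There is essentially no obstacle to overcome: the entire content is that passing from Definition~\ref{Def. rth abs} to Definition~\ref{Def. rth} simultaneously shrinks the set of competing fractions and removes the absolute values in a sign--consistent manner, which is precisely why the implication is asserted to hold ``obviously.'' The only point worth an explicit remark is the boundary situation $\frac{p}{q}=\alpha$ (possible only for $\alpha\in\mathbb{Q}$), in which $\frac{p}{q}$ lies on both sides of $\alpha$ and therefore qualifies simultaneously as a $\BLDA(\ell)$ and a $\BUDA(\ell)$; this is consistent with the inclusive ``or'' in the statement.
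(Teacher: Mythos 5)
Your proof is correct and takes essentially the same route as the paper, which states the observation without a written proof on the grounds that Definition~\ref{Def. rth} imposes weaker requirements than Definition~\ref{Def. rth abs} --- precisely the combination you spell out of shrinking the competitor set to one side of $\alpha$ and resolving the absolute values sign-consistently. Your explicit note on the boundary case $\frac{p}{q}=\alpha$, where the fraction qualifies as both a $\BLDA(\ell)$ and a $\BUDA(\ell)$, is a correct and harmless addition.
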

We emphasize, however, that the converse statement is not true. A $\BLDA(\ell)$ or a $\BUDA(\ell)$ to $\alpha$ may not obey Definition~\ref{Def. rth abs}, as we will see in Example~\ref{Ex.srovnani}.

\begin{observation}\label{Obs.l'<l}
If $\frac{p}{q}$ is a best lower (upper) approximation of the $\ell$-th kind to $\alpha$, then $\frac{p}{q}$ is a best lower (respectively, upper) approximation of the $\ell'$-th kind to $\alpha$ for all $\ell'<\ell$.
\end{observation}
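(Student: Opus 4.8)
The plan is to show directly that the defining inequality (\ref{rth below}) for the $\ell$-th kind already forces the corresponding inequality for every smaller $\ell'$, using only that each competing denominator satisfies $q'\le q$ together with $\ell'-1<\ell-1$. I treat the lower case; the upper case is completely analogous, with $\frac{p}{q}-\alpha$ and $\frac{p'}{q'}-\alpha$ replacing the quantities below and with (\ref{rth above}) in place of (\ref{rth below}).

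First I would note that the set of admissible competitors is literally the same for both kinds: in each case $\frac{p'}{q'}$ ranges over fractions with $\frac{p'}{q'}\le\alpha$, $\frac{p'}{q'}\neq\frac{p}{q}$ and $q'\le q$. Hence it suffices to fix one such competitor and verify (\ref{rth below}) with $\ell$ replaced by $\ell'$. Writing $A:=\alpha-\frac{p}{q}$ and $B:=\alpha-\frac{p'}{q'}$, the hypotheses supply $A\ge0$ (since $q^{\ell-1}>0$ and $0\le q^{\ell-1}A$), $B\ge0$ (since $\frac{p'}{q'}\le\alpha$), and the strict inequality $q^{\ell-1}A<(q')^{\ell-1}B$. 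I must then produce $q^{\ell'-1}A<(q')^{\ell'-1}B$; the nonnegativity part $0\le q^{\ell'-1}A$ of (\ref{rth below}) is immediate from $A\ge0$.

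The key step is a monotonicity estimate on the powers of the denominators. Since $1\le q'\le q$ and $\ell-\ell'\ge1$, we have $(q')^{\ell-\ell'}\le q^{\ell-\ell'}$, which is equivalent to $q^{\ell'-1}\le q^{\ell-1}/(q')^{\ell-\ell'}$. Multiplying by $A\ge0$ and then invoking the given strict inequality (after dividing it through by the positive number $(q')^{\ell-\ell'}$) yields the chain
\[
q^{\ell'-1}A\;\le\;\frac{q^{\ell-1}}{(q')^{\ell-\ell'}}\,A\;<\;\frac{(q')^{\ell-1}}{(q')^{\ell-\ell'}}\,B\;=\;(q')^{\ell'-1}B,
\]
which is exactly what is required. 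The middle inequality is strict because it is merely the $\ell$-th kind inequality rescaled by a positive constant, so no case distinction on whether $A=0$ is needed.

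There is no genuine obstacle here; the one point to keep straight is the direction of the exponent estimate, namely that raising the smaller base $q'$ to the positive power $\ell-\ell'$ keeps it below $q^{\ell-\ell'}$, which is precisely what permits trading the exponent $\ell-1$ for $\ell'-1$ without reversing the strict inequality. An equivalent route would be to observe that it suffices to treat $\ell'=\ell-1$ and then iterate, but the displayed chain disposes of all $\ell'<\ell$ at once.
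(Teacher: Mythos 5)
Your proof is correct and takes essentially the same route as the paper, whose proof simply asserts that $q^{\ell-1}\left|\alpha-\frac{p}{q}\right|<(q')^{\ell-1}\left|\alpha-\frac{p'}{q'}\right|$ with $q'\leq q$ ``obviously'' yields the same inequality with $\ell'-1$ in place of $\ell-1$; your chain $q^{\ell'-1}A\leq q^{\ell-1}A/(q')^{\ell-\ell'}<(q')^{\ell'-1}B$ merely spells out the exponent-monotonicity step the paper leaves implicit. No gaps.
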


\begin{proof}
If $0<q'\leq q$ and
\begin{equation*}
q^{\ell-1}\left|\alpha-\frac{p}{q}\right|<(q')^{\ell-1}\left|\alpha-\frac{p'}{q'}\right|,
\end{equation*}
then obviously
\begin{equation*}
q^{\ell'-1}\left|\alpha-\frac{p}{q}\right|<(q')^{\ell'-1}\left|\alpha-\frac{p'}{q'}\right|
\end{equation*}
for all $\ell'<\ell$. The inequalities above immediately imply that if $p/q$ obeys definition of a best lower (upper) approximation of the $\ell$-th kind to $\alpha$, then it obeys the respective definition for $\ell'$ as well.
\end{proof}

In some situations one can easily specify a certain subset of $\mathbb{Q}$ such that each $\BLDA(\ell)$ (or $\BUDA(\ell)$) to a given $\alpha$ is an element of this subset. We will encounter such situations in subsequent sections. Then the determination of $\BLDA(\ell)$ and $\BUDA(\ell)$ to $\alpha$ can be simplified using Proposition~\ref{Prop.crit} below.

\begin{proposition}\label{Prop.crit}
(i)\; Let $\mathcal{S}_L\subset\mathbb{Q}\cap(-\infty,\alpha]$ contain all $\BLDA(\ell)$ to $\alpha$. Then $\frac{p}{q}\in\mathcal{S}_L$ is a $\BLDA(\ell)$ to $\alpha$ if and only if
\begin{equation}\label{Crit.below}
\forall\,\frac{p'}{q'}\in\mathcal{S}_L:\qquad q'\leq q \quad\Rightarrow\quad q^{\ell-1}\left(\alpha-\frac{p}{q}\right)<(q')^{\ell-1}\left(\alpha-\frac{p'}{q'}\right).
\end{equation}

(ii)\; Let $\mathcal{S}_U\subset\mathbb{Q}\cap[\alpha,\infty)$ contain all $\BUDA(\ell)$ to $\alpha$. Then $\frac{p}{q}\in\mathcal{S}_U$ is a $\BUDA(\ell)$ to $\alpha$ if and only if
\begin{equation*}\label{Crit.above}
\forall\,\frac{p'}{q'}\in\mathcal{S}_U:\qquad q'\leq q \quad\Rightarrow\quad q^{\ell-1}\left(\frac{p}{q}-\alpha\right)<(q')^{\ell-1}\left(\frac{p'}{q'}-\alpha\right)\,.
\end{equation*}
\end{proposition}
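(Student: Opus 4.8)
The plan is to prove part~(i); part~(ii) will then follow either by the left--right symmetry noted after Observation~\ref{floorceil} (replacing $\alpha$ by $-\alpha$) or by a verbatim repetition of the argument. The forward implication is immediate: if $\frac pq$ is a $\BLDA(\ell)$, then (\ref{rth below}) holds for every competitor $\frac{p'}{q'}\le\alpha$ with $\frac{p'}{q'}\ne\frac pq$ and $q'\le q$, and since $\mathcal{S}_L\subseteq\mathbb{Q}\cap(-\infty,\alpha]$ it holds in particular for the competitors drawn from $\mathcal{S}_L$, which is exactly (\ref{Crit.below}). Hence the whole content lies in the reverse implication, which I would prove by contradiction.

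To set this up I would introduce the one-sided functional $\Phi\!\left(\frac ab\right)=b^{\ell-1}\!\left(\alpha-\frac ab\right)\ge0$, defined for $\frac ab\le\alpha$, so that being a $\BLDA(\ell)$ means $\Phi\!\left(\frac pq\right)<\Phi\!\left(\frac{a}{b}\right)$ for all admissible competitors $\frac ab\le\alpha$, $\frac ab\ne\frac pq$, with $b\le q$ (the nonnegativity in (\ref{rth below}) being automatic from $\frac pq\in\mathcal{S}_L$). Assuming (\ref{Crit.below}) holds but $\frac pq$ is not a $\BLDA(\ell)$, there is a competitor $\frac{p'}{q'}\le\alpha$, $\frac{p'}{q'}\ne\frac pq$, $q'\le q$, with $\Phi\!\left(\frac{p'}{q'}\right)\le\Phi\!\left(\frac pq\right)$. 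The key step will be an auxiliary lemma: for every $N\in\mathbb{N}$ the minimum of $\Phi$ over all $\frac ab\le\alpha$ with $b\le N$ is attained, and the minimizer of smallest denominator is itself a $\BLDA(\ell)$. Attainment is clear, since for each fixed $b$ the optimal numerator is $a=\lfloor b\alpha\rfloor$ by Observation~\ref{floorceil}, reducing the problem to a minimum over the finitely many values $\Phi\!\left(\frac{\lfloor b\alpha\rfloor}{b}\right)$, $1\le b\le N$; and if $\frac{p^*}{q^*}$ is the minimizer of smallest denominator $q^*$, then any admissible competitor with denominator $\le q^*$ has $\Phi$ no smaller than the minimum, with equality only for $\frac{p^*}{q^*}$ itself, which is precisely the defining property of a $\BLDA(\ell)$.

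Granting the lemma, I would apply it with $N=q'$. The resulting $\frac{p^*}{q^*}$ is a $\BLDA(\ell)$ with $q^*\le q'\le q$ and $\Phi\!\left(\frac{p^*}{q^*}\right)\le\Phi\!\left(\frac{p'}{q'}\right)\le\Phi\!\left(\frac pq\right)$; since $\mathcal{S}_L$ contains every $\BLDA(\ell)$, we have $\frac{p^*}{q^*}\in\mathcal{S}_L$. When $\frac{p^*}{q^*}\ne\frac pq$, applying the hypothesis (\ref{Crit.below}) to the element $\frac{p^*}{q^*}\in\mathcal{S}_L$ yields $\Phi\!\left(\frac pq\right)<\Phi\!\left(\frac{p^*}{q^*}\right)$, which contradicts $\Phi\!\left(\frac{p^*}{q^*}\right)\le\Phi\!\left(\frac pq\right)$ and finishes the argument.

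The step I expect to be the main obstacle is the degenerate case in which $\frac{p^*}{q^*}=\frac pq$ as rational numbers but the two carry different denominators, i.e. the bookkeeping of non-reduced representations. I would handle it by noting that for $\ell\ge2$ the functional $\Phi$ strictly increases when a fraction is rewritten with a larger denominator, so that every $\BLDA(\ell)$ and the lemma's minimizer are automatically in lowest terms; assuming (as is natural) that the tested $\frac pq\in\mathcal{S}_L$ is reduced as well, $\frac{p^*}{q^*}=\frac pq$ then forces $q^*=q$, whence $q'=q$, and the minimality of $\Phi$ together with the uniqueness of the optimal numerator $\lfloor q\alpha\rfloor$ forces $\frac{p'}{q'}=\frac pq$, contradicting $\frac{p'}{q'}\ne\frac pq$. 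Making this representation bookkeeping fully precise, and separately checking the borderline kind $\ell=1$ (where $\Phi$ depends only on the rational value, independently of the denominator), is the only genuinely delicate part.
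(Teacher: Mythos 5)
Your proposal is correct and takes essentially the same route as the paper: the paper also settles the nontrivial direction by an extremal choice, selecting among the competing pairs the one minimizing $(q')^{\ell-1}\left(\alpha-\frac{p'}{q'}\right)$ (with smallest denominator as tie-break) and observing that this minimizer is itself a $\BLDA(\ell)$, hence lies in $\mathcal{S}_L$ and witnesses the failure of (\ref{Crit.below}). Your repackaging of this as a lemma on minimizers over all fractions with denominator at most $N$, plus the reduced-representation and $\ell=1$ bookkeeping, merely makes explicit details the paper leaves implicit.
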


\begin{proof}
(i)\; If $\frac{p}{q}$ is a $\BLDA(\ell)$, then (\ref{Crit.below}) is true due to Definition~\ref{Def. rth}.

Conversely, let $\frac{p}{q}\in\mathcal{S}_L$ be \emph{not} a $\BLDA(\ell)$; we will show that $\frac{p}{q}$ violates~(\ref{Crit.below}). Since $\frac{p}{q}$ is not a $\BLDA(\ell)$, there exist $p',q'\in\mathbb{Z}$ such that $\alpha\geq\frac{p'}{q'}\neq\frac{p}{q}$, $0<q'\leq q$ and (\ref{rth below}) is violated, i.e.,
\begin{equation}\label{smaller}
q^{\ell-1}\left(\alpha-\frac{p}{q}\right)\geq(q')^{\ell-1}\left(\alpha-\frac{p'}{q'}\right).
\end{equation}
Among the pairs $(p',q')$ with this property, choose the pair for which the quantity $(q')^{\ell-1}\left(\alpha-\frac{p'}{q'}\right)$ is minimal. In case that several such pairs exist, let us consider the one with minimal $q'$. This construction guarantees that the fraction $\frac{p'}{q'}$ is a $\BLDA(\ell)$ to $\alpha$. Hence $\frac{p'}{q'}\in\mathcal{S}_L$, and (\ref{Crit.below}) is violated due to~(\ref{smaller}).

(ii)\; The proof is similar to (i).
\end{proof}

\section{Continued fractions}\label{Sect.CF}

Any $\alpha\in\mathbb{R}$ can be expressed in terms of a \emph{continued fraction}, that is, in the form
\begin{equation}\label{contfrac}
\alpha=a_0+\frac{1}{a_1+\frac{1}{a_2+\frac{1}{a_3+\frac{1}{\cdots}}}}\;,
\end{equation}
where $a_0\in\mathbb{Z}$ and $a_j\in\mathbb{N}$ for all $j\in\mathbb{N}$. The fraction on the right hand side of~(\ref{contfrac}) is commonly represented using the notation $[a_0;a_1,a_2,a_3,\ldots]$.

It is easy to see that the sequence $a_0,a_1,a_2,\ldots$ is finite if and only if $\alpha\in\mathbb{Q}$. For finite continued fractions $\alpha=[a_0;a_1,a_2,\ldots,a_n]$ ($n\in\mathbb{N}$), we usually assume that the last term $a_n$ is different from $1$ for the sake of uniqueness of the representation~(\ref{contfrac}) (notice that $[a_0;a_1,a_2,\ldots,a_{n-1},1]=[a_0;a_1,a_2,\ldots,a_{n-1}+1]$).

For a given continued fraction $\alpha=[a_0;a_1,a_2,a_3,\ldots]$ and $n\in\mathbb{N}_0$, the fraction $\frac{p_n}{q_n}=[a_0;a_1,a_2,\ldots,a_n]$ is called the \emph{$n$-th convergent} of $\alpha$. If we set
\begin{equation*}
p_{-1}=1,\quad p_0=a_0 \qquad\mbox{and}\qquad q_{-1}=0,\quad q_0=1,
\end{equation*}
the values of $p_n$ and $q_n$ ($n\in\mathbb{N}$) are given by recurrent formulas
\begin{equation}\label{rekurence}
p_{n}=a_{n}p_{n-1}+p_{n-2} \qquad\mbox{and}\qquad q_{n}=a_{n}q_{n-1}+q_{n-2}\,.
\end{equation}
Numbers $p_n$ and $q_n$ obey the following well-known rules~\cite[eq.~(8) and Thm.~6]{Kh64}:
\begin{equation}\label{Kh(8)}
q_n p_{n-1}-p_n q_{n-1}=(-1)^n \qquad\mbox{for all $n\geq0$},
\end{equation}
\begin{equation}\label{Kh Thm6}
\frac{q_{n}}{q_{n-1}}=[a_n;a_{n-1},\ldots,a_1] \qquad\mbox{for all $n\geq1$}.
\end{equation}

The recurrent formulas~(\ref{rekurence}) remain valid also if the terms $a_n>0$ in~(\ref{contfrac}) are not integers~\cite[\S3]{Sch80}. This will help us to derive an important identity in Proposition~\ref{Prop.Ha} below.
\begin{proposition}\label{Prop.Ha}
For every $n\geq1$, we have
\begin{equation}\label{Ha.(2.2)}
\alpha-\frac{p_n}{q_n}=\frac{(-1)^n}{q_n^2\left([a_{n+1};a_{n+2},\ldots]+[0;a_n,a_{n-1},\ldots,a_1]\right)}\;.
\end{equation}
\end{proposition}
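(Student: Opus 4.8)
The plan is to use the structural fact, recalled just before the statement, that the recurrence (\ref{rekurence}) stays valid when the last partial quotient is replaced by an arbitrary positive real number. Let $\alpha_{n+1}:=[a_{n+1};a_{n+2},\ldots]$ denote the complete quotient, i.e.\ the tail of the expansion of $\alpha$ beyond $a_n$. Feeding $\alpha_{n+1}$ into (\ref{rekurence}) as a (generally non-integer) ``$(n+1)$-th term'' and using the already-computed $p_n,q_n,p_{n-1},q_{n-1}$, I would first record the identity
\[
\alpha=\frac{\alpha_{n+1}p_n+p_{n-1}}{\alpha_{n+1}q_n+q_{n-1}}\,.
\]
This is the one genuinely structural input of the argument; everything that follows is elementary algebra.

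Next I would subtract $\frac{p_n}{q_n}$ and bring the difference to a common denominator,
\[
\alpha-\frac{p_n}{q_n}=\frac{q_n\left(\alpha_{n+1}p_n+p_{n-1}\right)-p_n\left(\alpha_{n+1}q_n+q_{n-1}\right)}{q_n\left(\alpha_{n+1}q_n+q_{n-1}\right)}\,.
\]
In the numerator the two terms carrying the factor $\alpha_{n+1}$ cancel, leaving $q_np_{n-1}-p_nq_{n-1}$, which equals $(-1)^n$ by (\ref{Kh(8)}). Factoring $q_n$ out of the denominator then gives
\[
\alpha-\frac{p_n}{q_n}=\frac{(-1)^n}{q_n\left(\alpha_{n+1}q_n+q_{n-1}\right)}=\frac{(-1)^n}{q_n^2\left(\alpha_{n+1}+q_{n-1}/q_n\right)}\,.
\]

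It then remains only to recognise the two summands in the denominator as the two continued fractions appearing in (\ref{Ha.(2.2)}). The first, $\alpha_{n+1}$, is $[a_{n+1};a_{n+2},\ldots]$ by definition. For the second, (\ref{Kh Thm6}) yields $q_n/q_{n-1}=[a_n;a_{n-1},\ldots,a_1]$, and taking reciprocals turns this into $q_{n-1}/q_n=[0;a_n,a_{n-1},\ldots,a_1]$; substituting both expressions completes the derivation.

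I do not anticipate a serious obstacle: once the opening identity is in place the computation is short and purely mechanical. The only step demanding care is the justification of that identity for a real, non-integer tail $\alpha_{n+1}$, which rests precisely on the validity of (\ref{rekurence}) beyond integer entries, as noted in the text with reference to \cite{Sch80}. One minor bookkeeping point is that for rational $\alpha$ the expansion terminates, so the formula is to be read for those $n$ for which $a_{n+1}$ (equivalently $\alpha_{n+1}$) exists; for irrational $\alpha$ every $n\geq1$ is admissible.
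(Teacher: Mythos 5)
Your argument is correct and is essentially the paper's own proof: both insert the complete quotient $[a_{n+1};a_{n+2},\ldots]$ as a non-integer final term into the recurrence (\ref{rekurence}) (justified via \cite{Sch80}), cancel the numerator down to $(-1)^n$ using (\ref{Kh(8)}), and then identify $q_{n-1}/q_n$ with $[0;a_n,a_{n-1},\ldots,a_1]$ through (\ref{Kh Thm6}) and reciprocation. Your closing caveat about terminating expansions of rational $\alpha$ is a sensible bookkeeping remark that the paper leaves implicit.
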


\begin{proof}
Since
$$
\alpha=[a_0;a_1,a_2,\ldots,a_n,a_{n+1},a_{n+2},\ldots]=[a_0;a_1,a_2,\ldots,a_n,[a_{n+1};a_{n+2},\ldots]]\,,
$$
formula~(\ref{rekurence}) gives
\begin{equation}\label{krok1}
\alpha-\frac{p_n}{q_n}=\frac{p_n[a_{n+1};a_{n+2},\ldots]+p_{n-1}}{q_n[a_{n+1};a_{n+2},\ldots]+q_{n-1}}-\frac{p_n}{q_n}\,.
\end{equation}
Applying~(\ref{Kh(8)}), we transform~(\ref{krok1}) into
\begin{equation}\label{krok2}
\fl \alpha-\frac{p_n}{q_n}=\frac{(-1)^n}{q_n\left(q_n[a_{n+1};a_{n+2},\ldots]+q_{n-1}\right)}=\frac{(-1)^n}{q_n^2\left([a_{n+1};a_{n+2},\ldots]+\frac{q_{n-1}}{q_n}\right)}\;.
\end{equation}
Finally, (\ref{Kh Thm6}) and trivial identity $[a_n;a_{n-1},\ldots,a_1]^{-1}=[0;a_n,a_{n-1},\ldots,a_1]$ allows us to rearrange the denominator on the right hand side of~(\ref{krok2}) into the required form
$
q_n^2\left([a_{n+1};a_{n+2},\ldots]+[0;a_n,a_{n-1},\ldots,a_1]\right).
$
\end{proof}

A \emph{semiconvergent} (or \emph{intermediate fraction}) of $\alpha$ is a fraction of the form
\begin{equation}\label{semiconvergents}
\frac{p_{n}r+p_{n-1}}{q_{n} r+q_{n-1}}\;, \qquad \mbox{where $0<r<a_{n+1}$}.
\end{equation}
Note that if we set $r=0$ (except for $n=0$) or $r=a_{n+1}$ in~(\ref{semiconvergents}), we get the convergents $\frac{p_{n-1}}{q_{n-1}}$ and $\frac{p_{n+1}}{q_{n+1}}$, respectively.

Let us resume well--known facts about values of convergents and semiconvergents:
\begin{proposition}\label{Prop.monotonie}
\begin{itemize}
\item \textup{\cite[Thm.~4 and Thm.~8]{Kh64}} The even-order convergents are smaller or equal to $\alpha$ and form an increasing sequence. The odd-order convergents are greater or equal to $\alpha$ and form a decreasing sequence. That is,
\begin{equation*}
\frac{p_0}{q_0}<\frac{p_2}{q_2}<\frac{p_4}{q_4}<\cdots\leq\alpha\leq\cdots<\frac{p_5}{q_5}<\frac{p_3}{q_3}<\frac{p_1}{q_1}\;.
\end{equation*}
\item \textup{\cite[p.~13--14]{Kh64}} The fractions
$$
\frac{p_{n-2}}{q_{n-2}}=\frac{p_{n-1}\cdot0+p_{n-2}}{q_{n-1}\cdot0+q_{n-2}}\,,\;\frac{p_{n-1}\cdot1+p_{n-2}}{q_{n-1}\cdot1+q_{n-2}}\,,\;\cdots\,,\;\frac{p_{n-1}(a_{n}-1)+p_{n-2}}{q_{n-1}(a_{n}-1)+q_{n-2}}\,,\;\frac{p_{n-1} a_{n}+p_{n-2}}{q_{n-1} a_{n}+q_{n-2}}=\frac{p_{n}}{q_{n}}
$$
form a monotonous sequence that is increasing for even $n$ and decreasing for odd $n$.
\end{itemize}
\end{proposition}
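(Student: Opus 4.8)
The plan is to reduce both bullets to the single fundamental identity~(\ref{Kh(8)}) together with the recurrence~(\ref{rekurence}). The two claims are really two facets of one computation: the second bullet concerns the semiconvergents as $r$ runs through $0,1,\ldots,a_n$, and the monotonicity of same-parity convergents in the first bullet is just the endpoint comparison $r=0$ versus $r=a_n$, which pits $\frac{p_{n-2}}{q_{n-2}}$ against $\frac{p_n}{q_n}$.

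For the position of the convergents relative to $\alpha$ I would simply invoke Proposition~\ref{Prop.Ha}. The denominator on the right-hand side of~(\ref{Ha.(2.2)}) is a sum of two positive continued-fraction tails, hence strictly positive, so the sign of $\alpha-\frac{p_n}{q_n}$ is exactly $(-1)^n$. This gives $\frac{p_n}{q_n}\le\alpha$ for even $n$ and $\frac{p_n}{q_n}\ge\alpha$ for odd $n$, precisely the bracketing asserted by the first bullet (with equality occurring only when the expansion terminates at index $n$).

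To obtain the monotonicity I would work with the semiconvergent map $f(r)=\frac{p_{n-1}r+p_{n-2}}{q_{n-1}r+q_{n-2}}$ of~(\ref{semiconvergents}) and compute the difference of two consecutive values. Upon clearing denominators every term quadratic in $r$ cancels, and what survives in the numerator is exactly $p_{n-1}q_{n-2}-p_{n-2}q_{n-1}$, which by~(\ref{Kh(8)}) applied with index $n-1$ equals $(-1)^n$. Thus
\begin{equation*}
f(r+1)-f(r)=\frac{(-1)^n}{\left(q_{n-1}(r+1)+q_{n-2}\right)\left(q_{n-1}r+q_{n-2}\right)}\,,
\end{equation*}
and the denominator is positive for $r\ge0$. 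The increment therefore keeps a constant sign $(-1)^n$, so the displayed list of semiconvergents is strictly increasing for even $n$ and strictly decreasing for odd $n$, which is the second bullet. Specializing to $r=0$ and $r=a_n$ yields $\frac{p_{n-2}}{q_{n-2}}<\frac{p_n}{q_n}$ for even $n$ and the reverse for odd $n$, chaining the even convergents into one increasing sequence and the odd convergents into one decreasing sequence; combined with the bracketing of the previous paragraph this completes the first bullet.

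I do not anticipate a genuine obstacle, as these are classical facts and every step rests on~(\ref{Kh(8)}). The only points demanding care are bookkeeping ones: tracking the parity of the sign through the index shift in~(\ref{Kh(8)}), verifying that the denominators $q_{n-1}r+q_{n-2}$ are strictly positive (which holds since $q_{n-1}\ge1$ for $n\ge2$ and $r\ge0$), and keeping the weak inequalities $\le\alpha\le$ in the middle of the chain separate from the strict inequalities between convergents of equal parity—the equality arising exactly when $\alpha\in\mathbb{Q}$ and its expansion stops at the convergent in question.
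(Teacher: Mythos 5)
Your proof is correct, but it is worth noting that the paper itself offers no proof of Proposition~\ref{Prop.monotonie} at all: both bullets are quoted as classical facts with citations to Khinchin's book, so any comparison is with the literature rather than with an argument in the text. Your derivation is a clean, self-contained alternative, and it is legitimate within the paper's own logical order, since Proposition~\ref{Prop.Ha} is stated and proved \emph{before} Proposition~\ref{Prop.monotonie} using only~(\ref{rekurence}), (\ref{Kh(8)}) and~(\ref{Kh Thm6}), so invoking~(\ref{Ha.(2.2)}) for the sign of $\alpha-\frac{p_n}{q_n}$ involves no circularity. Your computation of the increment of $f(r)=\frac{p_{n-1}r+p_{n-2}}{q_{n-1}r+q_{n-2}}$ is right: the quadratic terms cancel, the surviving numerator is $p_{n-1}q_{n-2}-p_{n-2}q_{n-1}=(-1)^n$ by~(\ref{Kh(8)}) at index $n-1$, and the constant sign of the increment gives the second bullet, with the endpoints $r=0$ and $r=a_n$ chaining the same-parity convergents as you say. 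Two small boundary cases deserve an explicit word. First, for $n=1$ the leftmost listed fraction is $\frac{p_{-1}}{q_{-1}}=\frac{1}{0}$, so your positivity claim for the denominators genuinely needs $n\geq2$ (as you note), or the $n=1$ case handled separately starting from $r=1$. Second, Proposition~\ref{Prop.Ha} is stated for $n\geq1$ and its right-hand side presupposes that the tail $[a_{n+1};a_{n+2},\ldots]$ exists; the sign claim for $n=0$ is trivial ($p_0/q_0=\lfloor\alpha\rfloor\leq\alpha$), and when the expansion terminates at $n$ one has equality $\alpha=\frac{p_n}{q_n}$, which is exactly the weak inequality in the middle of the chain---you flag this correctly. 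With those caveats made explicit, your argument fully establishes the proposition from the paper's displayed identities, which is arguably more useful to a reader than the bare citation the paper provides.
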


\noindent Continued fractions are compared using the following criterion:
\begin{proposition}\label{Prop.comparison}
(i)\; Let $\alpha=[a_0;a_1,a_2,a_3,\ldots]$, $\beta=[b_0;b_1,b_2,b_3,\ldots]$ and $n$ be the minimal index such that $a_n\neq b_n$. Then
$$
\alpha<\beta \quad\Leftrightarrow\quad (\mbox{$n$ is even} \;\mbox{and}\; a_n<b_n) \;\;\mbox{or}\;\; (\mbox{$n$ is odd} \;\mbox{and}\; a_n>b_n).
$$
(ii)\; If $\alpha=[a_0;a_1,a_2,\ldots,a_n]$ and $\beta=[a_0;a_1,a_2,a_3,\ldots]$, then $\alpha<\beta$ if and only if $n$ is even.
\end{proposition}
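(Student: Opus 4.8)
The plan is to treat the two parts separately: part (ii) falls out almost immediately from Proposition~\ref{Prop.Ha}, and part (i) follows from a short computation with the extended convergent recurrence together with~(\ref{Kh(8)}).

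For part (ii), I would observe that $\alpha=[a_0;a_1,\ldots,a_n]$ is precisely the $n$-th convergent $\frac{p_n}{q_n}$ of $\beta=[a_0;a_1,a_2,\ldots]$, since the two expansions agree in their first $n+1$ terms and $\beta$ continues beyond index $n$. Applying Proposition~\ref{Prop.Ha} to $\beta$ then gives
\begin{equation*}
\beta-\alpha=\beta-\frac{p_n}{q_n}=\frac{(-1)^n}{q_n^2\left([a_{n+1};a_{n+2},\ldots]+[0;a_n,\ldots,a_1]\right)}\,.
\end{equation*}
Because $[a_{n+1};a_{n+2},\ldots]\geq a_{n+1}\geq1$ and $[0;a_n,\ldots,a_1]\geq0$, the bracket is strictly positive, so the sign of $\beta-\alpha$ equals the sign of $(-1)^n$. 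Hence $\alpha<\beta$ if and only if $n$ is even, as claimed.

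For part (i), let $n$ be the first index with $a_n\neq b_n$. Since $a_j=b_j$ for $j<n$, the two numbers share the quantities $p_{n-1},q_{n-1},p_{n-2},q_{n-2}$. Writing the tails $x=[a_n;a_{n+1},\ldots]$ and $y=[b_n;b_{n+1},\ldots]$ and invoking the recurrence~(\ref{rekurence}), which remains valid for real partial quotients \cite[\S3]{Sch80}, I would express
\begin{equation*}
\alpha=\frac{p_{n-1}x+p_{n-2}}{q_{n-1}x+q_{n-2}}\,,\qquad \beta=\frac{p_{n-1}y+p_{n-2}}{q_{n-1}y+q_{n-2}}\,.
\end{equation*}
Subtracting and simplifying, the $xy$-terms cancel and the determinant $p_{n-1}q_{n-2}-p_{n-2}q_{n-1}$ appears; by~(\ref{Kh(8)}) applied with index $n-1$ this equals $(-1)^n$, so that
\begin{equation*}
\alpha-\beta=\frac{(-1)^n(x-y)}{(q_{n-1}x+q_{n-2})(q_{n-1}y+q_{n-2})}\,.
\end{equation*}
The denominator is positive, whence $\alpha<\beta$ if and only if $(-1)^n(x-y)<0$.

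It then remains to relate $x<y$ to $a_n<b_n$. Each tail satisfies $x=a_n+[0;a_{n+1},\ldots]$ with $[0;a_{n+1},\ldots]\in[0,1)$, and similarly for $y$, so $a_n<b_n$ forces $x<a_n+1\leq b_n\leq y$, while $a_n>b_n$ forces $x>y$; thus $x<y\Leftrightarrow a_n<b_n$. Feeding this equivalence into the sign condition gives the two stated cases: for even $n$, $\alpha<\beta\Leftrightarrow a_n<b_n$, and for odd $n$, $\alpha<\beta\Leftrightarrow a_n>b_n$. I expect the only delicate points to be the boundary behaviour of the fractional part $[0;a_{n+1},\ldots]$, which can reach $1$ only for a non-canonical tail ending in the partial quotient $1$ and is therefore excluded by the uniqueness convention, and the degenerate index $n=0$, where no shared convergents exist and one argues directly from $\alpha\in[a_0,a_0+1)$ and $\beta\in[b_0,b_0+1)$ with $n=0$ even.
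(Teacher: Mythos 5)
Your proof is correct and follows essentially the same route as the paper, whose one-line argument writes $\alpha-\beta=\bigl(\alpha-\frac{p_{n-1}}{q_{n-1}}\bigr)-\bigl(\beta-\frac{p_{n-1}}{q_{n-1}}\bigr)$ and applies~(\ref{Ha.(2.2)}) twice at the last common convergent: your Möbius-transform computation with the tails $x,y$ and the determinant identity~(\ref{Kh(8)}) is exactly the mechanism underlying that subtraction, and your part~(ii) is the same direct use of Proposition~\ref{Prop.Ha}. If anything, your write-up is slightly more careful than the paper's, since you explicitly settle the tail comparison $x\lessgtr y$ via the convention excluding a final partial quotient $1$ and handle the boundary case $n=0$, which the paper's terse proof leaves implicit.
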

\begin{proof}
Proposition \ref{Prop.comparison} is an immediate consequence of Proposition \ref{Prop.Ha}: we write $\alpha -\beta =\alpha -\frac{p_{n-1}}{q_{n-1}}-(\beta -\frac{p_{n-1}}{q_{n-1}})$ and apply (\ref{Ha.(2.2)}) on both expressions $\alpha -\frac{p_{n-1}}{q_{n-1}}$ and $\beta -\frac{p_{n-1}}{q_{n-1}}$.
\end{proof}

\section{Approximations of the first and second kind}\label{Section:1st,2nd}

We provide a complete characterization of best lower Diophantine approximations and best upper Dipohantine approximations of the first and second kind in this section.

We start from a necessary condition for $\frac{p}{q}$ to be a best one--sided approximation of the \emph{first kind} to a given $\alpha$.\footnote{A statement equivalent to Theorem~\ref{Thm.1} was recently published independently by S.~Bettin in~\cite{Be17}.}

\begin{theorem}\label{Thm.1}
Every best lower or upper approximation of the 1st kind to $\alpha$ is either a convergent or a semiconvergent of $\alpha$.
\end{theorem}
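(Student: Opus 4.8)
The plan is to treat the lower case in full; the upper case then follows at once from the symmetry $\alpha\mapsto-\alpha$, $\frac{p}{q}\mapsto\frac{-p}{q}$ recorded after Observation~\ref{floorceil}. For $\ell=1$ the factors $q^{\ell-1}$ and $(q')^{\ell-1}$ disappear, so a $\BLDA(1)$ is simply the fraction $\frac{p}{q}\le\alpha$ closest to $\alpha$ from below among all fractions $\le\alpha$ with denominator at most $q$; equivalently (using $p=\lfloor q\alpha\rfloor$ from Observation~\ref{floorceil}), $\frac{p}{q}$ is a $\BLDA(1)$ precisely when no fraction with denominator $\le q$ lies strictly between $\frac{p}{q}$ and $\alpha$. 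The first step is to assemble all convergents and semiconvergents lying below $\alpha$ into one increasing chain. By Proposition~\ref{Prop.monotonie}, for each even $n$ the fractions $\frac{p_{n-1}s+p_{n-2}}{q_{n-1}s+q_{n-2}}$ with $s=0,1,\dots,a_n$ increase from $\frac{p_{n-2}}{q_{n-2}}$ to $\frac{p_n}{q_n}$ and remain $\le\alpha$. Concatenating these blocks over $n=2,4,6,\dots$ (successive blocks overlap exactly at the even convergent joining them) yields a strictly increasing sequence $\frac{P_0}{Q_0}<\frac{P_1}{Q_1}<\cdots$ of all lower convergents and semiconvergents, with $Q_0<Q_1<\cdots$, supremum $\alpha$, and $\frac{P_0}{Q_0}=\frac{a_0}{1}=\lfloor\alpha\rfloor$.

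The key structural step is to show that consecutive terms of this chain are unimodular (Farey) neighbours. Writing two consecutive in-block terms with $s$ and $s+1$ and expanding, their cross-determinant reduces via~(\ref{Kh(8)}) to
\[
P_{k+1}Q_k-P_kQ_{k+1}=p_{n-1}q_{n-2}-p_{n-2}q_{n-1}=(-1)^{n}=1,
\]
the last equality holding because $n$ is even; since each block ends and the next begins at a shared convergent, no separate transition case is needed. I then invoke the elementary mediant bound: if $\frac{P_k}{Q_k}<\frac{p}{q}<\frac{P_{k+1}}{Q_{k+1}}$ and $P_{k+1}Q_k-P_kQ_{k+1}=1$, then $q\ge Q_k+Q_{k+1}$. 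This follows from the integer inequalities $pQ_k-P_kq\ge1$ and $P_{k+1}q-pQ_{k+1}\ge1$ by forming the combination
\[
Q_{k+1}(pQ_k-P_kq)+Q_k(P_{k+1}q-pQ_{k+1})=q(P_{k+1}Q_k-P_kQ_{k+1})=q.
\]

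With these facts the conclusion is a short case analysis. Let $\frac{p}{q}\le\alpha$ be a $\BLDA(1)$. If $\frac{p}{q}<\frac{P_0}{Q_0}=\lfloor\alpha\rfloor$, then $\lfloor\alpha\rfloor$ is a strictly better lower approximation of denominator $1\le q$, contradicting the definition; hence $\frac{p}{q}\ge\frac{P_0}{Q_0}$. If $\frac{p}{q}$ were none of the $\frac{P_k}{Q_k}$, it would lie strictly between two consecutive ones, $\frac{P_k}{Q_k}<\frac{p}{q}<\frac{P_{k+1}}{Q_{k+1}}$, and the mediant bound would give $q\ge Q_k+Q_{k+1}>Q_{k+1}$. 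But then $\frac{P_{k+1}}{Q_{k+1}}\le\alpha$ is a fraction with $Q_{k+1}<q$ and $\frac{p}{q}<\frac{P_{k+1}}{Q_{k+1}}$, i.e.\ a strictly better lower approximation with smaller denominator, again contradicting that $\frac{p}{q}$ is a $\BLDA(1)$. Therefore $\frac{p}{q}$ coincides with some $\frac{P_k}{Q_k}$, that is, with a convergent or a semiconvergent.

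I expect the main obstacle to lie in the bookkeeping of the first two steps rather than in the final argument: one must verify that exactly the listed semiconvergents fall below $\alpha$, that the blocks splice into a single chain with strictly increasing denominators and no gaps across the block boundaries, and that the unimodularity survives these boundaries. A secondary point requiring care is the terminating case in which $\alpha\in\mathbb{Q}$, where the chain ends at $\alpha$ itself (realised as a convergent) and the ``strictly between'' situation cannot arise, so the conclusion is immediate there.
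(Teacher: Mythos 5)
Your proposal is correct and follows essentially the same route as the paper's proof: both arrange the lower convergents and semiconvergents into a single increasing chain (your even-$n$ blocks are the paper's odd-$n$ indexing shifted by one) and use the unimodularity from~(\ref{Kh(8)}) to show that any fraction strictly between consecutive chain elements has denominator larger than that of the right-hand element, which is then a strictly better lower approximation with smaller denominator --- your Farey/mediant lemma $q\geq Q_k+Q_{k+1}$ is just a cleaner packaging of the paper's two estimates~(\ref{odhad shora}) and~(\ref{odhad zdola}). One small correction to your closing remark: for rational $\alpha=[a_0;a_1,\ldots,a_N]$ with $N$ odd the lower chain ends at $\frac{p_{N-1}}{q_{N-1}}$, not at $\alpha$, so the ``strictly between'' situation does arise in the gap $\bigl(\frac{p_{N-1}}{q_{N-1}},\alpha\bigr)$; it is handled by appending $\alpha=\frac{p_N}{q_N}$ to the chain (it is unimodular with its predecessor and has approximation error zero, hence beats any such fraction), a terminating case the paper's proof likewise glosses over.
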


\begin{proof}
We will prove that every best lower approximation of the $1$-st kind to $\alpha$ is a convergent or a semiconvergent of $\alpha$. The case of best upper approximations would be treated similarly, so we omit it for the sake of brevity.

To prove this, we assume that $\frac{p}{q}<\alpha$ ($p\in\mathbb{Z}$, $q\in\mathbb{N}$) is neither a convergent nor a semiconvergent of $\alpha$, and show that $\frac{p}{q}$ is not a $\BLDA(1)$ to $\alpha$.
Proposition~\ref{Prop.monotonie} implies that the smallest convergent or semiconvergent of $\alpha$ is $\frac{p_0}{q_0}=\frac{a_0}{1}$. The proof thus falls into $2$ cases:

$\bullet$\; If $\frac{p}{q}<\frac{a_0}{1}$, we have
\begin{equation*}
\alpha-\frac{p}{q}>\alpha-\frac{a_0}{1} \qquad \mbox{and} \qquad 0<1\leq q\,;
\end{equation*}
i.e., $\frac{p}{q}$ contradicts (\ref{rth below}) (consider $p'=a_0,q'=1$). So $\frac{p}{q}$ is not a $\BLDA(1)$.

$\bullet$\; Let $\frac{p}{q}<\alpha$ lie between two adjacent fractions from the set of convergents and semiconvergents. That is, due to Proposition~\ref{Prop.monotonie}, $\frac{p}{q}$ satisfies
\begin{equation*}
\frac{p_{n}r+p_{n-1}}{q_{n}r+q_{n-1}}<\frac{p}{q}<\frac{p_{n}(r+1)+p_{n-1}}{q_{n}(r+1)+q_{n-1}}
\end{equation*}
for some odd $n$ and $r\in\{0,1,\ldots,a_{n+1}-1\}$. Furthermore,
\begin{equation}\label{odhad shora}
\eqalign{
\fl \frac{p}{q}-\frac{p_{n}r+p_{n-1}}{q_{n}r+q_{n-1}}
&<\frac{p_{n}(r+1)+p_{n-1}}{q_{n}(r+1)+q_{n-1}}-\frac{p_{n}r+p_{n-1}}{q_{n} r+q_{n-1}}
=\frac{p_{n}q_{n-1}-q_{n}p_{n-1}}{\left(q_{n}r+q_{n-1}\right)\left(q_{n}(r+1)+q_{n-1}\right)} \\
&=\frac{1}{\left(q_{n}r+q_{n-1}\right)\left(q_{n}(r+1)+q_{n-1}\right)}
}
\end{equation}
(in the last step, we used~(\ref{Kh(8)}) together with the odd parity of $n$).
At the same time, we have
\begin{equation}\label{odhad zdola}
\frac{p}{q}-\frac{p_{n}r+p_{n-1}}{q_{n}r+q_{n-1}}
=\frac{p(q_{n}r+q_{n-1})-q(p_{n} r+p_{n-1})}{q\left(q_{n}r+q_{n-1}\right)}
\geq\frac{1}{q\left(q_{n}r+q_{n-1}\right)}\,.
\end{equation}
Combining estimates (\ref{odhad shora}) and (\ref{odhad zdola}), we get
\begin{equation*}\label{odhad q}
q_{n} (r+1)+q_{n-1}<q\,.
\end{equation*}
Therefore, considering $p'=p_{n} (r+1)+p_{n-1}$ and $q'=q_{n} (r+1)+q_{n-1}$, we conclude that $\frac{p}{q}<\alpha$ contradicts (\ref{rth below}). Hence $\frac{p}{q}$ is not a $\BLDA(1)$ to $\alpha$.
\end{proof}

Theorem~\ref{Thm.1} with Observation~\ref{Obs.l'<l} has the following corollary.

\begin{corollary}\label{Coro.1}
For all $\ell\geq1$, every $\BLDA(\ell)$ and $\BUDA(\ell)$ to $\alpha$ is a convergent or a semiconvergent of $\alpha$.
\end{corollary}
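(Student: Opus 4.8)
The plan is to obtain this statement as an immediate consequence of Theorem~\ref{Thm.1} and Observation~\ref{Obs.l'<l}, by reducing the case of arbitrary $\ell$ to the already--treated case $\ell=1$. Since Theorem~\ref{Thm.1} characterizes best one--sided approximations of the first kind as convergents or semiconvergents, it suffices to show that every $\BLDA(\ell)$ (respectively $\BUDA(\ell)$) to $\alpha$ is also a $\BLDA(1)$ (respectively $\BUDA(1)$) to $\alpha$.

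I would split the argument according to the value of $\ell$. For $\ell=1$ there is nothing to reduce: the claim is exactly the content of Theorem~\ref{Thm.1}. For $\ell\geq2$, I would invoke Observation~\ref{Obs.l'<l} with the choice $\ell'=1$, which satisfies $\ell'<\ell$ and $\ell'\in\mathbb{N}$. That observation guarantees that a $\BLDA(\ell)$ to $\alpha$ is a $\BLDA(\ell')$ to $\alpha$ for every $\ell'<\ell$, and in particular a $\BLDA(1)$ to $\alpha$; the analogous implication holds for upper approximations. Applying Theorem~\ref{Thm.1} to this first--kind approximation then yields that $\frac{p}{q}$ is a convergent or a semiconvergent of $\alpha$, which is the desired conclusion.

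There is essentially no genuine obstacle here, since the corollary is a formal combination of two previously established results; the only point requiring a moment of care is the boundary case $\ell=1$, where Observation~\ref{Obs.l'<l} offers no smaller admissible index $\ell'$ and the conclusion must instead be read off directly from Theorem~\ref{Thm.1}. Once this case distinction is made explicit, the argument is complete, and it applies verbatim to both the lower and the upper variants.
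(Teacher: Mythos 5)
Your proposal is correct and follows exactly the route the paper intends: the corollary is stated there as an immediate consequence of Theorem~\ref{Thm.1} combined with Observation~\ref{Obs.l'<l}, which is precisely your reduction of an arbitrary $\BLDA(\ell)$ or $\BUDA(\ell)$ to the first--kind case. Your explicit handling of the boundary case $\ell=1$ is a harmless (and welcome) extra precision that the paper leaves implicit.
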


In the next step, we find a sufficient condition for best one--sided approximations of the second kind.

\begin{theorem}\label{Thm.2}
Every convergent and semiconvergent of $\alpha$ is a best lower or upper approximation of the 2nd kind to $\alpha$.
\end{theorem}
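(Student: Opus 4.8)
The plan is to apply the criterion of Proposition~\ref{Prop.crit} to the set of convergents and semiconvergents, after reducing the monotonicity to a single telescoping identity. By Corollary~\ref{Coro.1} every $\BLDA(2)$ to $\alpha$ is a convergent or semiconvergent, and by Observation~\ref{floorceil} it satisfies $\frac pq\le\alpha$; hence every $\BLDA(2)$ lies in the set $\mathcal{S}_L$ of all convergents and semiconvergents of $\alpha$ that are $\le\alpha$, so $\mathcal{S}_L$ meets the hypothesis of Proposition~\ref{Prop.crit}(i). With $\ell=2$ the criterion there says that proving every element of $\mathcal{S}_L$ is a $\BLDA(2)$ reduces to showing that for distinct $\frac pq,\frac{p'}{q'}\in\mathcal{S}_L$ with $q'\le q$ one has $q\alpha-p<q'\alpha-p'$. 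Since every convergent/semiconvergent is $\le\alpha$ or $\ge\alpha$, and the upper statement (every such fraction $\ge\alpha$ is a $\BUDA(2)$) follows by the completely analogous argument via Proposition~\ref{Prop.crit}(ii), I would treat only the lower case in detail.

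By Proposition~\ref{Prop.monotonie}, $\mathcal{S}_L$ consists exactly of the even-order convergents together with the semiconvergents interpolating between consecutive ones, and, ordered by value, they form an increasing chain
\[\frac{p_0}{q_0}<\cdots<\frac{p_{m-1}s+p_{m-2}}{q_{m-1}s+q_{m-2}}<\cdots\le\alpha,\qquad m\ \text{even},\ 0\le s\le a_m,\]
where successive even blocks are glued at the shared convergents $\frac{p_m}{q_m}$. The denominator $q_{m-1}s+q_{m-2}$ is strictly increasing in $s$ and jumps up again across each block boundary, so the denominators along the chain are \emph{strictly} increasing, hence pairwise distinct. Consequently $q'\le q$ with $\frac{p'}{q'}\ne\frac pq$ forces $q'<q$, so $\frac{p'}{q'}$ precedes $\frac pq$ in the chain, and it remains to prove that $q\alpha-p$ is strictly \emph{decreasing} along the chain.

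The key point is that each step from an element of the chain to its successor adds a fixed pair $(p_j,q_j)$ with $j$ odd to numerator and denominator: the within-block step $s\mapsto s+1$ adds $(p_{m-1},q_{m-1})$ (with $m-1$ odd), while the boundary step from $\frac{p_m}{q_m}$ to $\frac{p_m+p_{m+1}}{q_m+q_{m+1}}$ adds $(p_{m+1},q_{m+1})$ (with $m+1$ odd). Writing such a consecutive pair as $\frac PQ$ and $\frac{P+p_j}{Q+q_j}$, we obtain
\[\bigl((Q+q_j)\alpha-(P+p_j)\bigr)-\bigl(Q\alpha-P\bigr)=q_j\alpha-p_j<0,\]
the sign being negative because $\frac{p_j}{q_j}$ is an odd-order convergent lying strictly above $\alpha$ (Proposition~\ref{Prop.monotonie}; strictness holds since these $j$ are non-terminal indices, as $a_m$ or $a_{m+2}$ exists). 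Thus every step strictly decreases $q\alpha-p$, which is precisely the inequality required by Proposition~\ref{Prop.crit}(i). This proves the lower case, and the upper case follows verbatim with the decreasing chain of odd-order convergents and the quantity $p-q\alpha$.

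I expect the most delicate part to be the bookkeeping behind the ``each successor adds $(p_j,q_j)$'' claim: one must verify that the monotone blocks of Proposition~\ref{Prop.monotonie} glue into a single increasing chain exactly at the convergents $\frac{p_m}{q_m}$, with no gaps or overlaps, so that the within-block and between-block transitions both acquire the stated uniform form. A minor separate point is the rational case, where the continued fraction is finite and its last convergent equals $\alpha$: that terminal fraction contributes $q\alpha-p=0$, while every other element of $\mathcal{S}_L$ is strictly below $\alpha$ and hence has $q'\alpha-p'>0$, so the strict monotonicity, and therefore the conclusion, persist.
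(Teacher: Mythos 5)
Your proposal is correct and follows essentially the same route as the paper: both reduce the claim via Proposition~\ref{Prop.crit} to strict monotonicity of $q\alpha-p$ along the denominator-ordered chain of lower (resp.\ upper) convergents and semiconvergents, and both verify it by observing that consecutive elements differ by adding an odd- (resp.\ even-) indexed convergent pair $(p_j,q_j)$, so each step changes the quantity by $q_j\alpha-p_j<0$ --- your telescoping identity is exactly the paper's simplification of inequality~(\ref{podminka.2-}) to $\alpha q_n-p_n<0$, and your gluing of blocks at shared convergents is the paper's remark that $r=a_{n+1}$ coincides with the next block's $r=0$. Your explicit treatment of the terminal convergent of a rational $\alpha$ is a small refinement the paper leaves implicit, but it does not change the argument.
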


\begin{proof}
From Corollary~\ref{Coro.1} we obtain that the only possible candidates for best one--sided approximations of the second kind to $\alpha$ are the fractions
\begin{equation}\label{candidate}
\frac{p_{n}r+p_{n-1}}{q_{n}r+q_{n-1}}
\end{equation}
where $n\in\mathbb{N}_0$ and $r\in\{0,1,\ldots,a_{n+1}-1\}$.
Furthermore, with regard to Proposition~\ref{Prop.monotonie}, number $n$ takes odd values for $\BLDA(2)$ and even values for $\BUDA(2)$. Let us focus on odd $n$; the case of even $n$ is similar.

We will use Proposition~\ref{Prop.crit}(i) where $\mathcal{S}_L=\left\{\frac{p_{n}r+p_{n-1}}{q_{n}r+q_{n-1}}\;:\;\mbox{$n$ is odd}\right\}$. Our goal is to show that all elements of $\mathcal{S}_L$ are $\BLDA(2)$ to $\alpha$. With regard to condition~(\ref{Crit.below}), we will prove that if we arrange the elements of $\mathcal{S}_L$ in a sequence with growing denominators, then the quantities
\begin{equation}\label{quant}
\left(q_{n}r+q_{n-1}\right)\left(\alpha-\frac{p_{n}r+p_{n-1}}{q_{n}r+q_{n-1}}\right)
\end{equation}
strictly decrease.

For a given $n$, the denominators $q_{n}r+q_{n-1}$ obviously grow as $r$ grows from $0$ to $a_{n+1}-1$. Furthermore, for the choice $r=a_{n+1}$ we have $q_{n}a_{n+1}+q_{n-1}=q_{n+1}=q_{n+2}\cdot0+q_{n+1}$. In other words, taking $r=a_{n+1}$ for a given $n$ is equivalent to increasing $n$ by $2$ (i.e., to the next odd value) and taking $r=0$. Consequently, if we arrange the elements of $\mathcal{S}_L$ according to their denominators, then any two consecutive elements can be written as
\begin{equation*}
\frac{p_{n}r+p_{n-1}}{q_{n}r+q_{n-1}} \quad\mbox{and}\quad \frac{p_{n}(r+1)+p_{n-1}}{q_{n}(r+1)+q_{n-1}}
\end{equation*}
for some odd $n$ and $r\in\{0,1,\ldots,a_{n+1}-1\}$.
The monotony of the quantities~(\ref{quant}) in terms of the denominators $q_{n}r+q_{n-1}$ can be thus verified by proving the inequality
\begin{equation}\label{podminka.2-}
\fl \left(q_{n}(r+1)+q_{n-1}\right)\left(\alpha-\frac{p_{n}(r+1)+p_{n-1}}{q_{n}(r+1)+q_{n-1}}\right)<\left(q_{n}r+q_{n-1}\right)\left(\alpha-\frac{p_{n}r+p_{n-1}}{q_{n}r+q_{n-1}}\right)
\end{equation}
for every odd $n$ and $r\in\{0,1,\ldots,a_{n+1}-1\}$.
A straightforward manipulation leads to a simplification of~(\ref{podminka.2-}) to
\begin{equation}\label{podminka.2-.}
\alpha q_{n}-p_{n}<0\,.
\end{equation}
Since $n$ is odd, we have $\frac{p_{n}}{q_{n}}>\alpha$ (see Prop.~\ref{Prop.monotonie}); thus inequality~(\ref{podminka.2-.}) holds true.
\end{proof}

Theorem~\ref{Thm.2} together with Observation~\ref{Obs.l'<l} for $\ell=2$ and $\ell'=1$ imply:
\begin{corollary}\label{Coro.2}
Every convergent or a semiconvergent of $\alpha$ is either a $\BLDA(1)$ or a $\BUDA(1)$ to $\alpha$.
\end{corollary}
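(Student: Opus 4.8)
The plan is to obtain the statement purely by composing two results already established, without any fresh estimate. First I would fix an arbitrary convergent or semiconvergent $\frac{p}{q}$ of $\alpha$ and apply Theorem~\ref{Thm.2}, which asserts that every such fraction is a $\BLDA(2)$ or a $\BUDA(2)$ to $\alpha$. This already pins $\frac{p}{q}$ down as a best one--sided approximation of the \emph{second} kind, and it records on which side of $\alpha$ the fraction lies (lower or upper), a distinction I will want to carry through to the end.

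The second and final step is to descend from the second kind to the first kind. Here I would invoke Observation~\ref{Obs.l'<l} with $\ell=2$ and $\ell'=1$. That observation says precisely that a best lower (respectively, upper) approximation of the $\ell$-th kind is also a best lower (respectively, upper) approximation of every $\ell'$-th kind with $\ell'<\ell$, and crucially it preserves the lower/upper label. Thus a $\BLDA(2)$ becomes a $\BLDA(1)$ and a $\BUDA(2)$ becomes a $\BUDA(1)$; in either case $\frac{p}{q}$ is a $\BLDA(1)$ or a $\BUDA(1)$, which is exactly the assertion of the corollary.

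Since the argument is a two-line chaining of cited facts, I do not expect any genuine obstacle. The only point deserving a moment's attention is bookkeeping: one must check that both branches of the conclusion of Theorem~\ref{Thm.2} are covered, i.e.\ that Observation~\ref{Obs.l'<l} applies uniformly whether $\frac{p}{q}$ emerged as a lower or as an upper approximation. Because Observation~\ref{Obs.l'<l} is stated symmetrically for the two sides, no separate case analysis is required, and the corollary follows at once.
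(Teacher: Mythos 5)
Your proposal is correct and matches the paper exactly: the paper derives Corollary~\ref{Coro.2} precisely by combining Theorem~\ref{Thm.2} with Observation~\ref{Obs.l'<l} for $\ell=2$ and $\ell'=1$, as you do. Your extra remark that the observation preserves the lower/upper label is a valid and harmless bit of bookkeeping already built into its statement.
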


Now we are ready to give a complete description of the set of best lower and upper approximations, both of the first and the second kind:

\begin{theorem}\label{Thm.12}
Let $\alpha=[a_0;a_1a_2,a_3,\ldots]\in\mathbb{R}$. For every $n\in\mathbb{N}_0$, let $\frac{p_n}{q_n}$ be the $n$-th convergent of $\alpha$.
\begin{itemize}
\item[(i)] The set of best lower approximations of the 1st kind to $\alpha$ is equal to the set of best lower approximations of the 2nd kind to $\alpha$. Both the sets consist of fractions
\begin{equation}\label{pq12}
\frac{p_{n}r+p_{n-1}}{q_{n} r+q_{n-1}} \qquad (0\leq r< a_{n+1})
\end{equation}
where $n$ is odd.
\item[(ii)] The set of best upper approximations of the 1st kind to $\alpha$ is equal to the set of best upper approximations of the 2nd kind to $\alpha$. Both the sets consist of fractions (\ref{pq12}) for an even $n$, except for the pair $(n,r)=(0,0)$.
\end{itemize}
\end{theorem}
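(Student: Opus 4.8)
The plan is to combine the one--way inclusions already established into a chain of set inclusions that closes up on itself, forcing the four sets to coincide. First I would translate the geometric picture of Proposition~\ref{Prop.monotonie} into the parametrization used in the statement. Taking index $m=n+1$ in the second part of Proposition~\ref{Prop.monotonie}, the fractions $\frac{p_n r+p_{n-1}}{q_n r+q_{n-1}}$ with $0\le r<a_{n+1}$ increase for odd $n$ and decrease for even $n$. Consequently, for odd $n$ they run through the even-order convergents (produced as the $r=0$ terms) together with the intermediate fractions between consecutive even convergents, all of which are $\le\alpha$; while for even $n$, after discarding $(n,r)=(0,0)$ (which only yields the symbol $\frac{p_{-1}}{q_{-1}}=\frac10$), they run through the odd-order convergents and the intermediate fractions between them, all $\ge\alpha$. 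Thus the fractions (\ref{pq12}) with $n$ odd are precisely the convergents and semiconvergents that do not exceed $\alpha$, and those with $n$ even (minus the excluded pair) are precisely the convergents and semiconvergents that are at least $\alpha$.

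Next I would note that the sign condition in Definition~\ref{Def. rth} pins down which of the two families a fraction can belong to: every $\BLDA(\ell)$ satisfies $\frac{p}{q}\le\alpha$ and every $\BUDA(\ell)$ satisfies $\frac{p}{q}\ge\alpha$ (compare Observation~\ref{floorceil}). Hence a convergent or semiconvergent below $\alpha$ can only ever be a lower approximation, and one above $\alpha$ only an upper approximation, so the two cases (i) and (ii) do not interfere.

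The core of the argument is then a sandwich. For part (i), Theorem~\ref{Thm.2} (more precisely, the computation in its proof carried out with $\mathcal{S}_L$ equal to the set of fractions (\ref{pq12}) with $n$ odd) shows that every such fraction is a $\BLDA(2)$; Observation~\ref{Obs.l'<l} with $\ell=2$, $\ell'=1$ gives $\BLDA(2)\subseteq\BLDA(1)$; and Corollary~\ref{Coro.1} together with the inequality $\frac{p}{q}\le\alpha$ shows that every $\BLDA(1)$ is a convergent or semiconvergent not exceeding $\alpha$, i.e.\ a fraction (\ref{pq12}) with $n$ odd. Writing $\mathcal{F}_{\mathrm{odd}}$ for that family, I obtain
\[
\mathcal{F}_{\mathrm{odd}}\subseteq\BLDA(2)\subseteq\BLDA(1)\subseteq\mathcal{F}_{\mathrm{odd}},
\]
so all the inclusions are equalities, which is exactly (i). Part (ii) is identical after replacing ``odd'' by ``even'', ``lower'' by ``upper'', $\mathcal{S}_L$ by $\mathcal{S}_U$, and removing the pair $(0,0)$.

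The logical chaining is routine; I expect the only real care to be needed in two places. The first is the bookkeeping of the family identification: one must verify that each even-order convergent is produced exactly once (as the $r=0$ term of the appropriate odd $n$, and never as an $r=a_{n+1}$ term, which is excluded), so that (\ref{pq12}) enumerates the lower convergents and semiconvergents with no repetition or omission, and likewise in the upper case, where the single degenerate symbol $\frac10$ must be deleted. The second and genuine obstacle is the terminating case $\alpha\in\mathbb{Q}$: there the continued fraction is finite, the quantity $a_{n+1}$ in (\ref{pq12}) is undefined for the last index, and the final convergent equals $\alpha$, hence satisfies both $\frac{p}{q}\le\alpha$ and $\frac{p}{q}\ge\alpha$ and could a priori qualify simultaneously as a lower and an upper approximation. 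I would treat this case separately, using the uniqueness of the continued-fraction representation to fix the side on which the terminal convergent lies and checking directly from Definition~\ref{Def. rth} which competitors of denominator at most $q_N$ it must beat; the chaining above then applies verbatim to the remaining, non-terminal fractions.
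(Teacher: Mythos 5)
Your proposal is correct and takes essentially the same route as the paper: the paper's proof is precisely your sandwich, citing Corollary~\ref{Coro.1} for necessity (every one--sided best approximation of kind $\ell\in\{1,2\}$ has form~(\ref{pq12})), Theorem~\ref{Thm.2} with Corollary~\ref{Coro.2} for sufficiency, and Proposition~\ref{Prop.monotonie} to sort odd $n$ into lower and even $n$ into upper approximations. Your added care about once-only enumeration of the fractions~(\ref{pq12}) and about the terminal convergent when $\alpha\in\mathbb{Q}$ is a refinement the paper's terser write-up glosses over, not a different argument.
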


\begin{proof}
Let $\ell$ be $1$ or $2$. Corollary~\ref{Coro.2} implies that every best lower or upper approximations of the $\ell$-th kind to $\alpha$ has form~(\ref{pq12}). Conversely, each fraction~(\ref{pq12}) is a $\BLDA(\ell)$ or a $\BUDA(\ell)$ to $\alpha$ due to Corollary~\ref{Coro.1}. Finally, from Proposition~\ref{Prop.monotonie} we obtain that odd numbers $n$ in~(\ref{pq12}) correspond to $\BLDA(\ell)$, and even numbers $n$ correspond to $\BUDA(\ell)$.
\end{proof}

Let us compare our results on best one--sided approximations to classical results on ``double--sided'' best approximations. It is well known that:
\begin{itemize}
\item The set of best approximations of the first kind to an $\alpha$ consists of all convergents of $\alpha$ (except for $\frac{p_0}{q_0}$ when $\alpha=a_0+\frac{1}{2}$) and \emph{some} semiconvergents. \cite[Thm.~15]{Kh64}
\item Fraction $\frac{p}{q}$ is a best approximation of the second kind to the number $\alpha$ if and only if $\frac{p}{q}$ is a convergent of $\alpha$, except for $\frac{p_0}{q_0}$ when $\alpha=a_0+\frac{1}{2}$. \cite[Thm.~16 and 17]{Kh64}
\end{itemize}
By contrast, as we found in Theorem~\ref{Thm.12}, the set of all one--sided best approximations of the first kind and the set of all one--sided best approximations of the second kind both coincide with the set of all convergents and semiconvergents of $\alpha$.
We illustrate the result with an example.

\begin{example}\label{Ex.srovnani}
If $\alpha=\pi=[3;7,15,1,292,1,\ldots]$, fractions~(\ref{pq12}) for $n=0$ and $r=1,\ldots,7$ are
\begin{equation}\label{ex.pi}
\frac{4}{1}\,,\quad\frac{7}{2}\,,\quad\frac{10}{3}\,,\quad\frac{13}{4}\,,\quad\frac{16}{5}\,,\quad\frac{19}{6}\,,\quad\frac{22}{7}\,.
\end{equation}
Using Definition~\ref{Def.DA}, it is easy to check that among the fractions listed in~(\ref{ex.pi}), only $\frac{13}{4},\,\frac{16}{5},\,\frac{19}{6},\,\frac{22}{7}$ are best approximations to $\pi$ of the 1st kind, and only the fraction $\frac{22}{7}$ is a best approximation to $\pi$ of the second kind. But all the fractions~(\ref{ex.pi})---and no other with denominator $q\leq7$---are $\BUDA(1)$. The same is true for $\BUDA(2)$.
\end{example}

\begin{remark}\label{Geometric}
Best lower and upper Diophantine approximations of the 2nd kind (which coincide with one--sided approximations of the 1st kind due to Theorem~\ref{Thm.12}) have a nice geometric interpretation, see Figure~\ref{Fig.geometry}.
Consider the graph of linear function $f(x)=\alpha x$ and a grid of points $[x,y]$ with integer coordinates.
For each point $[x,y]$ of the grid, one can measure its vertical distance to the graph of $f(x)$.
Then a fraction $\frac{p}{q}$ for $p\in\mathbb{Z}$, $q\in\mathbb{N}$ is a $\BLDA(2)$ to $\alpha$ if and only if $[q,p]$ lies on or below the graph of $f(x)$ and its vertical distance to the graph of $f(x)$ is smaller than the vertical distance between the graph and any other point $[q',p']$ of the grid lying on or below the graph and having coordinate $0<q'\leq q$.
In other words, the point $[q,p]$ has smaller vertical distance from the graph of $f(x)$ than any other point $[q',p']\neq[0,0]$ of the grid lying in the triangle with vertices $[0,0]$, $[q,0]$ and $[q,f(q)]$.
\begin{figure}[h]
\begin{center}
\begin{tabular}{ccc}
\includegraphics[width=0.3\columnwidth]{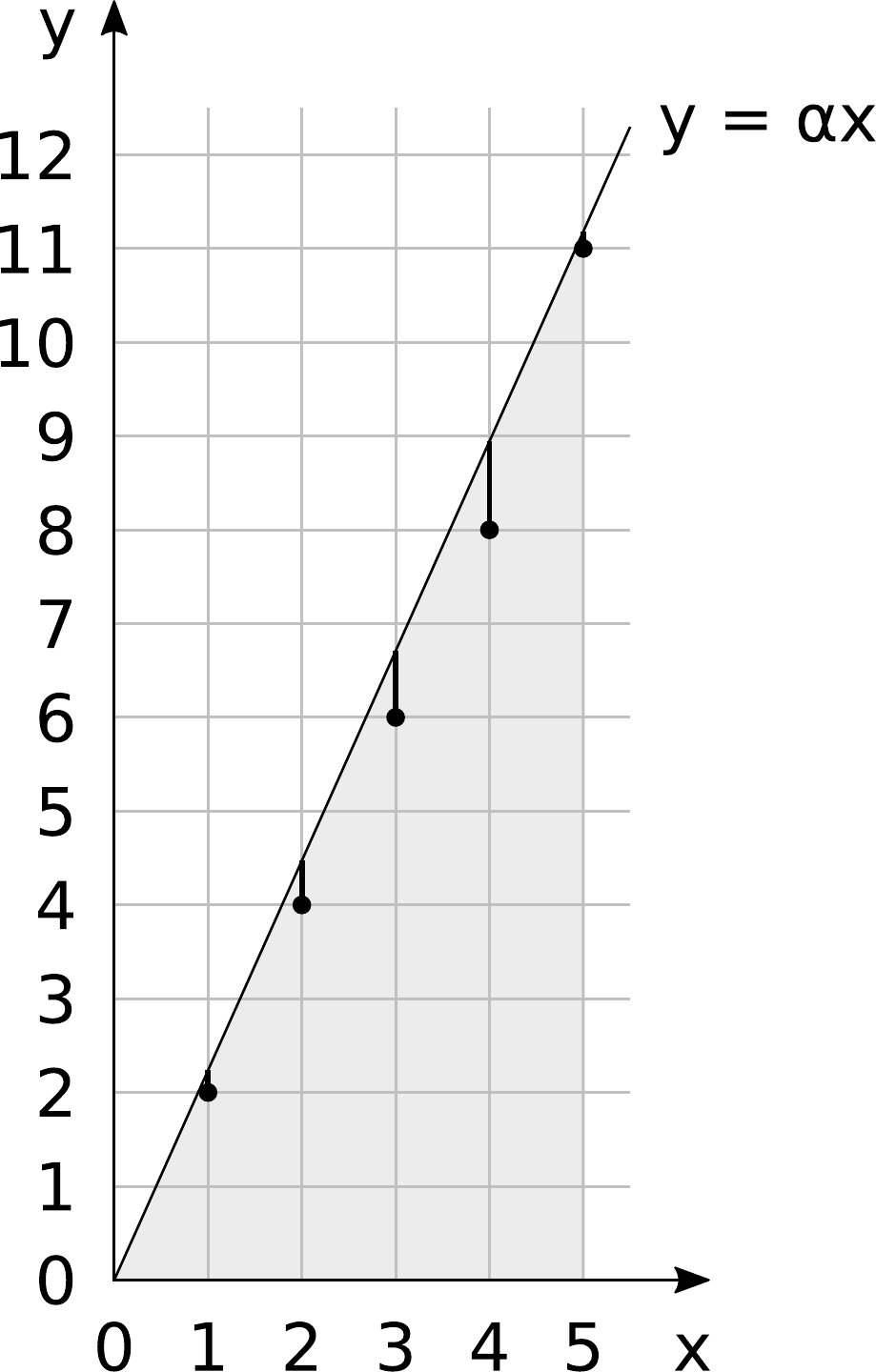}
& \qquad\qquad\qquad &
\includegraphics[width=0.3\columnwidth]{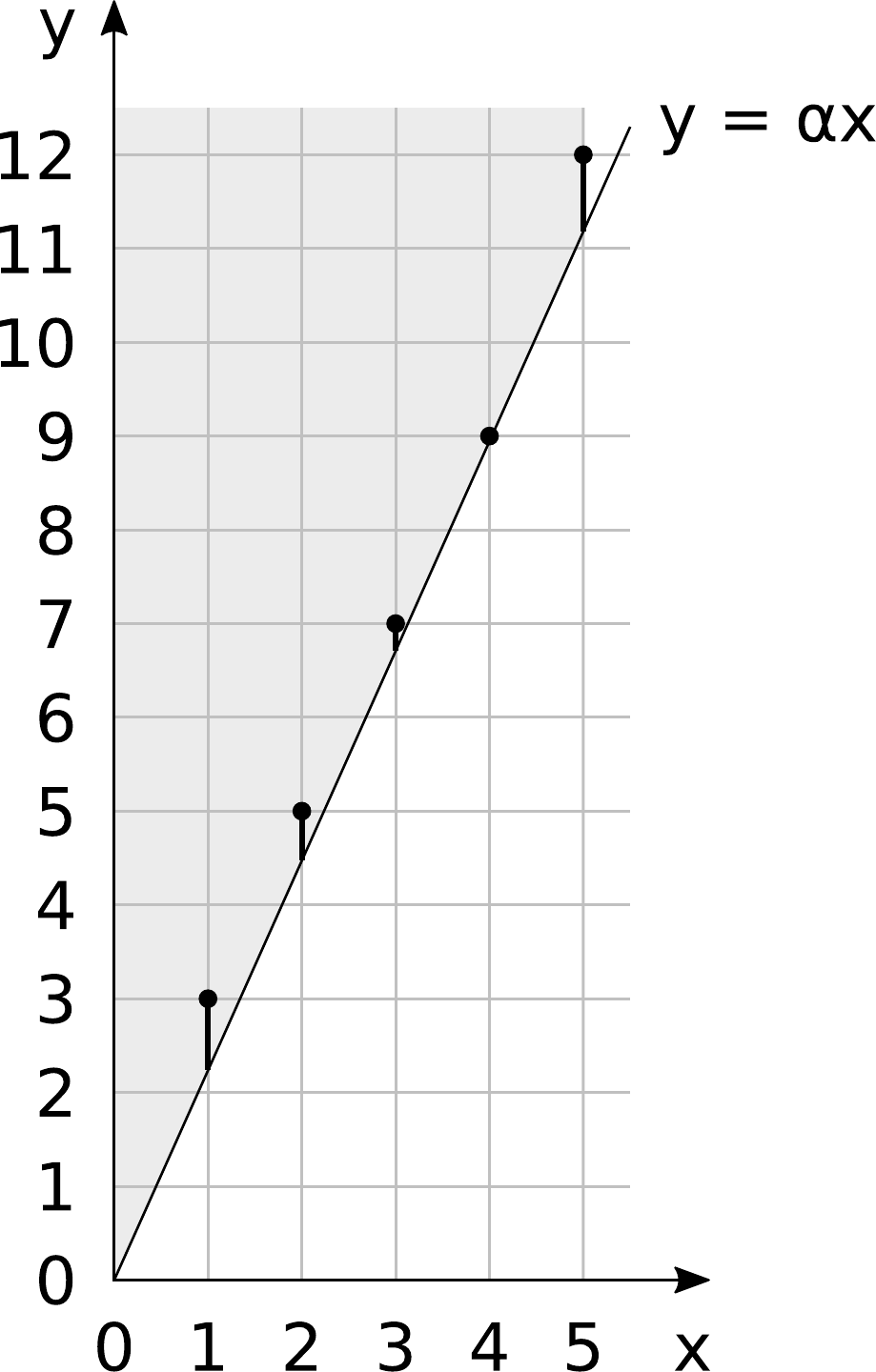}
\smallskip
\\
(a)\; & & (b)\;
\end{tabular}
\end{center}
\caption{Geometric meaning of best lower \emph{(a)} and upper \emph{(b)} approximations of the 2nd kind (plotted for $\alpha=\sqrt{5}$). Regarding $\BLDA(2)$ (Figure (a)), take grid points $[q,p]\in\mathbb{N}^2$ that lie immediately below the graph of $f(x)=\alpha x$, i.e., $[1,2]$, $[2,4]$, $[3,6]$, $[4,8]$ and $[5,11]$. Their vertical distances to the graph are approximately $0.24$, $0.47$, $0.71$, $0.94$ and $0.18$, respectively. The minimal distance with respect to $0<q'\leq q$ is thus attained for $q=1$ and $q=5$. Hence $\frac{2}{1}$ and $\frac{11}{5}$ are the only $\BLDA(2)$ to $\alpha=\sqrt{5}$ among all fractions having denominators $q\leq5$. For $\BUDA(2)$ (Figure (b)), consider grid points that lie immediately above the graph of $f(x)=\alpha x$. Their vertical distances to the graph are approximately $0.76$, $0.53$, $0.29$, $0.06$ and $0.82$, respectively. The minimality with respect to $0<q'\leq q$ is attained for values $q=1,2,3,4$ and \emph{not} for $q=5$; hence $\frac{3}{1}$, $\frac{5}{2}$, $\frac{7}{3}$ and $\frac{9}{4}$ are $\BUDA(2)$ to $\sqrt{5}$, while $\frac{12}{5}$ is not.}
\label{Fig.geometry}
\end{figure}

Similarly, $\frac{p}{q}$ is a $\BUDA(2)$ to $\alpha$ if and only if $[q,p]$ lies on or above the graph of $f(x)$ and its vertical distance to the graph of $f(x)$ is than the vertical distance between the graph and any other point $[q',p']$ of the grid lying on or above the graph and having coordinate $0<q'\leq q$.
\end{remark}

\begin{remark}
We were notified by a referee that the results presented in this section are to some extent known among number theorists in connection with other problems. This concerns in particular the structure of best one--sided approximations of the second kind. But it is not simple to find them with proofs in the literature.

Furthermore, there exists an alternative characterization of the set of best upper approximations of the second kind, which was obtained by Y.~Y.~Finkelshtein within the context of so-called Klein polygons\footnote{We thank the referee for pointing our attention to that result.}. The approximations are expressed in terms of \emph{reduced regular continued fractions}, instead of ordinary regular continued fractions that are used in the present paper. However, the only accessible material on Finkelshtein's result regarding $\BUDA(2)$ seems to be a short note~\cite{Fi93} where no proofs are provided.
\end{remark}

\section{Approximations of the third kind}\label{Section:3rd}

\begin{theorem}\label{Thm.3} We have:
\begin{itemize}
\item[(i)] Every best lower approximation of the 3rd kind to $\alpha$ is an even--order convergent of $\alpha$.
\item[(ii)] Every best upper approximation of the 3rd kind to $\alpha$ is either $\frac{\lceil\alpha\rceil}{1}$ or an odd--order convergent of $\alpha$.
\end{itemize}
\end{theorem}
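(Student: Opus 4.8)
The plan is to combine the reduction already available from Corollary~\ref{Coro.1} and Theorem~\ref{Thm.12} with the minimality criterion of Proposition~\ref{Prop.crit}. By Corollary~\ref{Coro.1} every $\BLDA(3)$ is a convergent or semiconvergent, so Theorem~\ref{Thm.12}(i) restricts the candidates to the fractions $\frac{p_nr+p_{n-1}}{q_nr+q_{n-1}}$ with $n$ odd and $0\le r<a_{n+1}$; the value $r=0$ reproduces precisely the even-order convergents $\frac{p_{n-1}}{q_{n-1}}$. Thus part~(i) is equivalent to showing that no \emph{proper} semiconvergent ($1\le r\le a_{n+1}-1$) is a $\BLDA(3)$. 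Writing $\theta_m:=q_m\alpha-p_m$, I would study the quantity from~(\ref{rth below}) with $\ell=3$,
\[
f(r):=\left(q_nr+q_{n-1}\right)^2\left(\alpha-\frac{p_nr+p_{n-1}}{q_nr+q_{n-1}}\right)=\left(q_nr+q_{n-1}\right)\left(r\theta_n+\theta_{n-1}\right),
\]
and compare $f(r)$ with $f(0)$, the value attached to the even convergent $\frac{p_{n-1}}{q_{n-1}}$.

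Rewriting~(\ref{Kh(8)}) as $q_{n-1}\theta_n-q_n\theta_{n-1}=(-1)^n$ and using $\theta_n<0$ for odd $n$, a direct manipulation yields
\[
f(r)-f(0)=r\left(1-|\theta_n|\left(q_nr+2q_{n-1}\right)\right),
\]
so the bracket vanishes at $r_1=\frac{1}{q_n|\theta_n|}-\frac{2q_{n-1}}{q_n}$. Here Proposition~\ref{Prop.Ha} gives $q_n|\theta_n|=q_n^2\left|\alpha-\frac{p_n}{q_n}\right|=\bigl([a_{n+1};a_{n+2},\dots]+[0;a_n,\dots,a_1]\bigr)^{-1}$, while~(\ref{Kh Thm6}) gives $\frac{q_{n-1}}{q_n}=[0;a_n,\dots,a_1]$; hence
\[
r_1=[a_{n+1};a_{n+2},\dots]-[0;a_n,\dots,a_1].
\]
Since $[a_{n+1};a_{n+2},\dots]\in(a_{n+1},a_{n+1}+1)$ and $[0;a_n,\dots,a_1]\in(0,1)$, I get $r_1>a_{n+1}-1$. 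Therefore $f(r)-f(0)>0$ for every integer $r$ with $1\le r\le a_{n+1}-1$, and since $\frac{p_{n-1}}{q_{n-1}}$ has strictly smaller denominator and strictly smaller value of $f$, Proposition~\ref{Prop.crit}(i) shows the proper semiconvergent is not a $\BLDA(3)$. This proves~(i).

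For~(ii) I would run the same argument for even $n$ with $g(r):=\left(q_nr+q_{n-1}\right)^2\left(\frac{p_nr+p_{n-1}}{q_nr+q_{n-1}}-\alpha\right)=-f(r)$; now $\theta_n>0$, the sign in~(\ref{Kh(8)}) is reversed, and one obtains $g(r)-g(0)=r\left(1-\theta_n\left(q_nr+2q_{n-1}\right)\right)$ with the same root $r_1>a_{n+1}-1$, so for even $n\ge2$ the $r=0$ fraction, i.e.\ the odd convergent $\frac{p_{n-1}}{q_{n-1}}$, serves as a witness. The one genuine obstacle is the case $n=0$: there the fraction $\frac{p_{-1}}{q_{-1}}$ is undefined, so the natural witness disappears. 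Instead I would compare with $(n,r)=(0,1)$, which equals $\frac{a_0+1}{1}=\frac{\lceil\alpha\rceil}{1}$; a short computation gives $g(r)-g(1)=(r-1)\left(1-\theta_0(r+1)\right)$, and since $\frac{1}{\theta_0}=[a_1;a_2,\dots]>a_1$ one has $1-\theta_0(r+1)>0$ for all $2\le r\le a_1-1$. Hence the only $n=0$ semiconvergent that survives as a possible $\BUDA(3)$ is $\frac{\lceil\alpha\rceil}{1}$, which together with the odd convergents gives exactly the two alternatives in~(ii). The main work, and the only delicate point, is the localization $r_1>a_{n+1}-1$ via Proposition~\ref{Prop.Ha} and~(\ref{Kh Thm6}), together with the separate handling of $n=0$.
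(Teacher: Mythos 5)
Your proposal is correct and follows essentially the same route as the paper's proof: after the identical reduction of candidates via Theorem~\ref{Thm.12} and Observation~\ref{Obs.l'<l}, your comparison of $f(r)$ with $f(0)$ (and of $g(r)$ with $g(1)$ when $n=0$) against the witnesses $\frac{p_{n-1}}{q_{n-1}}$ and $\frac{\lceil\alpha\rceil}{1}$, with the threshold $r_1=[a_{n+1};a_{n+2},\ldots]-[0;a_n,\ldots,a_1]$ obtained from Proposition~\ref{Prop.Ha} and~(\ref{Kh Thm6}), is exactly the paper's inequality chain (\ref{k odd})--(\ref{k odd a1}) and (\ref{k0r equiv}) in rearranged form. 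The one nuance is that for a terminating expansion $\alpha=[a_0;a_1]$ one has $[a_1;a_2,\ldots]=a_1$, so your bracket $1-\theta_0(r+1)$ can vanish at $r=a_1-1$, giving $g(r)=g(1)$ rather than strict inequality---but equality already violates the strict requirement in~(\ref{rth above}), so the conclusion stands (the paper correspondingly proves only the non-strict~(\ref{k0r equiv})).
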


\begin{remark}
The first version of Theorem~\ref{Thm.3} appeared in~\cite[Prop.~3.5 and 3.6]{ET17}, but the proof there turns out to be mistaken\footnote{The argument given in~\cite[Prop.~3.5]{ET17} relies on Lemma~3.4 ibidem. However, there is a misprint in \cite[Lemma~3.4]{ET17}, namely, the term $a_n$ should read as $a_{n+1}$ everywhere in its formulation and proof ($4$ occurrences). The dependence on a mistaken lemma makes the proof of~\cite[Prop.~3.5]{ET17} invalid.}.
\end{remark}

\begin{proof}[Proof of Theorem~\ref{Thm.3}]
(i)\; Let $\alpha=[a_0;a_1,a_2,\ldots]\in\mathbb{R}$. Due to Theorem~\ref{Thm.12} and Observation~\ref{Obs.l'<l}, each $\BLDA(3)$ to $\alpha$ is given as
\begin{equation}\label{semic}
\frac{p_{n}r+p_{n-1}}{q_{n}r+q_{n-1}}
\end{equation}
for some odd $n\in\mathbb{N}$ and $r$ satisfying $0\leq r< a_{n+1}$. We shall show that if fraction~(\ref{semic}) is a semiconvergent, i.e., if $r$ satisfies $0<r<a_{n+1}$, then (\ref{semic}) is not a $\BLDA(3)$ to $\alpha$.
To prove this, we will demonstrate that fraction~(\ref{semic}) with $0<r<a_{n+1}$ violates~(\ref{rth below}) with $\ell=3$ for the choice $p'=p_{n-1}$, $q'=q_{n-1}$. That is, we shall verify inequality
\begin{equation}\label{k odd}
(q_{n}r+q_{n-1})^2\left(\alpha-\frac{p_{n}r+p_{n-1}}{q_{n}r+q_{n-1}}\right)\geq q_{n-1}^2\left(\alpha-\frac{p_{n-1}}{q_{n-1}}\right) 
\end{equation}
for every $r=1,\ldots,a_{n+1}-1$.
It is easy to transform~(\ref{k odd}) into 
\begin{equation*}\label{k odd r}
r\leq\frac{2q_{n}q_{n-1}\alpha-q_{n}p_{n-1}-q_{n-1}p_{n}}{q_{n}(p_{n}-q_{n}\alpha)} \qquad\mbox{for every $r=1,\ldots,a_{n+1}-1$},
\end{equation*}
which is further equivalent to
\begin{equation}\label{k odd a}
a_{n+1}-1\leq\frac{2q_{n} q_{n-1}\alpha-q_{n} p_{n-1}-q_{n-1}p_{n}}{q_{n}(p_{n}-q_{n}\alpha)}\;.
\end{equation}
From identity~(\ref{Kh(8)}) we obtain that the numerator on the right hand side of~(\ref{k odd a}) is equal to $2q_{n} q_{n-1}\alpha-2q_{n-1}p_{n}+1$. Therefore, (\ref{k odd a}) can be rewritten as
\begin{equation}\label{k odd a1}
a_{n+1}-1\leq-2\frac{q_{n-1}}{q_{n}}+\frac{1}{q_{n}(p_{n}-q_{n}\alpha)}\;.
\end{equation}
Now we express the right hand side of~(\ref{k odd a1}) in terms of $\alpha=[a_0;a_1,a_2,\ldots]$. Equations~(\ref{Kh Thm6}) and~(\ref{Ha.(2.2)}) together with the identity $[a_{n};a_{n-1},\ldots,a_1]^{-1}=[0;a_{n},a_{n-1},\ldots,a_1]$ yield that we can write the right hand side of~(\ref{k odd a1}) as
\begin{equation*}
-2[0;a_{n},a_{n-1},\ldots,a_1]+[a_{n+1};a_{n+2},\ldots]+[0;a_{n},a_{n-1},\ldots,a_1]\,.
\end{equation*}
Hence (\ref{k odd a1}) has the form
\begin{equation*}
a_{n+1}-1\leq-[0;a_{n},a_{n-1},\ldots,a_1]+a_{n+1}+[0;a_{n+2},a_{n+3},\ldots]\,.
\end{equation*}
This inequality can be simplified to
\begin{equation*}
[0;a_{n},a_{n-1},\ldots,a_1]\leq[1;a_{n+2},a_{n+3},\ldots]\,,
\end{equation*}
which is valid for any $\alpha=[a_0;a_1,a_2,\ldots]$.

(ii)\; We start again from Theorem~\ref{Thm.12} and Observation~\ref{Obs.l'<l}, which imply that each $\BUDA(3)$ to $\alpha$ has the form~(\ref{semic}) for some even $n\in\mathbb{N}_0$ and $0\leq r< a_{n+1}$. Our goal is to prove that the semiconvergents, which correspond to $0<r<a_{n+1}$, are either equal to $\frac{\lceil\alpha\rceil}{1}$ or violate the definition of $\BUDA(3)$. The proof falls into three cases: \{$n$ is even nonzero\}; \{$n=0$ and $r>1$\}; \{$n=0$ and $r=1$\}.

$\bullet$\; Let $n$ be even positive integer. We prove that each fraction~(\ref{semic}) with $0<r<a_{n+1}$ violates~(\ref{rth above}) with $\ell=3$ and $p'=p_{n-1}$, $q'=q_{n-1}$. Similarly as in part~(i), but this time for an even $n$, we verify the inequality
\begin{equation}\label{k even a}
(q_{n}r+q_{n-1})^2\left(\frac{p_{n}r+p_{n-1}}{q_{n}r+q_{n-1}}-\alpha\right)\geq q_{n-1}^2\left(\frac{p_{n-1}}{q_{n-1}}-\alpha\right)
\end{equation}
for every $r=1,\ldots,a_{n+1}-1$. We again transform~(\ref{k even a}) into
\begin{equation}\label{k even a1}
a_{n+1}-1\leq-2\frac{q_{n-1}}{q_{n}}+\frac{1}{q_{n}(q_{n}\alpha-p_{n})}
\end{equation}
and subsequently rewrite~(\ref{k even a1}) in the form
\begin{equation*}\label{k even ineq}
[0;a_{n},a_{n-1},\ldots,a_1]\leq[1;a_{n+2},a_{n+3},\ldots]\,,
\end{equation*}
which is valid for any $\alpha=[a_0;a_1,a_2,\ldots]$.

$\bullet$\; If $n=0$ and $r>1$, we will show that fraction~(\ref{semic}), i.e.,
\begin{equation*}
\frac{p_0 r+p_{-1}}{q_0 r+q_{-1}}=\frac{a_0 r+1}{1\cdot r+0}=\frac{a_0 r+1}{r}\,,
\end{equation*}
violates condition~(\ref{rth above}) with $\ell=3$ for the choice $p'=a_0+1$, $q'=1$. To prove this we shall verify inequality
\begin{equation*}\label{k0r}
r^{2}\left(\frac{a_0 r+1}{r}-\alpha\right)\geq1^{2}\left(\frac{a_0+1}{1}-\alpha\right),
\end{equation*}
which is equivalent to
\begin{equation}\label{k0r equiv}
(r-1)\left[(r+1)(a_0-\alpha)+1\right]\geq0\,.
\end{equation}
Since $1<r\leq a_1-1$ and $\alpha\leq a_0+\frac{1}{a_1}$, we have $(r+1)(a_0-\alpha)+1\geq a_1\cdot\frac{-1}{a_1}+1\geq0$. So~(\ref{k0r equiv}) holds.

$\bullet$\; Finally, consider (\ref{semic}) for $n=0$ and $r=1$, i.e.,
\begin{equation}\label{sem01}
\frac{p_0\cdot1+p_{-1}}{q_0\cdot1+q_{-1}}=\frac{a_0 r+1}{1\cdot 1+0}=\frac{a_0+1}{1}\,.
\end{equation}
If $n=0$, the case $r=1$ is possible only when $a_1>1$ (see~(\ref{semiconvergents})). Hence necessarily $\alpha=[a_0;a_1,\ldots]=a_0+\frac{1}{a_1+\cdots}\notin\mathbb{Z}$. In this case semiconvergent~(\ref{sem01}) is equal to $\frac{\lceil\alpha\rceil}{1}$.
\end{proof}

It is easy to check that the necessary condition from Theorem~\ref{Thm.3} is not sufficient.
We formulate a necessary and sufficient condition in Proposition~\ref{Prop.3} below.

\begin{proposition}\label{Prop.3}
Let $n$ be a positive integer and $\alpha=[a_0;a_1,a_2,\ldots]$. 
Then we have:
\begin{itemize}
\item[(i)] A convergent $\frac{p_n}{q_n}$ is the best lower approximation of the 3rd kind to $\alpha$ if and only if $n$ is even and 
\begin{equation}\label{cond.3}
\fl [a_{k+1};a_{k+2},\ldots]+[0;a_k,a_{k-1},\ldots,a_1]<[a_{n+1};a_{n+2},\ldots]+[0;a_n,a_{n-1},\ldots,a_1]
\end{equation}
holds for all $k=n-2,n-4,\ldots,2,0$.
\item[(ii)] A convergent $\frac{p_n}{q_n}$ is the best upper approximation of the 3rd kind to $\alpha$ if and only if $n$ is odd and (\ref{cond.3}) holds for all $k=n-2,n-4,\ldots,3,1$.
\end{itemize}
\end{proposition}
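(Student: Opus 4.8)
The plan is to combine the necessary parity conditions from Theorem~\ref{Thm.3} with the comparison criterion of Proposition~\ref{Prop.crit} and the closed form of Proposition~\ref{Prop.Ha}. For brevity write $D_m=[a_{m+1};a_{m+2},\ldots]+[0;a_m,a_{m-1},\ldots,a_1]$, so that Proposition~\ref{Prop.Ha} reads $\alpha-p_m/q_m=(-1)^m/(q_m^2 D_m)$, and condition~(\ref{cond.3}) is precisely the inequality $D_k<D_n$. The whole proof then reduces to re-expressing the defining inequalities of Definition~\ref{Def. rth} through these quantities.

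For part (i), Theorem~\ref{Thm.3}(i) says that any $\BLDA(3)$ is an even-order convergent, which makes the parity condition necessary and lets me take $\mathcal{S}_L=\{p_0/q_0,p_2/q_2,p_4/q_4,\ldots\}$ in Proposition~\ref{Prop.crit}(i). By Proposition~\ref{Prop.monotonie} these lie below $\alpha$ and have strictly increasing denominators, so for even $k\ne n$ the relation $q_k\le q_n$ is equivalent to $k<n$. Hence $p_n/q_n$ ($n$ even) is a $\BLDA(3)$ iff $q_n^2(\alpha-p_n/q_n)<q_k^2(\alpha-p_k/q_k)$ for all even $k<n$. Since $(-1)^k=(-1)^n=1$, Proposition~\ref{Prop.Ha} turns each side into $1/D_m$, and the inequality collapses to $D_k<D_n$, i.e.\ to~(\ref{cond.3}) for $k=0,2,\ldots,n-2$. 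This direction is a clean substitution with no side conditions.

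Part (ii) follows the same pattern with odd indices but carries one extra ingredient. Theorem~\ref{Thm.3}(ii) forces $n$ to be odd and forces the candidate set $\mathcal{S}_U=\{\lceil\alpha\rceil/1\}\cup\{p_1/q_1,p_3/q_3,\ldots\}$ in Proposition~\ref{Prop.crit}(ii). Comparing $p_n/q_n$ with the odd convergents of smaller denominator reproduces, via Proposition~\ref{Prop.Ha} (now with $(-1)^n=-1$, so $q_m^2(p_m/q_m-\alpha)=1/D_m$), exactly the inequalities $D_k<D_n$ for odd $k=1,3,\ldots,n-2$; this yields the ``only if'' direction at once. The subtlety, which I expect to be the main obstacle, is the extra competitor $\lceil\alpha\rceil/1$: it has denominator $1$, so it is always eligible in Proposition~\ref{Prop.crit}(ii), yet it does not appear in~(\ref{cond.3}).

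For the ``if'' direction I must therefore show that~(\ref{cond.3}) already forces $p_n/q_n$ to beat $\lceil\alpha\rceil/1$, i.e.\ $1/D_n<\lceil\alpha\rceil-\alpha$. When $a_1=1$ the fraction $\lceil\alpha\rceil/1$ coincides with $p_1/q_1$ and nothing new arises. When $a_1>1$ I plan to verify directly that $1/D_1<\lceil\alpha\rceil-\alpha$: writing $\lceil\alpha\rceil-\alpha=1-[0;a_1,a_2,\ldots]$ and $D_1=[a_2;a_3,\ldots]+1/a_1$, a short manipulation reduces the claim to $[a_2;a_3,\ldots]>1$, which holds since for irrational $\alpha$ the tail is at least $a_2\ge1$ and strictly exceeds $1$ whenever $a_2=1$. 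For $n=1$ this is exactly what is needed, as the range of $k$ is empty. For odd $n\ge3$, condition~(\ref{cond.3}) with $k=1$ gives $D_1<D_n$, whence $1/D_n<1/D_1<\lceil\alpha\rceil-\alpha$ by transitivity, and the troublesome comparison is subsumed. With both directions established, Proposition~\ref{Prop.crit}(ii) completes the characterization. Throughout I take $\alpha$ irrational so that all referenced convergents exist; the rational case merely truncates the index ranges and is governed by the same inequalities.
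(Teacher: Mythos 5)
Your proposal is correct and takes essentially the same route as the paper's own proof: Theorem~\ref{Thm.3} fixes the candidate sets, Proposition~\ref{Prop.crit} reduces Definition~\ref{Def. rth} to comparisons within those sets, Proposition~\ref{Prop.Ha} converts $q_m^2\left|\alpha-\frac{p_m}{q_m}\right|$ into the reciprocal of the quantity in~(\ref{cond.3}), and the extra competitor $\frac{\lceil\alpha\rceil}{1}$ is absorbed via the $k=1$ comparison. Your reduction of $1/D_1<\lceil\alpha\rceil-\alpha$ to $[a_2;a_3,\ldots]>1$ is merely an algebraic variant of the paper's inequality $(a_1-1)\bigl[(a_1+1)(\alpha-a_0)-1\bigr]\geq0$, and your explicit case split $a_1=1$ versus $a_1>1$ is, if anything, slightly more careful at $n=1$ than the paper's treatment.
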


\begin{proof}
(i)\; From Theorem~\ref{Thm.3} we obtain that the only possible candidates for $\BLDA(3)$ to $\alpha$ are even--order convergents of $\alpha$. Therefore, setting $\mathcal{S}_L=\left\{\frac{p_n}{q_n}\,:\,\mbox{$n$ is even}\right\}$ in Proposition~\ref{Prop.crit}(i), we infer that $\frac{p_n}{q_n}$ for an even $n$ is a $\BLDA(3)$ if and only if
\begin{equation*}
q_n^2\left(\alpha-\frac{p_n}{q_n}\right)<q_k^2\left(\alpha-\frac{p_k}{q_k}\right) \qquad \mbox{for all even $k<n$}.
\end{equation*}
This and formula~(\ref{Ha.(2.2)}) imply 
$$
\frac{1}{[a_{n+1};a_{n+2},\ldots]+[0;a_n,a_{n-1},\ldots,a_1]}<\frac{1}{[a_{k+1};a_{k+2},\ldots]+[0;a_k,a_{k-1},\ldots,a_1]}
$$
for all even $k<n$, and criterion (i) follows immediately.

(ii)\; From Theorem~\ref{Thm.3} we get the set of candidates for $\BUDA(3)$ to $\alpha$ in the form $\mathcal{S}_U=\left\{\frac{p_n}{q_n}\,:\,\mbox{$n$ is odd}\right\}\cup\left\{\frac{\lceil\alpha\rceil}{1}\right\}$. Proposition~\ref{Prop.crit}(ii) then implies that $\frac{p_n}{q_n}$ with an odd $n$ is a $\BUDA(3)$ if and only if
\numparts
\begin{eqnarray}
q_n^2\left(\alpha-\frac{p_n}{q_n}\right)<q_k^2\left(\alpha-\frac{p_k}{q_k}\right) \quad \mbox{for all odd $k<n$} \label{BUDA3 conv} \\
\mbox{and}\quad
q_n^2\left(\frac{p_n}{q_n}-\alpha\right)<1^2\left(\frac{\lceil\alpha\rceil}{1}-\alpha\right). \label{BUDA3 1}
\end{eqnarray}
\endnumparts
Now we will show that (\ref{BUDA3 conv}) implies (\ref{BUDA3 1}). To prove this, we will demonstrate that
\begin{equation}\label{1ceil}
1^2\left(\frac{\lceil\alpha\rceil}{1}-\alpha\right)\geq q_1^2\left(\frac{p_1}{q_1}-\alpha\right).
\end{equation}
Since $\frac{p_1}{q_1}=\frac{a_0a_1+1}{a_1}$, we easily rewrite~(\ref{1ceil}) as
\begin{equation*}
a_0+1-\alpha\geq a_1(a_0a_1+1-a_1\alpha)\,,
\end{equation*}
which is equivalent to
\begin{equation}\label{1ceil2}
(a_1-1)\bigl[(a_1+1)(\alpha-a_0)-1\bigr]\geq0\,.
\end{equation}
In order to prove~(\ref{1ceil2}), we estimate
\begin{equation}\label{estim}
\alpha-a_0=\frac{1}{a_1+\frac{1}{a_2+\cdots}}>\frac{1}{a_1+1}\,,
\end{equation}
where the term $\frac{1}{a_2+\cdots}$ is smaller than $1$, because an expansion $\alpha=[a_0;a_1,1]$ with the last term $a_2=1$ is excluded, see Section~\ref{Sect.CF}.
With regard to~(\ref{estim}), inequality~(\ref{1ceil2}) is true, so (\ref{1ceil}) is verified.
We conclude that $\frac{p_n}{q_n}$ for an odd $n$ is a $\BUDA(3)$ if and only if (\ref{BUDA3 conv}) holds true. Finally, (\ref{BUDA3 conv}) corresponds to~(\ref{cond.3}) by virtue of~(\ref{Ha.(2.2)}); see part (i) of the proof.
\end{proof}

The following proposition will be used in a physical application in Section~\ref{Application}.
\begin{proposition}\label{Prop.Lebesgue}
Almost all $\alpha\in\mathbb{R}$ have infinitely many $\BLDA(3)$ and infinitely many $\BUDA(3)$. 
\end{proposition}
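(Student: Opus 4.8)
The plan is to recast ``$\alpha$ has infinitely many $\BLDA(3)$'' as ``a certain sequence attached to $\alpha$ has infinitely many records'', and then to secure those records for almost every $\alpha$ via an unboundedness property of partial quotients coming from ergodicity of the Gauss map. Throughout I restrict to irrational $\alpha$, which is harmless since $\mathbb{Q}$ is Lebesgue--null (and rationals have only finitely many convergents in any case). Write $\alpha=[a_0;a_1,a_2,\ldots]$ and put
\[
D_n:=[a_{n+1};a_{n+2},\ldots]+[0;a_n,a_{n-1},\ldots,a_1],
\]
so that Proposition~\ref{Prop.Ha} reads $q_n^2\bigl(\alpha-\tfrac{p_n}{q_n}\bigr)=\tfrac{(-1)^n}{D_n}$. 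By Proposition~\ref{Prop.3}(i), the convergent $\tfrac{p_n}{q_n}$ with even $n$ is a $\BLDA(3)$ exactly when $D_k<D_n$ for all even $k<n$, i.e.\ exactly when $D_n$ is a strict left-to-right maximum of the even-indexed subsequence $(D_{2m})_{m\ge0}$; likewise, by part~(ii), the odd convergents that are $\BUDA(3)$ are exactly the strict left-to-right maxima of $(D_{2m+1})_{m\ge0}$. Thus it suffices to prove that each of these two subsequences of $(D_n)$ has infinitely many records for a.e.\ $\alpha$.

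The reduction to partial quotients is then elementary: since $[a_{n+1};a_{n+2},\ldots]\in[a_{n+1},a_{n+1}+1)$ and $[0;a_n,\ldots,a_1]\in[0,1)$, one has the crude bound $a_{n+1}\le D_n<a_{n+1}+2$. Hence $(D_{2m})_m$ is unbounded iff the odd-indexed partial quotients $a_1,a_3,a_5,\ldots$ are unbounded, and $(D_{2m+1})_m$ is unbounded iff the even-indexed partial quotients $a_2,a_4,\ldots$ are unbounded. Whenever a subsequence is unbounded its running maximum tends to $\infty$, so it strictly increases infinitely often, and each strict increase furnishes a new record; thus unboundedness of the relevant parity class already yields infinitely many $\BLDA(3)$ (resp.\ $\BUDA(3)$), the convergents being pairwise distinct because $\alpha$ is irrational. (The extra $\BUDA(3)$ candidate $\tfrac{\lceil\alpha\rceil}{1}$ from Theorem~\ref{Thm.3} only helps and may be ignored.)

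It remains to show that, for almost every $\alpha$, \emph{both} the odd- and the even-indexed partial quotients are unbounded; this is the heart of the matter, and the only place where measure theory enters. I would argue via the Gauss map $T(x)=\{1/x\}$ on $(0,1)$, which preserves the Gauss measure $\mu$ (equivalent to Lebesgue) and satisfies $a_{n+1}(\alpha)=a_1(T^n\{\alpha\})$, with $\{x:a_1(x)\ge N\}=(0,1/N]$ of positive measure. Since $T$ is mixing, $T^2$ is ergodic, so Birkhoff's theorem applied to $T^2$ and the indicator of $(0,1/N]$ shows that for a.e.\ $x$ the orbit $(T^{2m}x)_m$ visits $(0,1/N]$ infinitely often; intersecting over all $N\in\mathbb{N}$ gives $\limsup_m a_{2m+1}(\alpha)=\infty$ for a.e.\ $\alpha$. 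Running the same argument along the orbit of $T\alpha$ (again of full measure, as $T$ is nonsingular) yields $\limsup_m a_{2m+2}(\alpha)=\infty$ a.e.; intersecting the two full-measure sets completes the proof.

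The main obstacle is precisely this parity-refined unboundedness: the naive fact $\limsup_n a_n=\infty$ a.e.\ is \emph{not} enough, because the leading term of $D_{2m}$ is $a_{2m+1}$ and that of $D_{2m+1}$ is $a_{2m+2}$, so one genuinely needs arbitrarily large partial quotients in each residue class modulo $2$ separately. This is why the ergodicity of $T^2$ (equivalently, a Borel--Bernstein argument applied along arithmetic progressions of indices) is required, rather than a statement about $(a_n)$ as a whole; everything else is bookkeeping on top of Proposition~\ref{Prop.3} and the identity~(\ref{Ha.(2.2)}).
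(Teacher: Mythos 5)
Your proof is correct, and its combinatorial skeleton coincides with the paper's: the paper likewise combines Proposition~\ref{Prop.3} with the identity~(\ref{Ha.(2.2)}) to identify the $\BLDA(3)$ (resp.\ $\BUDA(3)$) among convergents with strict left-to-right records of the quantity you call $D_n$ along even (resp.\ odd) indices --- there it is denoted $P(n)$, and the argument is run in the contrapositive: if $\alpha$ has only finitely many $\BLDA(3)$, the running maximum $H(n)$ of the $P$-values is eventually constant, so $P(n)$ is bounded along even $n$, forcing the partial quotients in one parity class to be bounded (your crude bound $a_{n+1}\le D_n<a_{n+1}+2$ is the same observation). Where you genuinely diverge is the measure-theoretic core. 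The paper disposes of it by citation: the set $M_j$ of $\alpha$ whose partial quotients along a fixed parity class are bounded by $j$ is Lebesgue-null by Theorem~2.1 of~\cite{Ha17}, whence $\mathbb{M}\subset\mathbb{Q}\cup\bigcup_{j}M_j$ is null. You instead prove the complementary statement directly --- $\limsup_m a_{2m+1}=\limsup_m a_{2m+2}=\infty$ almost everywhere --- via mixing of the Gauss map $T$ (hence ergodicity of $T^2$), Birkhoff's theorem applied to the indicators of $(0,1/N]$, and nonsingularity of $T$ to transport the full-measure set to the other parity; all of these steps are sound, and you correctly flag that mixing (or a Borel--Bernstein argument along arithmetic progressions, which is essentially the content of the cited theorem) is needed because ergodicity of $T$ alone does not give ergodicity of $T^2$. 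What each approach buys: the paper's is shorter given the reference; yours is self-contained modulo standard ergodic theory of the Gauss map, is framed positively rather than contrapositively, and in fact yields slightly more, namely that each parity class sees partial quotients $\ge N$ with positive asymptotic frequency. Your observation that the parity-refined unboundedness is the real point --- the naive $\limsup_n a_n=\infty$ a.e.\ being insufficient since the leading term of $D_{2m}$ is $a_{2m+1}$ --- matches exactly the constraint the paper's proof is built around.
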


\begin{proof}
We prove that the set $\mathbb{M}=\{ \alpha ; \alpha \ \mbox{has finitely many }\BLDA(3)\}$ has zero Lebesgue measure.
Let $\alpha=[a_0;a_1,a_2,a_3,\ldots]\in\mathbb{M}\backslash\mathbb{Q}$ be fixed. For every even $n$, let us set
\begin{equation}\label{Pn}
P(n)=[a_{n+1};a_{n+2},\ldots]+[0;a_n,a_{n-1},\ldots,a_1]
\end{equation}
and define $H(n)=\max\{P(n-2),P(n-4),\ldots,P(2),P(0)\}$.
We have immediately that $H(n)\geq H(n-2)$ for every even $n\in\mathbb{N}$.

If $n$ has property $H(n)>H(n-2)$, then~(\ref{cond.3}) holds for all $k=n-2,n-4,\ldots,2,0$; thus $\frac{p_n}{q_n}$ is a $\BLDA(3)$ to $\alpha$ due to Proposition~\ref{Prop.3}.
Our assumption $\alpha\in\mathbb{M}$ implies that there are only finitely many such $n$. Therefore, the sequence $\{H(2n)\}_{n=1}^\infty$ is eventually constant.

Consequently, values $P(n)$ for even $n$ are bounded. From~(\ref{Pn}) we obtain that every $\alpha\in\mathbb{M}\backslash\mathbb{Q}$ has bounded terms at odd positions of its continued fraction expansion.
Hence $\mathbb{M}\subset \mathbb{Q}\cup\bigcup_{j=1}^\infty M_j$ where for each $j\in\mathbb N$ we have 
$M_j=\{ \alpha ;\alpha =[a_0;a_1,a_2,\dots ], a_{2k}<j\ \mbox{for every}\ k\in\mathbb N\}$. 
But Theorem 2.1 from \cite{Ha17}---see also Remark 2.1 and paragraph after Remark 2.1 of \cite{Ha17}---yields that $M_j$ has zero Lebesgue measure for every $j\in\mathbb N$. Hence the set $\mathbb M$ has zero Lebesgue measure. 

The proof that  almost all $\alpha\in\mathbb{R}$ have infinitely many $\BUDA(3)$  is similar. 
\end{proof}

\section{Approximations of the third kind for quadratic numbers}\label{Sect.quadratic}

The criterion derived in Proposition~\ref{Prop.3} is particularly convenient if the continued fraction of $\alpha$ has some regular structure. A prominent example are eventually periodic continued fractions,
\begin{equation}\label{quadratic alpha}
\alpha=[a_0;a_1,\ldots,a_m,\overline{a_{m+1},\ldots,a_{m+h}}]\,.
\end{equation}
Due to a classical result by Euler and Lagrange, periodic continued fractions correspond to quadratic irrational numbers, i.e., irrational roots of polynomials $x^2+ux+v$ with $u,v\in\mathbb{Q}$.

In this section, we apply Proposition~\ref{Prop.3} on a general quadratic irrational number $\alpha$ to find bounds on the number of its best upper and lower approximations of the third kind. In particular, we show that the set of $\BLDA(3)$ and the set of $\BUDA(3)$ cannot be both infinite.

\begin{theorem}\label{Thm.quadratic}
Let $\alpha$ be given as~(\ref{quadratic alpha})
for some non-negative integer $m$ and a positive integer $h$.

(i)\; If $m=0$; or $m$ is odd and $a_m<a_{m+h}$; or $m$ is even nonzero and $a_m>a_{m+h}$, then the number of best upper approximations of the 3rd kind to $\alpha$ is finite.

(ii)\; If $m$ is odd and $a_m>a_{m+h}$; or $m$ is even nonzero and $a_m<a_{m+h}$, then the number of best lower approximations of the 3rd kind to $\alpha$ is finite.

(iii)\; A quadratic irrational number cannot have infinitely many $\BLDA(3)$ and infinitely many $\BUDA(3)$ at the same time.
\end{theorem}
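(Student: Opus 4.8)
\textbf{The plan} is to apply the necessary-and-sufficient criterion of Proposition~\ref{Prop.3} to the eventually periodic expansion~(\ref{quadratic alpha}) and to exploit the periodicity of the tail $[a_{k+1};a_{k+2},\ldots]$ together with the eventual stabilization of the reversed finite part $[0;a_k,a_{k-1},\ldots,a_1]$. Recall that by Proposition~\ref{Prop.3}, the convergent $\frac{p_n}{q_n}$ with $n$ even (resp.\ odd) is a $\BLDA(3)$ (resp.\ $\BUDA(3)$) if and only if the quantity
\begin{equation*}
P(n)=[a_{n+1};a_{n+2},\ldots]+[0;a_n,a_{n-1},\ldots,a_1]
\end{equation*}
strictly exceeds $P(k)$ for all $k$ of the same parity with $k<n$. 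Thus the set of $\BLDA(3)$ (resp.\ $\BUDA(3)$) is infinite precisely when $P(n)$, restricted to even (resp.\ odd) indices, attains a new strict maximum infinitely often, i.e.\ when the sequence $\{P(n)\}$ along that parity class is not eventually bounded by a fixed value, or more precisely when it contains infinitely many strict records.

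\textbf{First I would} analyze the two ingredients of $P(n)$ separately. For an index $n\geq m$, the tail $[a_{n+1};a_{n+2},\ldots]$ is purely periodic and takes only finitely many values, cycling through $h$ possibilities determined by $n \bmod h$. The reversed part $[0;a_n,a_{n-1},\ldots,a_1]$, on the other hand, is a finite continued fraction whose length grows with $n$; as $n\to\infty$ along a fixed residue class modulo $2h$, the leading terms $a_n,a_{n-1},\ldots$ eventually become periodic from the front, so by the comparison rules of Proposition~\ref{Prop.comparison} and the convergence of continued fractions, $[0;a_n,a_{n-1},\ldots,a_1]$ converges to a well-defined limit as $n\to\infty$ within each residue class. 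Consequently $P(n)$ along each fixed residue class modulo $2h$ converges to a limit $P_\infty(n\bmod 2h)$, and the decisive question becomes whether $P(n)$ approaches its limit strictly from below (producing infinitely many records and hence infinitely many approximations) or from above / non-monotonically in a way that prevents infinitely many strict records.

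\textbf{The key step} is to identify which residue class carries the supremum and to determine the \emph{direction} of convergence of $P(n)$ to its limit, since that direction is exactly what~(\ref{cond.3}) tests. Here is where the hypotheses comparing $a_m$ with $a_{m+h}$ enter: the first place where the reversed finite continued fraction differs from its periodic limit is governed by the ``seam'' between the pre-period and the period, i.e.\ by how $a_m$ compares to $a_{m+h}$, and the parity of $m$ determines—via Proposition~\ref{Prop.comparison}—whether that discrepancy makes the finite approximant $[0;a_n,\ldots,a_1]$ lie above or below its limit. Combining this with the parity constraint of Proposition~\ref{Prop.3} (even $n$ for $\BLDA(3)$, odd $n$ for $\BUDA(3)$) yields, in each listed case, that $P(n)$ eventually approaches its supremum from the wrong side or is eventually dominated, so only finitely many strict records occur; this gives parts (i) and (ii). Part~(iii) should then follow because the same seam comparison cannot simultaneously force both the even-index and odd-index sequences to produce infinitely many records: the sign of the relevant discrepancy is a single algebraic condition, so it can make at most one parity class strictly increasing to its supremum infinitely often, while the complementary inequality rules out the other.

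\textbf{The main obstacle} I anticipate is making the convergence-direction argument fully rigorous rather than heuristic: one must pin down the sign of $P(n)-P_\infty$ for large $n$ in each residue class, which amounts to a careful application of Proposition~\ref{Prop.comparison}(i) to the reversed strings $(a_n,a_{n-1},\ldots,a_1)$ against their eventual periodic continuation, tracking how the parity of the position of first disagreement interacts with the parity of $n$ itself. The bookkeeping of these parities—period length $h$, pre-period length $m$, and the index $n$ all contributing mod-$2$ information—is delicate, and I would handle it by writing $n=m+jh+s$ and reducing every comparison to the fixed seam inequality between $a_m$ and $a_{m+h}$, so that all cases collapse to a single computation whose outcome is dictated by the two hypotheses $a_m\lessgtr a_{m+h}$ and the parity of $m$. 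Once that sign analysis is in place, parts (i)--(iii) follow by reading off which parity class can and cannot sustain infinitely many records.
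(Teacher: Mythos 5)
Your proposal is correct and essentially the paper's own argument: the paper likewise feeds the eventually periodic expansion into the record criterion of Proposition~\ref{Prop.3}, cancels the equal periodic tails $[a_{k+1};a_{k+2},\ldots]=[a_{k+2h+1};a_{k+2h+2},\ldots]$, and settles everything by precisely your ``seam'' comparison of the reversed strings $[0;a_k,\ldots,a_1]$ versus $[0;a_{k+2h},\ldots,a_1]$ through Proposition~\ref{Prop.comparison} (part (ii) when $m=0$, part (i) with first disagreement $a_m$ vs.\ $a_{m+h}$ at position $k-m+1$ when $m>0$), with the same parity bookkeeping, and obtains (iii) from the complementarity of the cases in (i) and (ii). The only difference is that the paper skips your limiting-value layer entirely: instead of tracking the direction of convergence of $P(n)$ along residue classes modulo $2h$, it tests condition~(\ref{cond.3}) at the single index $k=n-2h$ and shows it fails for every $n>m+2h$ of the relevant parity, which already leaves only finitely many candidates.
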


\begin{proof}
(i)\; Due to Theorem~\ref{Thm.3}, each $\BUDA(3)$ to $\alpha$ is either $\frac{\lceil\alpha\rceil}{1}$ or an odd--order convergent of $\alpha$. We will show that for any odd $n>m+2h$, $\frac{p_n}{q_n}$ is not a $\BUDA(3)$ to $\alpha$.

Let us thus consider an arbitrary odd $n>m+2h$. According to Proposition~\ref{Prop.3}, $\frac{p_n}{q_n}$ is a $\BUDA(3)$ only if~(\ref{cond.3}) holds for every odd $k<n$. We take in particular $k=n-2h$ (one can take also $k=n-h$ if $h$ is even) and rewrite~(\ref{cond.3}) in terms of $(n,k)=(k+2h,k)$. We obtain
\begin{equation}\label{condition quadratic}
\fl [a_{k+1};a_{k+2},\ldots]+[0;a_k,a_{k-1},\ldots,a_1]<[a_{k+2h+1};a_{k+2h+2},\ldots]+[0;a_{k+2h},a_{k+2h-1},\ldots,a_1]\,.
\end{equation}
Since $k>m$ (recall that $n>m+2h$), we use the periodicity of representation~(\ref{quadratic alpha}) to conclude that $[a_{k+1};a_{k+2},\ldots]=[a_{k+2h+1};a_{k+2h+2},\ldots]$. This allows us to simplify condition~(\ref{condition quadratic}) to
\begin{equation}\label{cond. quadr.}
[0;a_k,a_{k-1},\ldots,a_1]<[0;a_{k+2h},a_{k+2h-1},\ldots,a_1]\,.
\end{equation}
Now we shall demonstrate that (\ref{cond. quadr.}) is violated in all the three cases from statement (i), i.e., \{$m=0$\}; \{$m$ is odd and $a_m<a_{m+h}$\}; \{$m$ is even nonzero and $a_m>a_{m+h}$\}.

$\bullet$\; If $m=0$, we have $[0;a_{k+2h},a_{k+2h-1},\ldots,a_1]=[0;a_{k},a_{k-1},\ldots,a_1,a_{h},\ldots,a_1,a_{h},\ldots,a_1]$. Then condition~(\ref{cond. quadr.}) takes the form
\begin{equation}\label{m0}
[0;a_k,a_{k-1},\ldots,a_1]<[0;a_{k},a_{k-1},\ldots,a_1,a_{h},\ldots,a_1,a_{h},\ldots,a_1]\,.
\end{equation}
Since $k$ is odd, inequality (\ref{m0}) is false in view of Proposition~\ref{Prop.comparison}(ii). Thus $\frac{p_n}{q_n}$ is not a $\BUDA(3)$ to $\alpha$.

$\bullet$\; Let $m>0$. Then we have
\begin{equation*}
[0;a_k,a_{k-1},\ldots,a_1]=[0;a_k,a_{k-1},\ldots,a_{m+1},a_m,a_{m-1},\ldots,a_1]
\end{equation*}
and
$$
[0;a_{k+2h},a_{k+2h-1},\ldots,a_1]=[0;a_{k},a_{k-1},\ldots,a_{m+1},a_{m+h},\ldots,a_{m+1},a_{m+h},\ldots,a_{m+1},a_m,\ldots,a_1]\,.
$$
Hence (\ref{cond. quadr.}) has the form
\begin{equation}\label{m>0}
\fl \eqalign{
[0;a_k,a_{k-1},\ldots,a_{m+1},a_m,a_{m-1},\ldots,a_1] \\
\qquad<[0;a_{k},a_{k-1},\ldots,a_{m+1},a_{m+h},\ldots,a_{m+1},a_{m+h},\ldots,a_{m+1},a_m,\ldots,a_1]\,.}
\end{equation}
Now if $m$ is odd and $a_m<a_{m+h}$, we have that $k-m+1$ is odd and $a_m<a_{m+h}$, thus (\ref{m>0}) is false by Proposition~\ref{Prop.comparison}(i). Similarly, if $m$ is even and $a_m>a_{m+h}$, we have that $k-m+1$ is even and $a_m>a_{m+h}$, so (\ref{m>0}) is again false. Therefore, in either case $\frac{p_n}{q_n}$ is not a $\BUDA(3)$ to $\alpha$.

(ii)\; The proof is similar to (i), with the main difference that we examine even $n>m-2h$, thus $k=n-2h$ is even. One proves that no convergent $\frac{p_n}{q_n}$ with $n>m+2h$ is a $\BLDA(3)$ to $\alpha$.

(iii)\; A quadratic irrational number has an eventually periodic continued fraction of form~(\ref{quadratic alpha}), so statements (i) and (ii) apply. The conditions listed in (i) and (ii) are complementary. As one of them is always satisfied, either the number of $\BLDA(3)$ to $\alpha$ or the number of $\BUDA(3)$ to $\alpha$ must be finite.
\end{proof}

\begin{proposition}\label{Prop.bound}
Let $\alpha=[a_0;a_1,\ldots,a_m,\overline{a_{m+1},\ldots,a_{m+h}}]$ for $m\in\mathbb N_0$ and $h\in\mathbb N$.

(i)\; If $\alpha$ has infinitely many $\BLDA(3)$, then $\alpha$ has at most $(1+\lceil m/2\rceil+h)$ $\BUDA(3)$.

(ii)\; If $\alpha$ has infinitely many $\BUDA(3)$, then $\alpha$ has at most $(\lfloor m/2\rfloor+h)$ $\BLDA(3)$.
\end{proposition}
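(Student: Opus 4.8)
The plan is to leverage the detailed structure already established in the proof of Theorem~\ref{Thm.quadratic}, where the relevant convergents were shown to stop being one--sided approximations once the index exceeds $m+2h$. The key observation is that Theorem~\ref{Thm.quadratic} already pins down, in each of the complementary cases, which side (lower or upper) can have infinitely many approximations and which side must be finite. Proposition~\ref{Prop.bound} now asks for the explicit numerical bound on the finite side. First I would recall from Theorem~\ref{Thm.3} that every $\BUDA(3)$ is either $\frac{\lceil\alpha\rceil}{1}$ or an odd-order convergent $\frac{p_n}{q_n}$, and every $\BLDA(3)$ is an even-order convergent. Then, assuming $\alpha$ has infinitely many $\BLDA(3)$, Theorem~\ref{Thm.quadratic}(i) tells us we are in one of its three listed cases, and the proof of that theorem showed that no odd-order convergent with $n>m+2h$ is a $\BUDA(3)$.

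The heart of the counting argument is therefore to enumerate the admissible indices. For part (i), the candidate $\BUDA(3)$ are $\frac{\lceil\alpha\rceil}{1}$ together with the odd-order convergents $\frac{p_n}{q_n}$ with $n\leq m+2h$ and $n$ odd. I would count the odd integers $n$ in the range $1\leq n\leq m+2h$: there are $\lceil (m+2h)/2\rceil = \lceil m/2\rceil + h$ of them (using that $2h$ is even, so it contributes exactly $h$ odd values). Adding the single extra candidate $\frac{\lceil\alpha\rceil}{1}$ yields the stated bound $1+\lceil m/2\rceil+h$. For part (ii), the candidate $\BLDA(3)$ are the even-order convergents $\frac{p_n}{q_n}$ with $n\leq m+2h$; since $n=0$ gives $\frac{p_0}{q_0}=\frac{a_0}{1}$ which is a $\BLDA(3)$ candidate as well, I would count the even integers in $0\leq n\leq m+2h$ and match this against $\lfloor m/2\rfloor+h$. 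Here I must be careful about whether $n=0$ is included, and whether the parity of $m$ and the precise threshold (strict versus non-strict) affect the count; the proof of Theorem~\ref{Thm.quadratic}(ii) examined even $n>m+2h$, confirming that only indices up to $m+2h$ survive.

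The main obstacle I anticipate is pinning down the exact count, including the correct treatment of the endpoint $n=m+2h$ and the low-index edge cases (such as $n=0$, or whether $\frac{\lceil\alpha\rceil}{1}$ coincides with some convergent). The parity of $m$ interacts with whether we count $\lceil m/2\rceil$ or $\lfloor m/2\rfloor$, so I would verify both parities separately to ensure the floor/ceiling functions come out correctly. In particular, the asymmetry between parts (i) and (ii)---namely the extra $+1$ in (i) and the ceiling versus floor---should be traceable to the presence of the additional candidate $\frac{\lceil\alpha\rceil}{1}$ on the upper side and to the differing parity of the surviving indices (odd for $\BUDA(3)$, even for $\BLDA(3)$). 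Once the index ranges are correctly identified, the counting itself is elementary arithmetic, so the whole proposition reduces to a careful bookkeeping exercise built on top of the already-proven Theorems~\ref{Thm.3} and~\ref{Thm.quadratic}.
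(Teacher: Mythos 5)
Your strategy is exactly the paper's: reduce to the candidate sets given by Theorem~\ref{Thm.3} together with the proof of Theorem~\ref{Thm.quadratic} (no convergent of order $n>m+2h$ survives on the finite side), then count admissible indices. Your count in part (i) is precisely the paper's computation: the odd $n$ in $[1,m+2h]$ number $\lceil (m+2h)/2\rceil=\lceil m/2\rceil+h$ (since $2h$ is even), plus one for the extra candidate $\frac{\lceil\alpha\rceil}{1}$, giving $1+\lceil m/2\rceil+h$. (Your worry about $\frac{\lceil\alpha\rceil}{1}$ coinciding with a convergent is harmless here: a coincidence only makes the ``at most'' bound slacker.)

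The edge case you flagged in part (ii) but did not settle --- whether $n=0$ is included --- is a genuine gap in your write-up, and it is also the weakest point of the paper's own proof. Counted honestly, the even integers in $[0,m+2h]$ number $\lfloor(m+2h)/2\rfloor+1=\lfloor m/2\rfloor+h+1$, one more than the stated bound, and $n=0$ cannot simply be dropped: $\frac{p_0}{q_0}=\frac{\lfloor\alpha\rfloor}{1}$ is \emph{always} a $\BLDA(3)$ by Observation~\ref{Obs.4first} (note that Proposition~\ref{Prop.3} is stated only for positive $n$, so it gives no license to exclude it). The paper's proof just asserts ``at most $\lfloor(m+2h)/2\rfloor$ possibilities,'' which in effect counts only the positive even indices; to genuinely reach $\lfloor m/2\rfloor+h$ one would have to rule out some even index, and the natural candidate, the endpoint $n=m+2h$, is not excluded by the periodicity argument of Theorem~\ref{Thm.quadratic}, which needs $k=n-2h>m$. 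So your bookkeeping, carried through, produces $\lfloor m/2\rfloor+h+1$ for part (ii): you either inherit the same off-by-one the paper glosses over, or you need an additional argument the paper does not supply. Part (i) escapes this because its low-index candidate $\frac{\lceil\alpha\rceil}{1}$ is accounted for explicitly by the ``$1+$'' term.
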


\begin{proof}
(i)\; We will apply Theorem~\ref{Thm.quadratic}. The case of infinitely many $\BLDA(3)$ to $\alpha$ corresponds to case (i) of Theorem~\ref{Thm.quadratic}. The proof of Theorem~\ref{Thm.quadratic}(i) then implies that every $\BUDA(3)$ to $\alpha$ is either $\frac{\lceil\alpha\rceil}{1}$ or a convergent $\frac{p_n}{q_n}$ of $\alpha$ for an odd $n\leq m+2h$. In total there are at most $1+\left\lceil\frac{m+2h}{2}\right\rceil$ possibilities.

(ii)\; Infinitely many $\BUDA(3)$ to $\alpha$ correspond to case (ii) of Theorem~\ref{Thm.quadratic}. So each $\BLDA(3)$ to $\alpha$ must be a convergent $\frac{p_n}{q_n}$ of $\alpha$ for an even $n\leq m+2h$. Hence we get at most $\left\lfloor\frac{m+2h}{2}\right\rfloor$ possibilities.
\end{proof}

\begin{remark}
The bounds on the number of $\BLDA(3)$ and $\BUDA(3)$ to $\alpha$ given in Proposition~\ref{Prop.bound} can be improved, but we will not go into detail for the sake of simplicity of the proof.
\end{remark}

\section{Approximations of the $\ell$-th kind for $\ell\geq4$}\label{Section:4th}

Theorem~\ref{Thm.3} together with Observation~\ref{Obs.l'<l} imply that every best lower or upper approximation of the $\ell$-th kind to $\alpha$ for $\ell\geq4$ is either a convergent of $\alpha$ or $\frac{\lceil\alpha\rceil}{1}$. Note at first that the sets of $\BLDA(\ell)$ and $\BUDA(\ell)$ to $\alpha$ are always nonempty:

\begin{observation}\label{Obs.4first}
For every $\ell\in\mathbb{N}$ and $\alpha\in\mathbb{R}$, $\frac{p_0}{q_0}=\frac{\lfloor\alpha\rfloor}{1}$ is a $\BLDA(\ell)$ to $\alpha$ and $\frac{\lceil\alpha\rceil}{1}$ is a $\BUDA(\ell)$ to $\alpha$.
\end{observation}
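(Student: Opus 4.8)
The plan is to exploit the fact that both candidate fractions have denominator $q=1$, which collapses the problem to the first kind irrespective of $\ell$, so that the exponent $\ell-1$ never enters in a nontrivial way.

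First I would treat the $\BLDA(\ell)$ claim with $\frac{p}{q}=\frac{\lfloor\alpha\rfloor}{1}$, so $q=1$. In Definition~\ref{Def. rth}, any competitor $\frac{p'}{q'}$ is required to satisfy $q'\in\mathbb{N}$ and $q'\leq q=1$, which forces $q'=1$. Consequently every weight appearing in~(\ref{rth below}) equals $1^{\ell-1}=1$, and~(\ref{rth below}) reduces to the $\ell$-independent statement $0\leq\alpha-\lfloor\alpha\rfloor<\alpha-p'$ for all integers $p'\leq\alpha$ with $p'\neq\lfloor\alpha\rfloor$. The left inequality is just the defining property $\lfloor\alpha\rfloor\leq\alpha$ of the floor. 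For the right one, an admissible competitor $p'\in\mathbb{Z}$ satisfies $p'\leq\alpha$ and $p'\neq\lfloor\alpha\rfloor$; since an integer not exceeding $\alpha$ is at most $\lfloor\alpha\rfloor$, this gives $p'\leq\lfloor\alpha\rfloor-1<\lfloor\alpha\rfloor$, whence $\alpha-\lfloor\alpha\rfloor<\alpha-p'$. Thus~(\ref{rth below}) holds and $\frac{\lfloor\alpha\rfloor}{1}$ is a $\BLDA(\ell)$.

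The $\BUDA(\ell)$ claim is entirely symmetric: with $\frac{p}{q}=\frac{\lceil\alpha\rceil}{1}$ the constraint $q'\leq1$ again forces $q'=1$, all weights equal $1$, and~(\ref{rth above}) becomes $0\leq\lceil\alpha\rceil-\alpha<p'-\alpha$ for every integer $p'\geq\alpha$ with $p'\neq\lceil\alpha\rceil$; these hold because $\lceil\alpha\rceil\geq\alpha$ and any integer $\geq\alpha$ distinct from $\lceil\alpha\rceil$ is at least $\lceil\alpha\rceil+1$. The argument is essentially routine, and there is no real obstacle; the only point worth stating explicitly is the reduction $q'=1$, since it is precisely this that makes the conclusion hold uniformly in $\ell$.
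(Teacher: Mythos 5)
Your proof is correct: the reduction $q'\leq q=1\Rightarrow q'=1$, which kills the weight $(q')^{\ell-1}=1$ and makes the claim uniform in $\ell$, is exactly the reasoning the paper implicitly relies on (it states this as an Observation without proof, treating it as immediate). Nothing further is needed; your write-up simply makes the obvious argument explicit, including the integer case $\alpha\in\mathbb{Z}$, which is handled correctly by the strict inequality $p'\leq\lfloor\alpha\rfloor-1$.
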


However, as $\ell$ grows beyond $3$, the structure of the sets of $\BLDA(\ell)$ and $\BUDA(\ell)$ to a given $\alpha=[a_0;a_1,a_2,a_3,\ldots]$ becomes increasingly dependent on the values of $a_j$. Consider the following proposition:

\begin{proposition}\label{Prop.crit.4}
Let $\ell\geq4$. For a given $\alpha=[a_0;a_1,a_2,a_3,\ldots]$, set
\begin{equation}\label{Cnl}
C_n(\ell)=\frac{q_n^{\ell-3}}{[a_{n+1};a_{n+2},\ldots]+[0;a_n,a_{n-1},\ldots,a_1]}\;,
\end{equation}
where $\frac{p_n}{q_n}$ is the $n$-th convergent of $\alpha$.
Then
$\frac{p_n}{q_n}$ is a best lower approximation of the $\ell$-th kind to $\alpha$ if and only if $n$ is even and $C_n(\ell)<C_k(\ell)$ for all $k=0,2,4,6,\ldots,n-2$.
\end{proposition}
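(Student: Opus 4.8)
The plan is to reduce the statement to a direct application of Proposition~\ref{Prop.crit}(i), exactly as was done for $\BLDA(3)$ in Proposition~\ref{Prop.3}(i), but now carrying the extra factor $q_n^{\ell-3}$ that appears when $\ell>3$. First I would invoke Theorem~\ref{Thm.3} together with Observation~\ref{Obs.l'<l}: since every $\BLDA(\ell)$ for $\ell\geq4$ is in particular a $\BLDA(3)$, Theorem~\ref{Thm.3}(i) guarantees that the only candidates are even-order convergents of $\alpha$. Hence I may set $\mathcal{S}_L=\left\{\frac{p_n}{q_n}:n\text{ is even}\right\}$, which contains all $\BLDA(\ell)$ to $\alpha$, and apply the criterion~(\ref{Crit.below}) of Proposition~\ref{Prop.crit}(i) to this set.

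The core computation is then to translate condition~(\ref{Crit.below}) into the claimed inequality $C_n(\ell)<C_k(\ell)$. By Proposition~\ref{Prop.crit}(i), a convergent $\frac{p_n}{q_n}$ with even $n$ is a $\BLDA(\ell)$ if and only if
\begin{equation*}
q_n^{\ell-1}\left(\alpha-\frac{p_n}{q_n}\right)<q_k^{\ell-1}\left(\alpha-\frac{p_k}{q_k}\right)
\qquad\text{for all even }k<n.
\end{equation*}
The key step is to insert the closed form~(\ref{Ha.(2.2)}) from Proposition~\ref{Prop.Ha} for $\alpha-\frac{p_n}{q_n}$ and for $\alpha-\frac{p_k}{q_k}$. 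Because $n$ and $k$ are both even, the sign factor $(-1)^n$ is positive in each case, so both sides are positive and the inequality is preserved without any sign subtlety. Substituting~(\ref{Ha.(2.2)}) gives
\begin{equation*}
\frac{q_n^{\ell-1}}{q_n^2\bigl([a_{n+1};a_{n+2},\ldots]+[0;a_n,\ldots,a_1]\bigr)}
<\frac{q_k^{\ell-1}}{q_k^2\bigl([a_{k+1};a_{k+2},\ldots]+[0;a_k,\ldots,a_1]\bigr)}\,,
\end{equation*}
and cancelling $q_n^2$ and $q_k^2$ leaves precisely $C_n(\ell)<C_k(\ell)$ with $C_n(\ell)$ as defined in~(\ref{Cnl}). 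This establishes the claimed equivalence for even $n$, while odd $n$ is excluded outright by Theorem~\ref{Thm.3}(i).

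I do not expect a serious obstacle here; the argument is essentially a routine specialization of the $\ell=3$ case, the only genuine novelty being the retention of the power $q_n^{\ell-3}$ that survives after the $q_n^2$ from~(\ref{Ha.(2.2)}) cancels against $q_n^{\ell-1}$. The one point deserving explicit mention is the positivity of both sides, which is what lets us pass from the form~(\ref{rth below}) (with the nonnegativity constraint) to the clean quotient comparison without reversing the inequality; this is automatic because both indices are even and hence $\alpha-\frac{p}{q}\geq0$ on the relevant convergents by Proposition~\ref{Prop.monotonie}. It therefore suffices to remark that the proof parallels that of Proposition~\ref{Prop.3}(i) and to display the substitution of~(\ref{Ha.(2.2)}) leading to $C_n(\ell)$.
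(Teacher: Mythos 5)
Your proposal is correct and follows essentially the same route as the paper's own proof: restricting candidates to even-order convergents via Theorem~\ref{Thm.3} and Observation~\ref{Obs.l'<l}, applying Proposition~\ref{Prop.crit}(i) with $\mathcal{S}_L=\{\frac{p_n}{q_n}:n\text{ even}\}$, and substituting~(\ref{Ha.(2.2)}) to reduce the criterion to $C_n(\ell)<C_k(\ell)$. Your explicit remark on the positivity of both sides (the sign factor $(-1)^n$ for even indices) is a point the paper leaves implicit, but otherwise the arguments coincide.
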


\begin{proof}
The proof is similar to the proof of Proposition~\ref{Prop.3}(i).
We use Theorem~\ref{Thm.3} together with Observation~\ref{Obs.l'<l} to infer that every $\BLDA(\ell)$ to $\alpha$ is an even--order convergent of $\alpha$. Then we apply Proposition~\ref{Prop.crit}(i) with $\mathcal{S}_L=\left\{\frac{p_n}{q_n}\,:\,\mbox{$n$ is even}\right\}$, whence we obtain that $\frac{p_n}{q_n}$ for an even $n$ is a $\BLDA(\ell)$ if and only if
\begin{equation}\label{cond.4}
q_n^{\ell-1}\left(\alpha-\frac{p_n}{q_n}\right)<q_k^{\ell-1}\left(\alpha-\frac{p_k}{q_k}\right) \qquad \mbox{for all even $k<n$}.
\end{equation}
Finally, we use~(\ref{Ha.(2.2)}) to rewrite condition~(\ref{cond.4}) in the form
$$
\frac{q_n^{\ell-3}}{[a_{n+1};a_{n+2},\ldots]+[0;a_n,a_{n-1},\ldots,a_1]}<\frac{q_k^{\ell-3}}{[a_{k+1};a_{k+2},\ldots]+[0;a_k,a_{k-1},\ldots,a_1]}
$$
for all even $k<n$.
\end{proof}

Let us comment on Proposition~\ref{Prop.crit.4}. Recall that $q_n$ depends solely on terms $a_j$ for $j\leq n$; cf.~(\ref{rekurence}). So does the numerator of $C_n(\ell)$ in expression~(\ref{Cnl}), while the denominator has $a_{n+1}$ as its dominant term. Therefore, $\frac{p_n}{q_n}$ is a best lower approximation to $\alpha$ if and only if $a_{n+1}$ is large enough compared to the quantity $q_n\in\left[\prod_{j=1}^n a_j,\prod_{j=1}^n(a_j+1)\right)$. Hence we conclude that the number of $\BLDA(\ell)$ to a given $\alpha$ can in general attain any value from $1$ to infinity depending on the arrangement of large terms at odd positions in the continued fraction expansion of $\alpha$.
Similar results can be derived for best upper approximations of the $\ell$-th kind.

In particular, since the numerators of $C_n(\ell)$ in~(\ref{Cnl}) grow to infinity (if $\ell\geq4$), Proposition~\ref{Prop.crit.4} and the considerations above have a straightforward consequence:

\begin{observation}
If $\ell\geq4$ and the terms $a_n$ with odd indices $n$ in $\alpha=[a_0;a_1,a_2,a_3,\ldots]$ are bounded, then $\alpha$ has only finitely many best lower approximations of the $\ell$-th kind. Similarly, if the terms $a_n$ with even indices $n$ are bounded, there are only finitely many $\BUDA(\ell)$.
\end{observation}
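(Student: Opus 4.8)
The plan is to read off finiteness directly from the record-minimum reformulation of Proposition~\ref{Prop.crit.4}. That proposition says that $\frac{p_n}{q_n}$ is a $\BLDA(\ell)$ exactly when $n$ is even and $C_n(\ell)<C_k(\ell)$ for every even $k<n$; in other words, the $\BLDA(\ell)$ correspond precisely to the strict record minima of the sequence $\bigl(C_n(\ell)\bigr)$ taken over even indices $n$. Thus it suffices to show that this sequence has only finitely many such records, and the cleanest way to force that is to prove $C_n(\ell)\to\infty$ as $n\to\infty$ through even $n$.

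First I would control the denominator of $C_n(\ell)$ in~(\ref{Cnl}). Writing $D_n=[a_{n+1};a_{n+2},\ldots]+[0;a_n,a_{n-1},\ldots,a_1]$, I note that the first summand is less than $a_{n+1}+1$ and the second lies in $(0,1]$, so $D_n<a_{n+1}+2$. For even $n$ the index $n+1$ is odd, so the hypothesis that the odd-indexed partial quotients are bounded, say by $M$, gives the uniform bound $D_n<M+2$. Since $\ell\geq4$ forces $\ell-3\geq1$ and $q_n\geq1$, I get $C_n(\ell)=q_n^{\ell-3}/D_n>q_n/(M+2)$. Because $q_n\to\infty$, this yields $C_n(\ell)\to\infty$ over even $n$.

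To conclude, observe that $C_0(\ell)=1/D_0$ is a fixed finite positive number. Since $C_n(\ell)\to\infty$, there is an $N$ such that $C_n(\ell)\geq C_0(\ell)$ for all even $n>N$; for every such $n$ the record condition already fails at the even index $k=0<n$, so $\frac{p_n}{q_n}$ is not a $\BLDA(\ell)$. Hence the $\BLDA(\ell)$ sit among the finitely many even indices $n\leq N$, proving the first claim. The statement for $\BUDA(\ell)$ follows by the symmetric argument: the analogue of Proposition~\ref{Prop.crit.4} for upper approximations involves odd indices $n$ (for which $n+1$ is even, so the boundedness of the even-indexed partial quotients supplies the same denominator bound), and the single extra candidate $\frac{\lceil\alpha\rceil}{1}$ from Theorem~\ref{Thm.3} contributes at most one more fraction and therefore does not affect finiteness.

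The main obstacle I anticipate is purely the uniform denominator estimate: one must verify that boundedness of the partial quotients at the relevant parity really does bound $D_n$ independently of $n$, which hinges on the fact that the tail $[a_{n+1};a_{n+2},\ldots]$ contributes essentially only its bounded integer part $a_{n+1}$ while the head $[0;a_n,\ldots,a_1]$ is automatically confined to $(0,1]$. Once this is in hand, the positivity of the exponent $\ell-3$ (the one place where $\ell\geq4$ is used) does the rest.
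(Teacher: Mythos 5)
Your proof is correct and takes essentially the same route as the paper: the Observation is presented there as a direct consequence of Proposition~\ref{Prop.crit.4} together with the remark that the denominator of $C_n(\ell)$ has $a_{n+1}$ as its dominant term (hence is bounded when the partial quotients of the relevant parity are bounded) while the numerator $q_n^{\ell-3}$ tends to infinity for $\ell\geq4$, so only finitely many record minima, i.e.\ finitely many $\BLDA(\ell)$ or $\BUDA(\ell)$, can occur. Your explicit estimate $D_n<M+2$, the comparison with $C_0(\ell)$, and the handling of the extra candidate $\frac{\lceil\alpha\rceil}{1}$ in the upper case simply spell out what the paper leaves as a ``straightforward consequence.''
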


We can even say more:

\begin{proposition}\label{A}
(i)\; Let $\ell$ be a positive integer such that $\ell\geq 4$ and let $\{ a_n\}_{n=1}^\infty$ be a  sequence of positive integers such that 
\begin{equation}\label{goldenratio}
\limsup_{n\to\infty} \frac {\log a_{2n+1}}{2n+1} <(\ell-3)\log \varphi\,,
\end{equation}
where $\varphi=\frac{1+\sqrt{5}}{2}$ is the golden ratio. Then the number $\alpha=[a_0;a_1,a_2,a_3,\ldots]$  has only finitely many best lower approximations of the $\ell$-th kind.

(ii)\; Similarly, if we have
\begin{equation}\label{goldenratioB}
\limsup_{n\to\infty} \frac {\log a_{2n}}{2n} <(\ell-3)\log \varphi\,,
\end{equation}
then $\alpha=[a_0;a_1,a_2,a_3,\ldots]$ has only finitely many best upper approximations of the $\ell$-th kind.
\end{proposition}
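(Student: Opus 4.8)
The plan is to reduce Proposition~\ref{A} to the criterion of Proposition~\ref{Prop.crit.4} by controlling the growth of the numerator versus the denominator in the quantity $C_n(\ell)$. I focus on part~(i), since part~(ii) follows by the same argument with the roles of even and odd indices exchanged (or via the symmetry $\alpha\leftrightarrow-\alpha$ noted after Observation~\ref{floorceil}). By Proposition~\ref{Prop.crit.4}, a convergent $\frac{p_n}{q_n}$ with even $n$ is a $\BLDA(\ell)$ if and only if $C_n(\ell)<C_k(\ell)$ for every even $k<n$; in particular, a necessary condition for $\frac{p_n}{q_n}$ to be a $\BLDA(\ell)$ is that $C_n(\ell)<C_0(\ell)$. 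Thus it suffices to show that under hypothesis~(\ref{goldenratio}) we have $C_n(\ell)\to\infty$ as $n\to\infty$ through even values, for then $C_n(\ell)<C_0(\ell)$ can hold for only finitely many even $n$, leaving only finitely many $\BLDA(\ell)$.

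First I would bound the denominator of $C_n(\ell)$ from above. The denominator is $[a_{n+1};a_{n+2},\ldots]+[0;a_n,\ldots,a_1]$; the second summand lies in $(0,1)$, while the first equals $a_{n+1}+[0;a_{n+2},\ldots]<a_{n+1}+1$. Hence the whole denominator is at most $a_{n+1}+2$, so
\begin{equation}\label{A.lowerbound}
C_n(\ell)=\frac{q_n^{\ell-3}}{[a_{n+1};a_{n+2},\ldots]+[0;a_n,\ldots,a_1]}>\frac{q_n^{\ell-3}}{a_{n+1}+2}\,.
\end{equation}
Next I would bound the numerator from below. Since the continued fraction with all partial quotients equal to $1$ minimizes the growth of $q_n$, the recurrence~(\ref{rekurence}) gives $q_n\geq F_{n+1}$, where $F_n$ is the $n$-th Fibonacci number; and the classical estimate $F_{n+1}\geq\varphi^{n-1}$ yields $q_n\geq\varphi^{n-1}$. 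Therefore $q_n^{\ell-3}\geq\varphi^{(\ell-3)(n-1)}$, and combining with~(\ref{A.lowerbound}) gives the clean lower bound $C_n(\ell)>\varphi^{(\ell-3)(n-1)}/(a_{n+1}+2)$.

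It then remains to show that the right-hand side tends to infinity along even $n$, which is where hypothesis~(\ref{goldenratio}) enters. Taking logarithms, $\log C_n(\ell)>(\ell-3)(n-1)\log\varphi-\log(a_{n+1}+2)$, so it suffices that $\log(a_{n+1}+2)$ is eventually dominated by $(\ell-3)(n-1)\log\varphi$. For even $n$ the relevant index $n+1$ is odd; writing $n+1=2m+1$ and invoking~(\ref{goldenratio}), for every $\epsilon>0$ we have $\log a_{n+1}<\bigl((\ell-3)\log\varphi-\epsilon\bigr)(n+1)$ for all large enough $n$, whence $\log(a_{n+1}+2)=(\ell-3)(n+1)\log\varphi - \Omega(\epsilon n)$, and the surviving negative linear term in $n$ forces $\log C_n(\ell)\to+\infty$. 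The main obstacle is purely bookkeeping: one must verify that the strict gap $\epsilon$ furnished by the $\limsup$ hypothesis beats the $O(1)$ slack coming from the $+2$ in the denominator and from the $(n-1)$ versus $(n+1)$ index shift, so that the linear-in-$n$ terms have the correct net sign. Once $C_n(\ell)\to\infty$ is established, the conclusion is immediate: only finitely many even $n$ satisfy $C_n(\ell)<C_0(\ell)$, so by Proposition~\ref{Prop.crit.4} the number $\alpha$ has only finitely many $\BLDA(\ell)$.
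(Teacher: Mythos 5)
Your proposal is correct and follows essentially the same route as the paper's proof: both reduce to the criterion of Proposition~\ref{Prop.crit.4}, bound the denominator of $C_n(\ell)$ by $a_{n+1}+2$, bound $q_n$ from below by Fibonacci growth, and conclude $C_n(\ell)\to\infty$ so that $C_n(\ell)<C_0(\ell)$ fails for all large even $n$. The only difference is bookkeeping --- you encode the gap in hypothesis~(\ref{goldenratio}) as an additive $\epsilon$ in logarithms and use $q_n\geq\varphi^{n-1}$, whereas the paper picks $x$ with $1<x<\varphi$ so that $a_{n+1}<x^{(\ell-3)(n+1)}$ and uses the closed-form Fibonacci expression --- and both versions close the argument in the same way.
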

\begin{proof} 
We will prove the part (i); the proof of (ii) is similar. In view of Proposition~\ref{Prop.crit.4}, let us examine the quantity $C_n(\ell)$ for even numbers $n$. First of all, we have trivially
\begin{equation}\label{Cnl2}
C_n(\ell)\geq\frac{q_{n}^{\ell -3}}{a_{n+1}+\frac{1}{a_{n+2}}+\frac{1}{a_{n}}}\geq  \frac{q_{n}^{\ell -3}}{a_{n+1}+2}\,.
\end{equation}
Now we will estimate the numerator and denominator of~(\ref{Cnl2}).
From (\ref{goldenratio}) we obtain that there exists an $x$ and a $k_0$ such that $1< x<\varphi$ and $\log a_{k} <(\ell-3)k\log x$ for all odd $k>k_0$. Taking in particular the odd integer $k=n+1$ (recall that $n$ is even), we have
\begin{equation}\label{estimation}
a_{n+1}<x^{(\ell-3)(n+1)} \qquad\mbox{for all even $n\geq k_0$.}
\end{equation}
Using the recurrent relation~(\ref{rekurence}), we get $q_n\geq F_n$ for all $n$, where $F_n=\frac 1{\sqrt 5}(\varphi^n-\varphi^{-n})$ is the $n$-th Fibonacci number; note that the equality $q_n=F_n$ holds iff $1=a_1=\dots =a_n$.
When we plug the estimate $q_n\geq F_n$ and~(\ref{estimation}) into~(\ref{Cnl2}), we get
\begin{equation}\label{fibonacci}
C_n(\ell)\geq\frac{1}{(\sqrt{5})^{\ell-3}}\cdot
\frac{\left(\varphi^{n}-\varphi^{-n}\right)^{\ell -3}}{x^{(\ell-3)(n+1)}+2} \qquad\mbox{for all even $n\geq k_0$.}
\end{equation}
Now inequality $1< x<\varphi$  yields 
\begin{equation*}
\lim_{n\to\infty} \frac{\left(\varphi^{n}-\varphi^{-n}\right)^{\ell -3}}{x^{(\ell-3)(n+1)}+2}=\infty.
\end{equation*}
As a particular consequence of this and (\ref{fibonacci}),
there exists $n_0$ such that for all even $n>n_0$, we have $C_n(\ell)\nless C_0(\ell)$.
Then Proposition~\ref{Prop.crit.4} implies that a convergent $\frac{p_n}{q_n}$ is a best lower approximations of the $\ell$-th kind to $\alpha$ only if $n\leq n_0$. Consequently, the number of $\BLDA(\ell)$ to $\alpha$ is finite.
\end{proof}

Both sets of $\BLDA(\ell)$ and $\BUDA(\ell)$ are finite also in the case when $\alpha$ is an irrational algebraic number, i.e., an irrational root of a polynomial with integer coefficients:

\begin{proposition}\label{algebraic}
For all $\ell\geq4$, every irrational algebraic number $\alpha$ has a finite number of best upper and best lower approximations of the $\ell$-th kind.
\end{proposition}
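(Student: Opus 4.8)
The plan is to reduce the statement, via the explicit criterion of Proposition~\ref{Prop.crit.4}, to a growth estimate for the quantities $C_n(\ell)$, and then to feed in the one nontrivial arithmetic fact about algebraic numbers, namely Roth's theorem. Recall that by Proposition~\ref{Prop.crit.4} a convergent $\frac{p_n}{q_n}$ with even $n$ can be a $\BLDA(\ell)$ only if $C_n(\ell)<C_0(\ell)$. Since $C_0(\ell)$ is a fixed finite positive number, it therefore suffices to show that $C_n(\ell)\to\infty$ as $n\to\infty$ through even indices: once $C_n(\ell)>C_0(\ell)$ for all large even $n$, only finitely many even $n$ survive the minimality condition, so the set of $\BLDA(\ell)$ is finite. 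Using the trivial bound on the denominator of~(\ref{Cnl}) already recorded in~(\ref{Cnl2}), we have $C_n(\ell)\geq\frac{q_n^{\ell-3}}{a_{n+1}+2}$, and the whole problem is thus reduced to controlling the size of $a_{n+1}$ relative to $q_n$.

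This is exactly where algebraicity enters. First I would record the standard inequality $\left|\alpha-\frac{p_n}{q_n}\right|<\frac{1}{q_nq_{n+1}}<\frac{1}{a_{n+1}q_n^2}$, which follows from $q_{n+1}=a_{n+1}q_n+q_{n-1}>a_{n+1}q_n$. On the other hand, Roth's theorem asserts that for an irrational algebraic $\alpha$ and any $\varepsilon>0$ the inequality $\left|\alpha-\frac{p}{q}\right|<q^{-2-\varepsilon}$ has only finitely many rational solutions; applied to the convergents, this gives $\left|\alpha-\frac{p_n}{q_n}\right|>q_n^{-2-\varepsilon}$ for all but finitely many $n$. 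Comparing the two estimates yields $a_{n+1}<q_n^{\varepsilon}$ for all but finitely many $n$.

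Now fix $\varepsilon$ with $0<\varepsilon<\ell-3$, which is possible precisely because $\ell\geq4$. Then for all sufficiently large even $n$ we have $C_n(\ell)\geq\frac{q_n^{\ell-3}}{q_n^{\varepsilon}+2}\geq\frac{1}{2}q_n^{\,\ell-3-\varepsilon}$, and since $q_n\to\infty$ the right-hand side diverges. Hence $C_n(\ell)\to\infty$, and by the reduction above $\alpha$ has only finitely many $\BLDA(\ell)$. The statement for $\BUDA(\ell)$ follows in the same way, using the odd-index analogue of Proposition~\ref{Prop.crit.4} (the identity~(\ref{Ha.(2.2)}) governs odd-order convergents just as well). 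The one genuinely nonelementary step, and the crux of the whole argument, is the invocation of Roth's theorem to convert ``$\alpha$ algebraic'' into the subexponential bound $a_{n+1}<q_n^{\varepsilon}$; everything else is bookkeeping with the closed form of $C_n(\ell)$. I expect the only point requiring care is verifying that the elementary inequality $\left|\alpha-\frac{p_n}{q_n}\right|<\frac{1}{a_{n+1}q_n^2}$ and the Roth lower bound can be combined uniformly in $n$, which they can since both hold for all large $n$ with the \emph{same} fixed $\varepsilon$.
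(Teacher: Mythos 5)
Your proof is correct, but it follows a genuinely different route from the paper's. The paper's own proof bypasses the continued--fraction apparatus altogether: it invokes Roth's theorem with $\varepsilon=\ell-3$ to conclude that only finitely many fractions $\frac{p}{q}$ satisfy $q^{\ell-1}\left|\alpha-\frac{p}{q}\right|<1$, and then notes that any $\BLDA(\ell)$ must beat the competitor $p'=\lfloor\alpha\rfloor$, $q'=1$ in Definition~\ref{Def. rth}, i.e.\ must satisfy $q^{\ell-1}\left(\alpha-\frac{p}{q}\right)<\alpha-\lfloor\alpha\rfloor<1$; the upper case is symmetric with $\frac{\lceil\alpha\rceil}{1}$. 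This two--line comparison needs neither Theorem~\ref{Thm.3} nor Proposition~\ref{Prop.crit.4} and treats all rationals at once, not just convergents. You instead pipe Roth through the convergent machinery: combining the standard estimate $\left|\alpha-\frac{p_n}{q_n}\right|<\frac{1}{a_{n+1}q_n^2}$ with the Roth lower bound $q_n^{-2-\varepsilon}$ (legitimate, since convergents are in lowest terms) to get $a_{n+1}<q_n^{\varepsilon}$ for all large $n$, whence by~(\ref{Cnl2}) one has $C_n(\ell)\geq\frac{1}{2}q_n^{\ell-3-\varepsilon}\to\infty$ once $0<\varepsilon<\ell-3$, so the necessary condition $C_n(\ell)<C_0(\ell)$ from Proposition~\ref{Prop.crit.4} fails for all large even $n$. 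That is sound, and for $\BUDA(\ell)$ you do not in fact need a full odd--index analogue of Proposition~\ref{Prop.crit.4}: by~(\ref{Ha.(2.2)}), $q_n^{\ell-1}\left(\frac{p_n}{q_n}-\alpha\right)=C_n(\ell)$ for odd $n$, and comparison against the single admissible competitor $\frac{\lceil\alpha\rceil}{1}$ in Definition~\ref{Def. rth} already forces $C_n(\ell)<\lceil\alpha\rceil-\alpha$, which fails for large odd $n$, while the extra candidate $\frac{\lceil\alpha\rceil}{1}$ from Theorem~\ref{Thm.3}(ii) contributes at most one approximation. What your route buys is quantitative: an explicit subpolynomial bound $a_{n+1}<q_n^{\varepsilon}$ on the partial quotients of an algebraic number and a divergence rate for $C_n(\ell)$, which places the result naturally alongside Proposition~\ref{A}; the paper's argument, by contrast, is shorter, more self--contained, and independent of the structure theory developed in Sections~\ref{Section:1st,2nd}--\ref{Section:4th}.
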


\begin{proof}
Let us prove that the number of $\BLDA(\ell)$ is finite. The case of $\BUDA(\ell)$ is similar.
Let $\alpha$ be an irrational algebraic number. Roth's theorem states that for each $\varepsilon>0$ there are finitely many coprime integers $p,q$ such that
\begin{equation*}
\left|\alpha-\frac{p}{q}\right|<\frac{1}{q^{2+\varepsilon}}\,.
\end{equation*}
Setting in particular $\varepsilon=\ell-3$, we obtain that for any $\ell>3$ there exist only finitely many integers $p,q$ such that
\begin{equation}\label{pqRoth}
q^{\ell-1}\left|\alpha-\frac{p}{q}\right|<1.
\end{equation}
At the same time, the choice $p'=\lfloor\alpha\rfloor$, $q'=1$ gives
\begin{equation}\label{p'q'}
(q')^{\ell-1}\left(\alpha-\frac{p'}{q'}\right)=1^{\ell-1}\left(\alpha-\frac{\lfloor\alpha\rfloor}{1}\right)=\alpha-\lfloor\alpha\rfloor<1\,.
\end{equation}
From~(\ref{pqRoth}) and~(\ref{p'q'}), we obtain that there are only finitely many rational numbers $\frac{p}{q}<\alpha$ such that
\begin{equation*}
0\leq q^{\ell-1}\left(\alpha-\frac{p}{q}\right)<1^{\ell-1}\left(\alpha-\frac{\lfloor\alpha\rfloor}{1}\right).
\end{equation*}
In other words, only finitely many rational numbers $\frac{p}{q}$ can satisfy the definition of a best lower approximation to $\alpha$ (Definition~\ref{Def. rth}).
\end{proof}

Let us conclude this section with describing metric properties of the sets of numbers having infinitely many best one--sided approximations of the $\ell$-th kind.
\begin{proposition}
For every $\ell\geq4$ the set of numbers $\alpha$ which have infinitely many best upper or lower approximations of the $\ell$-th kind has zero Lebesgue measure. 
\end{proposition}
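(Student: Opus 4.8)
The plan is to derive the result directly from Proposition~\ref{A} by combining its contrapositive with the convergence half of the Borel--Cantelli lemma. Since $\frac{p}{q}$ is a $\BLDA(\ell)$ to $\alpha$ exactly when $\frac{-p}{q}$ is a $\BUDA(\ell)$ to $-\alpha$, and since the growth hypotheses of Proposition~\ref{A} involve only the partial quotients $a_1,a_2,\ldots$ (which depend solely on the fractional part of $\alpha$), it suffices to bound the relevant sets on $[0,1)$ and then pass to $\mathbb{R}$ by translation invariance.

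First I would apply the contrapositive of Proposition~\ref{A}(i). Put $c=(\ell-3)\log\varphi$, which is strictly positive because $\ell\geq4$. Proposition~\ref{A}(i) asserts that $\limsup_{n\to\infty}\frac{\log a_{2n+1}}{2n+1}<c$ implies that $\alpha$ has only finitely many $\BLDA(\ell)$; hence every $\alpha$ with infinitely many $\BLDA(\ell)$ satisfies $\limsup_{n\to\infty}\frac{\log a_{2n+1}}{2n+1}\geq c$. By the meaning of $\limsup$, taking $\varepsilon=c/2$ this forces $a_{2n+1}>\mathrm{e}^{(c/2)(2n+1)}$ for infinitely many $n$. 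Writing $m$ for the (odd) index $2n+1$ and enlarging the event to run over all indices, the set $B_L$ of $\alpha\in[0,1)$ with infinitely many $\BLDA(\ell)$ is contained in
\[
E=\bigl\{\alpha\in[0,1):\ a_m(\alpha)>\mathrm{e}^{(c/2)m}\ \text{for infinitely many }m\in\mathbb{N}\bigr\}\,.
\]

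The remaining, and essentially only analytic, step is to show $\lambda(E)=0$, where $\lambda$ denotes Lebesgue measure. Here I would invoke the well-known uniform tail bound from the metric theory of continued fractions: there is an absolute constant $C>0$ such that $\lambda(\{\alpha\in[0,1):a_m(\alpha)\geq K\})\leq C/K$ for all $m,K\in\mathbb{N}$ (for $m=1$ this is exact with $C=1$, and for general $m$ it follows from the Gauss--Kuzmin--L\'evy distortion estimates). Applying this with $K=\lceil\mathrm{e}^{(c/2)m}\rceil$ gives $\lambda(\{a_m>\mathrm{e}^{(c/2)m}\})\leq C\,\mathrm{e}^{-(c/2)m}$, and since $c>0$ the geometric series $\sum_{m\geq1}\mathrm{e}^{-(c/2)m}$ converges. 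The first Borel--Cantelli lemma (which requires no independence) then yields $\lambda(E)=0$, hence $\lambda(B_L)=0$; this is exactly the convergence half of the Borel--Bernstein theorem for the summable gauge $\mathrm{e}^{(c/2)m}$.

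Finally, the same argument applied to Proposition~\ref{A}(ii) with the even-indexed partial quotients shows that the set $B_U$ of $\alpha\in[0,1)$ with infinitely many $\BUDA(\ell)$ is also Lebesgue-null. Because the number of $\BLDA(\ell)$ and $\BUDA(\ell)$ to $\alpha$ is unchanged under adding an integer to $\alpha$, the set in the statement is contained in the translation-invariant set $\bigcup_{k\in\mathbb{Z}}\bigl((B_L\cup B_U)+k\bigr)$, a countable union of null sets, and is therefore Lebesgue-null as well. I expect the only delicate point to be the non-independence of the partial quotients $a_m$; this is precisely what the uniform distortion estimate $\lambda(\{a_m\geq K\})\leq C/K$ sidesteps, after which the conclusion is a direct transcription of Proposition~\ref{A}.
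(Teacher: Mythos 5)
Your proof is correct, but it takes a genuinely different route from the paper's. The paper argues as in Proposition~\ref{algebraic}: every $\BLDA(\ell)$ (and $\BUDA(\ell)$) to $\alpha$ satisfies $q^{\ell-1}\left|\alpha-\frac{p}{q}\right|<1$, i.e.\ $\left|\alpha-\frac{p}{q}\right|<q^{-(2+\varepsilon)}$ with $\varepsilon=\ell-3>0$, so any $\alpha$ with infinitely many such approximations lies in the classical null set $\mathbb{S}$ of $(2+\varepsilon)$-approximable numbers \cite[p.~103]{bug}; no continued-fraction machinery is needed. You instead take the contrapositive of Proposition~\ref{A} and settle the resulting containment with the convergence half of the Borel--Bernstein theorem, via the uniform tail bound $\lambda(\{a_m\geq K\})\leq C/K$ and the first Borel--Cantelli lemma. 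All the steps check out: the passage from $\limsup\geq c$ to $a_{2n+1}>\mathrm{e}^{(c/2)(2n+1)}$ infinitely often is sound, the tail bound is a standard distortion estimate, Borel--Cantelli needs no independence, and the reduction to $[0,1)$ by integer translation is legitimate since the sets of $\BLDA(\ell)$ and $\BUDA(\ell)$ are preserved under $\alpha\mapsto\alpha+k$ (note only that rational $\alpha$, excluded by Proposition~\ref{A}'s hypothesis of an infinite expansion, form a countable null set, so they cost nothing). Comparing the two: the paper's argument is shorter, reuses Proposition~\ref{algebraic} verbatim, and adapts immediately to the Hausdorff-dimension bound of the next proposition, which your route does not give directly (Borel--Bernstein controls measure, not dimension, without further work). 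Your route, on the other hand, stays internal to the continued-fraction framework of Section~\ref{Section:4th}, mirrors the paper's own proof of Proposition~\ref{Prop.Lebesgue} for $\ell=3$ (where unbounded partial quotients at odd/even positions were likewise the key, with \cite{Ha17} playing the role of your tail estimate), and yields extra structural information---exponentially large odd-indexed (resp.\ even-indexed) partial quotients must occur infinitely often. It is also mildly wasteful: the precise golden-ratio threshold $(\ell-3)\log\varphi$ of Proposition~\ref{A} is irrelevant here, as any positive constant $c$ would do.
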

\begin{proof}
To obtain the statement, we use the fact that for every positive real $\varepsilon$ the set 
\begin{equation}\label{SS}
\fl \mathbb S=\left\{ \alpha \; ; \; \left| \alpha -\frac pq\right| <\frac 1{q^{2+\varepsilon}}\; \mbox{has infinitely many solutions}\; (p,q)\in(\mathbb Z,\mathbb Z)\right\}
\end{equation}
has zero Lebesgue measure \cite[p.~103]{bug}. Then we put $\varepsilon=\ell-3$ and follow the steps in the proof of Proposition \ref{algebraic}.
\end{proof}

\begin{proposition}
For every $\ell\geq4$ the set of numbers $\alpha$ which have infinitely many best upper or lower approximations of the $\ell$-th kind has Hausdorff dimension at most $\frac 2{\ell-1}$.
\end{proposition}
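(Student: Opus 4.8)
The plan is to reduce the statement to the classical upper bound on the Hausdorff dimension of the set of well approximable numbers (the Jarník--Besicovitch bound). First I would observe, exactly as in the proof of Proposition~\ref{algebraic}, that having infinitely many one--sided approximations forces $\alpha$ to be unusually well approximable. Indeed, if $\frac{p}{q}$ is a $\BLDA(\ell)$ to $\alpha$ and $\frac{p}{q}\neq\frac{\lfloor\alpha\rfloor}{1}$, then comparing it with the admissible fraction $\frac{\lfloor\alpha\rfloor}{1}$ in~(\ref{rth below}) gives $q^{\ell-1}\left(\alpha-\frac{p}{q}\right)<\alpha-\lfloor\alpha\rfloor<1$, and the analogous comparison with $\frac{\lceil\alpha\rceil}{1}$ in~(\ref{rth above}) treats the $\BUDA(\ell)$ case. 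By Observation~\ref{floorceil} there is at most one $\BLDA(\ell)$ and one $\BUDA(\ell)$ per denominator $q$, so infinitely many $\BLDA(\ell)$ or $\BUDA(\ell)$ produce infinitely many \emph{distinct} pairs $(p,q)$ with $\left|\alpha-\frac{p}{q}\right|<\frac{1}{q^{\ell-1}}$. Hence the set in the statement is contained in
$$W=\left\{\alpha\in\mathbb{R}\;:\;\left|\alpha-\frac{p}{q}\right|<\frac{1}{q^{\ell-1}}\ \mbox{for infinitely many}\ (p,q)\in\mathbb{Z}\times\mathbb{N}\right\},$$
and it suffices to prove $\dim_{\mathrm H}W\leq\frac{2}{\ell-1}$.

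Next I would reduce to a bounded window. Since Hausdorff dimension is translation invariant and countably stable, and $\mathbb{R}=\bigcup_{k\in\mathbb{Z}}[k,k+1]$, it is enough to bound $\dim_{\mathrm H}(W\cap[0,1])$. Writing $\tau=\ell-1\geq3$ and expressing $W$ as the $\limsup$ set
$$W\cap[0,1]=\bigcap_{N\in\mathbb{N}}\bigcup_{q\geq N}\bigcup_{p}\left(\frac{p}{q}-\frac{1}{q^{\tau}},\,\frac{p}{q}+\frac{1}{q^{\tau}}\right),$$
I obtain, for every $N$, a cover of $W\cap[0,1]$ by the intervals $I_{p,q}$ with $q\geq N$, each of length $2q^{-\tau}$. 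For such an interval to meet $[0,1]$ only the values $p\in\{-1,0,\ldots,q+1\}$ are relevant, so there are at most $q+3$ of them for each $q$.

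Finally I would carry out the covering estimate. For any fixed $s>\frac{2}{\tau}$ the associated $s$--sum satisfies
$$\sum_{q\geq N}\sum_{p}\left(\mathrm{diam}\,I_{p,q}\right)^{s}\leq 2^{s}\sum_{q\geq N}(q+3)\,q^{-\tau s}\ll\sum_{q\geq N}q^{1-\tau s},$$
and since $s>\frac{2}{\tau}$ forces $1-\tau s<-1$, this tail tends to $0$ as $N\to\infty$. Therefore $\mathcal{H}^{s}(W\cap[0,1])=0$ for every $s>\frac{2}{\tau}$, whence $\dim_{\mathrm H}(W\cap[0,1])\leq\frac{2}{\tau}=\frac{2}{\ell-1}$, and the same bound passes to $W$ and hence to the set in the statement. (Alternatively, this bound is precisely the upper half of the Jarník--Besicovitch theorem and could simply be cited.) I do not expect a serious obstacle here, as we need only the upper bound; the single point requiring care is that one--sided approximations need not be in lowest terms, so the covering must range over \emph{all} pairs $(p,q)\in\mathbb{Z}\times\mathbb{N}$ rather than coprime ones. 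This affects only the implied constant and the factor $q+3$, not the critical exponent $s=\frac{2}{\tau}$, which is the crux of the estimate.
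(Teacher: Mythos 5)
Your proposal is correct and takes essentially the same approach as the paper: both arguments reduce the statement, via the comparison with $\frac{\lfloor\alpha\rfloor}{1}$ (resp.\ $\frac{\lceil\alpha\rceil}{1}$) as in the proof of Proposition~\ref{algebraic}, to the set $\mathbb{S}$ of~(\ref{SS}) with $\varepsilon=\ell-3$, i.e.\ to the numbers admitting infinitely many solutions of $\left|\alpha-\frac{p}{q}\right|<q^{-(\ell-1)}$. The only difference is cosmetic: the paper cites the Jarn\'{\i}k--Besicovitch dimension bound $\frac{2}{2+\varepsilon}$ from \cite[p.~104]{bug}, whereas you reprove its (easy) upper half by the standard covering estimate, which, as you note yourself, could simply be cited.
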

\begin{proof}
We again proceed similarly as in the proof of Proposition \ref{algebraic}, starting from the fact that for every positive real $\varepsilon$ the set $\mathbb S$, given by~(\ref{SS}),
has Hausdorff dimension $\frac 2{2+\varepsilon}$, which can be found in \cite[p.~104]{bug}. 
\end{proof}

\section{Application in mathematical physics}\label{Application}

We have seen in Remark~\ref{Geometric} that best one--sided approximations of the 2nd kind have a simple geometric interpretation. In this section we will present an advanced application of best one--sided Diophantine approximations of the 3rd kind by demonstrating their use in quantum mechanics on graphs.

The motivation for the problem arises in spectral analysis. When studying a quantum system consisting of a particle confined to an infinite periodic rectangular network with $\delta$-type potentials in the vertices (see Figure~\ref{Lattice}),
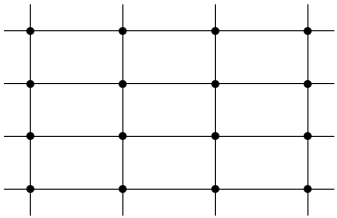
\begin{figure}[h]
\setlength{\unitlength}{1pt}
\begin{center}
\begin{picture}(125,80)
\multiput(10,0)(35,0){4}{\line(0,1){80}}
\multiput(0,10)(0,20){4}{\line(1,0){125}}
\multiput(10,10)(35,0){4}{\circle*{3}}
\multiput(10,30)(35,0){4}{\circle*{3}}
\multiput(10,50)(35,0){4}{\circle*{3}}
\multiput(10,70)(35,0){4}{\circle*{3}}
\end{picture}
\end{center}
\caption{A periodic rectangular lattice graph with $\delta$ potentials (represented by solid circles) in the vertices. A particle is confined to the edges of the graph.}
\label{Lattice}
\end{figure}
one finds that the system has gaps in its energy spectrum. In other words, there are intervals of energies that the particle cannot attain.
If we denote the lengths of the edges of the rectangle by $a$ and $b$ and consider a repulsive $\delta$ potential of strength $u>0$, it can be proved that every gap is adjacent to some of the points $(m\pi/a)^2$ and $(m\pi/b)^2$, where $m\in\mathbb{N}$ is a positive integer~\cite{Ex95}. The presence or absence of a gap at a given position $(m\pi/a)^2$ or $(m\pi/b)^2$ depends on the parameter $u$. A calculation shows~\cite{Ex96} that a gap adjacent to $(m\pi/a)^2$ is present if and only if the integer $m\in\mathbb{N}$ satisfies
\begin{equation}\label{cond a}
\frac{2m}{\pi}\tan\left(\frac{\pi}{2}(m\left(\frac{b}{a}-\left\lfloor m\frac{b}{a}\right\rfloor\right)\right)<\frac{u a}{\pi^2}\,.
\end{equation}
Similarly, a gap adjacent to $(m\pi/b)^2$ is present if and only if
\begin{equation}\label{cond b}
\frac{2m}{\pi}\tan\left(\frac{\pi}{2}(m\left(\frac{a}{b}-\left\lfloor m\frac{a}{b}\right\rfloor\right)\right)<\frac{u b}{\pi^2}\,.
\end{equation}
Conditions~(\ref{cond a}) and (\ref{cond b}) have a slightly different form in the case of attractive $\delta$ potentials (see~\cite{ET17}), but we will not go into details here.

We will demonstrate in Theorem~\ref{Thm.number} below that a certain information about the set of best lower Diophantine approximations of the third kind to $b/a$ and to $a/b$ allows to formulate general statements regarding the gaps in the energy spectrum of the system. For proving the theorem, we will need the following lemma.

\begin{lemma}\label{Lemma monot}
Let $\{y_n\}_{n=1}^\infty$ be a strictly increasing sequence of positive numbers and $x_n\in\left[0,\frac{\pi}{2}\right)$ for all $n\in\mathbb{N}$.
If the sequence $\{y_n x_n\}_{n=1}^\infty$ is strictly decreasing, then the sequence $\{y_n\tan x_n\}_{n=1}^\infty$ is strictly decreasing.
\end{lemma}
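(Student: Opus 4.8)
The plan is to prove the contrapositive-free statement directly by showing that consecutive terms of $\{y_n\tan x_n\}$ decrease, i.e.\ $y_{n+1}\tan x_{n+1}<y_n\tan x_n$ for every $n$, using the hypotheses $y_{n+1}>y_n>0$ and $y_{n+1}x_{n+1}<y_n x_n$. The natural tool is the convexity of $\tan$ on $[0,\pi/2)$: since $\tan$ is a convex, increasing function with $\tan 0=0$, the map $x\mapsto\frac{\tan x}{x}$ is increasing on $(0,\pi/2)$ (this follows because $\tan x=\int_0^x \sec^2 t\,dt$ and $\sec^2$ is increasing, so the average slope $\frac{\tan x}{x}$ grows with $x$). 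Equivalently, $\tan x\geq x$ with the ratio $\tan x/x$ nondecreasing, which I would record as the single key inequality driving the argument.

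First I would dispose of degenerate cases. If $x_n=0$ then $y_n\tan x_n=0$, and the hypothesis $y_{n+1}x_{n+1}<y_n x_n=0$ forces $x_{n+1}<0$, impossible; so once a term vanishes the sequence $\{y_nx_n\}$ cannot continue strictly decreasing through it unless it was already $0$, and I would argue that all relevant $x_n$ are positive (or handle the initial zeros separately, where $y_n\tan x_n=0$ trivially bounds the strictly smaller nonpositive predecessor). Assuming $x_n,x_{n+1}\in(0,\pi/2)$, the main computation is the chain
\begin{equation*}
y_{n+1}\tan x_{n+1}=y_{n+1}x_{n+1}\cdot\frac{\tan x_{n+1}}{x_{n+1}}<y_n x_n\cdot\frac{\tan x_{n+1}}{x_{n+1}}\,.
\end{equation*}
To close this I need $\frac{\tan x_{n+1}}{x_{n+1}}\leq\frac{\tan x_n}{x_n}$, so that the last expression is at most $y_n x_n\cdot\frac{\tan x_n}{x_n}=y_n\tan x_n$, giving the desired strict inequality from the strictness already present.

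The crux, and the step I expect to be the main obstacle, is establishing $\frac{\tan x_{n+1}}{x_{n+1}}\leq\frac{\tan x_n}{x_n}$, because this requires $x_{n+1}\leq x_n$, which is \emph{not} immediate from the hypotheses. Indeed we only know $y_{n+1}x_{n+1}<y_nx_n$ with $y_{n+1}>y_n$; dividing gives $x_{n+1}<\frac{y_n}{y_{n+1}}x_n<x_n$, so in fact $x_{n+1}<x_n$ follows precisely because $y_{n+1}>y_n>0$. Thus the monotonicity of the $y_n$ is exactly what guarantees $x_{n+1}<x_n$, and then monotonicity of $x\mapsto\tan x/x$ yields $\frac{\tan x_{n+1}}{x_{n+1}}\leq\frac{\tan x_n}{x_n}$. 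Combining this with the displayed chain completes the inductive step and hence the proof. The only delicate points are verifying the ratio-monotonicity of $\tan x/x$ cleanly (either by the integral argument above or by checking that $\frac{d}{dx}\frac{\tan x}{x}=\frac{x\sec^2x-\tan x}{x^2}>0$, using $x<\tan x$ combined with $\sec^2 x>1$) and making sure the edge cases with vanishing $x_n$ do not break the strict decrease; both are routine once the structure above is in place.
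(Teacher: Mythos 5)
Your proof is correct and follows essentially the same route as the paper: you derive $x_{n+1}<\frac{y_n}{y_{n+1}}x_n<x_n$ from the hypotheses exactly as the paper does, and your key inequality (monotonicity of $x\mapsto\frac{\tan x}{x}$ on $(0,\pi/2)$) is precisely equivalent to the paper's convexity estimate $x'<x\Rightarrow\tan x'<\frac{x'}{x}\tan x$. Your additional observation that no $x_n$ can vanish (since $x_n=0$ would force $y_{n+1}x_{n+1}<0$) is a small tidiness the paper leaves implicit, but it does not change the argument.
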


\begin{proof}
The assumptions on $\{y_n x_n\}_{n=1}^\infty$ and $\{y_n\}_{n=1}^\infty$ give
\begin{equation}\label{decr x}
x_{n+1}<\frac{y_n x_n}{y_{n+1}}<x_n \qquad\mbox{for all $n\in\mathbb{N}$};
\end{equation}
thus the sequence $\{x_n\}_{n=1}^\infty$ is strictly decreasing as well.

Since $\tan0=0$ and tangent is a strictly convex function on the interval $[0,\pi/2)$, we have
\begin{equation}\label{tg}
x'<x \;\Rightarrow\; \tan x'<\frac{x'}{x}\tan x \qquad \mbox{for all $x,x'\in\left[0;\frac{\pi}{2}\right)$}\,.
\end{equation}
A particular choice $x=x_n$ and $x'=x_{n+1}$ in~(\ref{tg}) together with~(\ref{decr x}) gives
\begin{equation*}
\tan x_{n+1}<\frac{x_{n+1}}{x_n}\tan x_n<\frac{y_n}{y_{n+1}}\tan x_n\,.
\end{equation*}
Hence we obtain $y_{n+1}\tan x_{n+1}<y_n\tan x_n$ for all $n\in\mathbb{N}$.
\end{proof}

\begin{theorem}\label{Thm.number}
Let $a,b>0$. If both $a/b$ and $b/a$ have infinitely many best lower approximations of the 3rd kind, then the number of gaps in the energy spectrum of a periodic rectangular lattice quantum graph with repulsive $\delta$ potentials in the vertices and edge lengths $a$ and $b$ is either infinite or zero.
\end{theorem}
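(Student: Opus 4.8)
The plan is to recast the gap condition into the language of best lower approximations of the third kind and then exploit the monotonicity supplied by Lemma~\ref{Lemma monot}. Set $\alpha=b/a$ and let $\{\cdot\}$ denote the fractional part. For a fixed denominator $q$ the closest rational lower bound to $\alpha$ is $\frac{\lfloor q\alpha\rfloor}{q}$ (cf.\ Observation~\ref{floorceil}), and
\begin{equation*}
q^2\left(\alpha-\frac{\lfloor q\alpha\rfloor}{q}\right)=q\left(q\alpha-\lfloor q\alpha\rfloor\right)=q\{q\alpha\}\,.
\end{equation*}
Hence, directly from Definition~\ref{Def. rth}, $\frac{\lfloor q\alpha\rfloor}{q}$ is a $\BLDA(3)$ to $\alpha$ precisely when $q\{q\alpha\}$ is a strict running minimum of the sequence $\bigl(n\{n\alpha\}\bigr)_{n\geq1}$. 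In particular, if I list the $\BLDA(3)$-denominators of $\alpha=b/a$ as $m_1<m_2<\cdots$, then $m_k\{m_k\alpha\}$ strictly decreases in $k$, and $m_k\{m_k\alpha\}<n\{n\alpha\}$ for every $n<m_k$.

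The key step is to rewrite the left-hand side of the gap condition~(\ref{cond a}). Putting $y_m=m$ and $x_m=\frac{\pi}{2}\{m\,b/a\}\in\left[0,\frac{\pi}{2}\right)$, that left-hand side equals $\frac{2}{\pi}\,y_m\tan x_m$, whereas $y_m x_m=\frac{\pi}{2}\,m\{m\,b/a\}$ is a fixed positive multiple of the running-minimum quantity above. Since $b/a$ has infinitely many $\BLDA(3)$ by hypothesis, $y_{m_k}x_{m_k}$ strictly decreases along the denominators $m_k$, so Lemma~\ref{Lemma monot} yields that $y_{m_k}\tan x_{m_k}$, and hence the left-hand side of~(\ref{cond a}) evaluated at $m=m_k$, strictly decreases as well. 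Consequently, once~(\ref{cond a}) holds at one $m_k$, it holds at every $m_j$ with $j>k$, producing infinitely many gaps adjacent to the points $(m\pi/a)^2$.

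It then remains to show that a single gap forces infinitely many. Suppose a gap is present at some $(m\pi/a)^2$, i.e.\ $m$ satisfies~(\ref{cond a}). If $m$ equals one of the $\BLDA(3)$-denominators $m_k$, the monotonicity just established gives gaps at all $m_j$ with $j\geq k$. If $m$ is not a $\BLDA(3)$-denominator, I pass to the smallest $\BLDA(3)$-denominator $m_k>m$, which exists because there are infinitely many; by the running-minimum property $m_k\{m_k\alpha\}<m\{m\alpha\}$, so applying Lemma~\ref{Lemma monot} to the two-term data $(m,\frac{\pi}{2}\{m\,b/a\})$ and $(m_k,\frac{\pi}{2}\{m_k\,b/a\})$ shows the left-hand side of~(\ref{cond a}) is strictly smaller at $m_k$ than at $m$, so $m_k$ also satisfies~(\ref{cond a}) and the previous case applies. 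Running the identical argument with $a/b$ in place of $b/a$ and condition~(\ref{cond b}) in place of~(\ref{cond a}), the hypothesis that $a/b$ has infinitely many $\BLDA(3)$ shows that any single gap adjacent to some $(m\pi/b)^2$ forces infinitely many gaps in that family. Since every gap is adjacent to a point of one of these two families, the existence of at least one gap implies infinitely many, and the total number of gaps is therefore either zero or infinite.

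The main obstacle is precisely this last reduction: an arbitrarily placed gap need not sit at a $\BLDA(3)$-denominator, and one must move it to such a denominator without increasing the left-hand side of~(\ref{cond a}). This is where Lemma~\ref{Lemma monot} is invoked a second time, now as a two-term comparison, and where the defining \emph{strict running-minimum} property of $\BLDA(3)$ (rather than the convergent characterization of Theorem~\ref{Thm.3}) carries the argument; the remaining steps are the bookkeeping identity $q^2(\alpha-\frac{\lfloor q\alpha\rfloor}{q})=q\{q\alpha\}$ and checking that the hypotheses of Lemma~\ref{Lemma monot} ($\{y_n x_n\}$ strictly decreasing, $x_n\in\left[0,\frac{\pi}{2}\right)$) match the $\BLDA(3)$ data.
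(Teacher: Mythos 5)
Your proof is correct, and its decisive step runs along a genuinely different route than the paper's. The paper also starts from the strictly decreasing sequence $m_n\left(m_n\theta-\lfloor m_n\theta\rfloor\right)$ along the $\BLDA(3)$ denominators and applies Lemma~\ref{Lemma monot} to conclude that the left-hand side of~(\ref{cond a}) decreases strictly along that subsequence; but it then introduces the limit $L=\lim_{n\to\infty}m_n\left(m_n\theta-\lfloor m_n\theta\rfloor\right)$, shows via $\tan x/x\to1$ that the tangent expression tends to the same $L$, and splits on the coupling: if $ua/\pi^2>L$ there are infinitely many solutions of~(\ref{cond a}), while if $ua/\pi^2\leq L$ there are none, the latter using $\tan x\geq x$ together with the running-minimum property to bound the left-hand side below by a quantity exceeding $L$ for \emph{every} $m\in\mathbb{N}$. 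You avoid limits entirely and instead prove a propagation statement: any solution $m$ of~(\ref{cond a}) transfers to the smallest $\BLDA(3)$ denominator $m_k>m$ by a two-term comparison, and then persists along all later denominators, so the solution set is empty or infinite. The two dichotomies are logically equivalent; the paper's route additionally identifies the critical threshold $L$ (which has physical meaning as a critical coupling strength), while yours is more elementary and self-contained. Two small points are worth making explicit. First, Lemma~\ref{Lemma monot} is stated for infinite sequences, but your two-term invocation is legitimate because the lemma's proof is exactly the single-step convexity inequality $x'<x\Rightarrow\tan x'<\frac{x'}{x}\tan x$. Second, the strict inequality $m_k\left(m_k\alpha-\lfloor m_k\alpha\rfloor\right)<m\left(m\alpha-\lfloor m\alpha\rfloor\right)$ from Definition~\ref{Def. rth} requires $\frac{\lfloor m\alpha\rfloor}{m}\neq\frac{\lfloor m_k\alpha\rfloor}{m_k}$ as rational numbers; this holds because a $\BLDA(3)$ with denominator $m_k$ is necessarily in lowest terms (otherwise its reduced form, with denominator $q_0<m_k$, would satisfy $q_0^2\left(\alpha-\frac{\lfloor q_0\alpha\rfloor}{q_0}\right)\leq m_k^{-1}q_0\,m_k\left(\alpha m_k-\lfloor m_k\alpha\rfloor\right)<m_k\left(m_k\alpha-\lfloor m_k\alpha\rfloor\right)$, contradicting the running-minimum property), so its reduced denominator is exactly $m_k>m$ while that of $\frac{\lfloor m\alpha\rfloor}{m}$ divides $m$. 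With these observations supplied, your argument is complete and matches the theorem's conclusion.
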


\begin{proof}
We have to analyze the number of integers $m\in\mathbb{N}$ that satisfy condition~(\ref{cond a}) or condition~(\ref{cond b}). At first we will examine~(\ref{cond a}).

Let $\theta=b/a$ and $\left\{\frac{k_n}{m_n}\,:\,n\in\mathbb{N}_0\right\}$ be the set of all $\BLDA(3)$ to $\theta$. By assumption, this set has infinitely many elements. Observation~\ref{floorceil} gives $k_n=\lfloor m_n\theta\rfloor$. Without loss of generality, we can assume that the denominators form an increasing sequence, $m_0<m_1<m_2<\cdots$. Then the sequence $\left\{m_n(m_n\theta-\lfloor m_n\theta\rfloor)\right\}_{n=1}^\infty$ is strictly decreasing by Definition~\ref{Def. rth}. Moreover, the sequence has nonnegative terms; therefore
\begin{equation}\label{lim L}
\lim_{n\to\infty}m_n(m_n\theta-\lfloor m_n\theta\rfloor)=L\in[0,\infty)\,.
\end{equation}
As a particular consequence of~(\ref{lim L}), we have
\begin{equation}\label{lim0}
\lim_{n\to\infty}(m_n\theta-\lfloor m_n\theta\rfloor)=0\,.
\end{equation}
If we set in Lemma~\ref{Lemma monot}
\begin{equation*}
x_n=\frac{\pi}{2}(m_n\left(\theta-\left\lfloor m_n\theta\right\rfloor\right) \qquad\mbox{and}\qquad y_n=\frac{2m_n}{\pi}\,,
\end{equation*}
we obtain that the sequence
\begin{equation*}\label{seq tan}
\left\{\frac{2m_n}{\pi}\tan\left(\frac{\pi}{2}(m_n\theta-\left\lfloor m_n\theta\right\rfloor)\right)\right\}_{n=1}^\infty
\end{equation*}
is strictly decreasing.
Using~(\ref{lim L}) and~(\ref{lim0}), we find that
\begin{equation}\label{lim LS}
\eqalign{
\lim_{n\to\infty}\frac{2m_n}{\pi}\tan\left(\frac{\pi}{2}(m_n\theta-\left\lfloor m_n\theta\right\rfloor)\right) \\
=
\lim_{n\to\infty}m_n(m_n\theta-\lfloor m_n\theta\rfloor)\cdot\frac{\tan\left(\frac{\pi}{2}(m_n\theta-\left\lfloor m_n\theta\right\rfloor)\right)}{\frac{\pi}{2}(m_n\theta-\left\lfloor m_n\theta\right\rfloor)}
=L\cdot1=L\,.
}
\end{equation}
Now we are ready to analyze the number of integers $m\in\mathbb{N}$ that satisfy~(\ref{cond a}). We have two cases.

1. If $\frac{u a}{\pi^2}>L$, then (\ref{lim LS}) implies the existence of an $n_0$ such that
\begin{equation}\label{cond a lim}
\frac{2m_n}{\pi}\tan\left(\frac{\pi}{2}(m_n\theta-\left\lfloor m_n\theta\right\rfloor)\right)<\frac{u a}{\pi^2}
\end{equation}
for all $n>n_0$. Consequently, there are infinitely many integers $m_n$ satisfying~(\ref{cond a}).

2. Assume that $\frac{u a}{\pi^2}\leq L$. Then for every $m\in\mathbb{N}$, we have
\begin{equation}\label{LS odhad i}
\frac{2m}{\pi}\tan\left(\frac{\pi}{2}(m\theta-\left\lfloor m\theta\right\rfloor)\right)\geq
\frac{2m}{\pi}\cdot\frac{\pi}{2}(m\theta-\left\lfloor m\theta\right\rfloor)=
m(m\theta-\left\lfloor m\theta\right\rfloor)\,.
\end{equation}
If we take an arbitrary $\BLDA(3)$ of the form $\left\lfloor m_n\theta\right\rfloor/m_n$ with property $m_n\geq m$, then Definition~\ref{Def. rth} gives
\begin{equation}\label{LS odhad ii}
m_n(m_n\theta-\left\lfloor m_n\theta\right\rfloor)\leq m(m\theta-\left\lfloor m\theta\right\rfloor)\,.
\end{equation}
From~(\ref{LS odhad i}), (\ref{LS odhad ii}) and from the fact that sequence $\left\{m_n(m_n\theta-\lfloor m_n\theta\rfloor)\right\}_{n=1}^\infty$ strictly decreases to $L$ we get
\begin{equation*}
\frac{2m}{\pi}\tan\left(\frac{\pi}{2}(m\theta-\left\lfloor m\theta\right\rfloor)\right)>L \qquad \mbox{for all $m\in\mathbb{N}$}.
\end{equation*}
In other words, there exists no $m\in\mathbb{N}$ obeying~(\ref{cond a}).

In the same way one would analyze condition~(\ref{cond b}). The assumption that $a/b$ has infinitely many $\BLDA(3)$ leads to the conclusion that the number of solutions of~(\ref{cond b}) is either infinite or zero.

To sum up, the number of integers $m$ that satisfy at least one of the conditions~(\ref{cond a}), (\ref{cond b}) is either infinite or zero.
\end{proof}

This result is closely related to the existence of so-called Bethe--Sommerfeld quantum graphs. Let us finish this section with an important comment on this interesting problem.

The Bethe--Sommerfeld conjecture of 1933~\cite{BS33} states that any quantum system that is periodic in two or more directions has finitely many gaps in its energy spectrum. The conjecture was proved for several classes of systems (see e.g.~\cite{Sk79}), but turned out to be invalid for quantum graphs~\cite{BK13}. All examples of periodic quantum graphs studied in the literature until 2017 led to energy spectra with either infinitely many gaps, or no gaps at all. The first examples of quantum graphs that obey the conjecture in a nontrivial manner, i.e., that have a \emph{finite nonzero} number of gaps in their energy spectra, appeared in~\cite{ET17} and \cite{Tu18}. In accord with~\cite{ET17} let us call a quantum graph having a finite nonzero number of gaps in its energy spectrum to be of the \emph{Bethe--Sommerfeld type}. In view of Theorem~\ref{Thm.number}, we conclude that if the ratios of edge lengths $b/a$ and $a/b$ have infinitely many best lower approximations, then the periodic rectangular lattice graph in question cannot be of the Bethe--Sommerfeld type, regardless of the strength of the repulsive $\delta$ potential in the vertices.

Now let us recall Proposition~\ref{Prop.Lebesgue}, which says that the set of numbers $\alpha$ having infinitely many $\BLDA(3)$ has full Lebesgue measure. Hence we obtain immediately that the set of numbers $\alpha$ such that both $\alpha$ and $1/\alpha$ have infinitely many $\BLDA(3)$ has full Lebesgue measure as well. When we restrict our attention to the family of periodic rectangular graphs with repulsive $\delta$-type potentials in the vertices, we can say in view of Theorem~\ref{Thm.number} that the Bethe--Sommerfeld graphs form a subset of zero Lebesgue measure. For almost all ratios $a/b$ of edge lengths, the quantum graph in question does not belong to the Bethe--Sommerfeld class. This explains why it was so difficult and longstanding problem to prove the existence of Bethe--Sommerfeld graphs and, in particular, to find an explicit example.

\section{Conclusions}\label{Conclusions}

Let us compare the theory of the best one--sided (lower or upper) Diophantine approximations of the $\ell$-th kind ($\ell\in\mathbb{N}$)  with the theory of the classical best Diophantine approximation of the $\ell$-th kind. They have several differences and also some common features. The common property is that the both theories make use of convergents and semiconvergents as a main tool. Also metric properties are very similar. On the other hand, the structure of the sets of best lower and upper Diophantine approximations differs from the sets known in the classical theory. A surprising result was found for approximations of the first and second kind, which form mutually different sets in the classical theory, but in the theory of one--sided approximations they coincide (Theorem~\ref{Thm.12}).

An important aspect concerns applications and history. Classical ``double--sided'' best approximations have been developed and widely used in practical problems for centuries. Best lower and upper approximations, by contrast, do not have many known applications so far. In this paper, we demonstrated their immediate connection to quantum mechanics on graphs (Section~\ref{Application}), which originally served as a main motivation for our research. Our results help to understand the intricacy of Bethe--Sommerfeld graphs, the existence of which posed an open problem in mathematical physics for decades. We are certain that other applications of best one--sided approximations in physics and mathematics will arise in the future.

The research opens many interesting new questions. For instance, what will be the analog of the Lagrange or Markoff sequences? Will it be possible to obtain a one--sided version of Markoff chains? And if one constructs an analog of functions which substitute  Lagrange numbers and which are described in \cite{Ha15} or \cite{Ha16}, what form will they have? All of this could give rise to a nice theory.

\ack
The authors thank \v{S}t\v{e}p\'an Starosta (Czech Technical University in Prague) for valuable comments and discussions. The research was supported by the Czech Science Foundation (GA\v{C}R) within the project 17-01706S.

\section*{References}

\end{document}